\newcommand{\ds}{\displaystyle}
\newcommand{\NN}{\mathbb N}
\newcommand{\CC}{\mathbb C}
\newcommand{\RR}{\mathbb R}
\newcommand{\ZZ}{\mathbb Z}
\newcommand{\DD}{\mathcal D}
\newcommand{\SSS}{\mathcal S}
\theoremstyle{plain}
\newtheorem{theorem}{Theorem}[section]
\newtheorem{proposition}[theorem]{Proposition}
\newtheorem{corollary}[theorem]{Corollary}
\theoremstyle{remark}
\newtheorem{remark}[theorem]{Remark}
\theoremstyle{definition}
\newtheorem{definition}[theorem]{Definition}
\numberwithin{equation}{section}
\newcommand{\beq}{\begin{eqnarray}}
\newcommand{\eeq}{\end{eqnarray}}
\newcommand{\beqs}{\begin{eqnarray*}}
\newcommand{\eeqs}{\end{eqnarray*}}
\newcommand{\supp}{\mathrm{supp\,}}
\begin{document}

\title[Translation-modulation invariant Banach spaces]
{Translation-modulation invariant Banach spaces of ultradistributions}

\author[P. Dimovski]{Pavel Dimovski}
\address{P. Dimovski, Faculty of Technology and Metallurgy, University Ss. Cyril and Methodius, Ruger Boskovic 16, 1000 Skopje, Macedonia}
\email{dimovski.pavel@gmail.com}

\author[S. Pilipovi\'{c}]{Stevan Pilipovi\'{c}}
\thanks{S. Pilipovi\'{c} gratefully acknowledges support by the Project 174024 of the Serbian Ministry for Education, Science and  Technological Development}
\address{S. Pilipovi\'{c}, Department of Mathematics and Informatics, University of Novi Sad, Trg Dositeja Obradovi\'ca 4, 21000 Novi Sad, Serbia}
\email {stevan.pilipovic@dmi.uns.ac.rs}

\author[B. Prangoski]{Bojan Prangoski}
\address{B. Prangoski, Faculty of Mechanical Engineering, University Ss. Cyril and Methodius, Karpos II bb, 1000 Skopje, Macedonia}
\email{bprangoski@yahoo.com}

\author[J. Vindas]{Jasson Vindas}
\thanks{J. Vindas  gratefully acknowledges support by Ghent University, through the BOF-grants 01J11615 and 01N01014.}
\address{J. Vindas, Department of Mathematics, Ghent University, Krijgslaan 281, 9000 Gent, Belgium}
\email{jasson.vindas@UGent.be}

\subjclass[2010]{Primary 46F05. Secondary 46E10; 46F12; 46H25; 81S30}
\keywords{Ultradistributions; Modulation spaces; Gelfand-Shilov spaces; Translation-invariant Banach space; Translation-modulation invariant Banach spaces of ultradistributions}

\maketitle

\begin{abstract}
We introduce and study a new class of translation-modulation invariant Banach spaces of ultradistributions.
These spaces show stability under Fourier transform and tensor products; furthermore, they have a natural Banach convolution module structure over a certain associated Beurling algebra, as well as a Banach multiplication module structure over an associated Wiener-Beurling algebra. We also investigate a new class of modulation spaces, the Banach spaces of ultradistributions $\mathcal{M}^F$ on $\mathbb{R}^{d}$, associated to translation-modulation invariant Banach spaces of ultradistributions $F$  on $\mathbb{R}^{2d}$.
\end{abstract}
\maketitle

\section{Introduction}

 In this article we introduce and study a class of Banach spaces of ultradistributions that are invariant under translation and modulation operators. We shall also use these Banach spaces to define new classes of modulation spaces.

The modulation spaces were introduced by H. Feichtinger \cite{F0,F} and provide a natural framework to study many aspects of time-frequency analysis. Their properties were investigated in detail by him and K. Gr\"{o}chenig in \cite{F1, FG88, fg11,fg22, F90}. See the monograph \cite{gr} for an overview of results and applications. Modulation spaces have been considered by many authors both in the setting of distributions and ultradistributions \cite{C-P-R-T,FG,GZ1,PT,PilT,Toft2012}, and they have shown useful in the study of pseudo-differential and localization operators. The Banach spaces we introduce here allow us to consider natural generalizations of the $M^{p,q}_{\omega}(\RR^{d})$ spaces and further refine the scale of modulation spaces. Our new modulation spaces turn out to be larger or smaller than other extreme spaces usually considered in literature, but they still enjoy good translation-modulation invariance properties.

We define the class of so-called translation-modulation invariant Banach spaces of ultradistributions (TMIB) in Section \ref{TMI}. They are Banach spaces $E$ that satisfy the dense and continuous inclusions $\mathcal{S}^*_{\dagger}(\mathbb{R}^d)\hookrightarrow E\hookrightarrow \mathcal{S}'^*_{\dagger}(\RR^d)$ with respect to Gelfand-Shilov spaces, are invariant under the translations and modulations, and the operator norms of translation and modulation operators satisfy certain growth bounds with respect to the regularity of the Gelfand-Shilov space.  We discuss in this section stability properties of the TMIB of ultradistributions and their duals (DTMIB of ultradistributions) under topological tensor products; we also show that they have a natural Banach convolution module structure over a certain associated Beurling algebra, as well as a Banach multiplication module structure over an associated Wiener-Beurling algebra. We point out that the Banach space $E$ is not assumed to be a solid Banach space in the sense of \cite{f1979}; indeed, the elements of $E$ may not be $L^{1}_{loc}$ functions and actually $E$ needs not even contain non-trivial compactly supported functions. We also mention that our considerations apply to spaces of both quasianalytic and non-quasianalytic type. Therefore, in the quasianalytic case, our TMIB of ultradistributions are not necessarily so-called Banach spaces in `standard situation' in the sense of W. Braun and H. Feichtinger \cite{BF,F84} either, because their associated Wiener-Beurling algebras may consist of quasianalytic functions.

 We study in Section \ref{MS} a generalization of the modulation spaces, the space $\mathcal{M}^F$ consisting of ultradistributions on $\mathbb{R}^{d}$ whose short-time Fourier transforms belong to a given $F$ that is assumed to be either a TMIB or a DTMIB of ultradistributions on $\mathbb{R}^{2d}$. The main difference between our setting in this paper and that of \cite{fg11} is the absence of solidity assumptions on $F$ and of localization arguments in the quasianalytic case. Despite the general character of $F$, we show here that the Banach space $\mathcal{M}^{F}$ enjoys most of the useful properties of the standard modulation spaces $M^{p,q}_{\omega}(\mathbb{R}^{d})= \mathcal{M}^{L^{p,q}_{\omega}(\mathbb{R}^{2d})}$. Our results on (completed) tensor products from Section \ref{TMI} provide a way to generate a number of interesting instances of TMIB and DTMIB of ultradistributions on $\mathbb{R}^{2d}$; their associated modulation spaces, in general, differ from $M^{p,q}_{\omega}(\mathbb{R}^{d}).$ As an example, we discuss in Section \ref{ex} the case of $\mathcal{M}^{L^2(\RR^d)\hat{\otimes}_{\pi} L^2(\RR^d)}$, which is a proper subspace of $L^{2}(\mathbb{R}^{d})$; here $L^2(\RR^d)\hat{\otimes}_{\pi} L^2(\RR^d)$ stands for the completion of  $L^2(\RR^d)\otimes L^2(\RR^d)$ with respect to the $\pi$-topology \cite{ryan,Treves}.

\section{Notation and Preliminaries}\label{notation}
\label{notation}

Let $M_p$ and $A_p$, $p\in\NN$, be two weight sequences of positive numbers such that $M_0=M_1=A_0=A_1=1$. Throughout the article we assume that both sequences satisfy the following three conditions:

\smallskip

\indent $(M.1)$ $M_{p}^{2} \leq M_{p-1} M_{p+1}, \; \; p \in\ZZ_+$;\\
\indent $(M.2)$ $\ds M_{p} \leq c_0H^{p} \min_{0\leq q\leq p} \{M_{p-q} M_{q}\}$, $p,q\in \NN$, for some $c_0,H\geq1$;\\
\indent $(M.6)$ $p!\subset M_p$; i.e., there exist $c_0,L_0\geq1$
such that $p!\leq c_0 L_0^p M_p$, $p\in\NN$.

\smallskip

The meaning of these standard conditions in the theory of ultradifferentiable functions and ultradistributions is very well explained in \cite{Komatsu1}. Without any loss of generality, we can assume the constants $c_0$, $H$ and $L_0$ that appear in these conditions are the same for both sequences $M_p$ and $A_p$.
The special cases of the Gevrey sequences $M_p=p!^{\lambda}$ and $A_p=p!^{\tau}$, for $\lambda,\tau\geq 1$, are classical examples of weight sequences satisfying our assumptions. Observe also that $M_p$ and $A_p$ may or may not be quasianalytic \cite{Komatsu1}; although such a distinction plays basically no role in this paper, we refer to \cite{beurling1938,D,Ho1,Komatsu1,RS} for properties of quasianalytic and non-quasianalytic functions.

We denote as $M(\cdot)$ and $A(\cdot)$ the associated functions of $M_p$ and $A_p$, namely, $M(\rho)=\sup_{p\in\NN}\ln_+(\rho^p/M_p)$, for $\rho\geq 0$,  and $A(\rho)$ likewise defined. They are non-negative, continuous, non-decreasing functions that vanish for sufficiently small $\rho$ and grow more rapidly than $\ln \rho^n$ for any $n\in\ZZ_+$ as $\rho\to\infty$ (see \cite{Komatsu1}). For example, if $M_p=p!^{\lambda}$, one has $M(\rho)\asymp \rho^{1/\lambda}$. Given $h>0$, we often write $M_{h}(\rho)=M(h\rho)$ and $A_{h}(\rho)=A(h\rho)$.

Let $h>0$. We denote as $\SSS^{M_p,h}_{A_p,h}$ the Banach space of all smooth functions $\varphi \in C^{\infty}(\RR^d)$ for which the norm
\beqs
\sigma^{M_p,h}_{A_p,h}(\varphi)=\sup_{\alpha\in\NN^d} \frac{h^{|\alpha|}\left\|e^{A
_{h}}D^{\alpha}\varphi\right\|_{L^{\infty}(\RR^d)}} {M_{\alpha}}
\eeqs
is finite. One easily verifies that for $h_1<h_2$ the canonical inclusion $\SSS^{M_p,h_2}_{A_p,h_2}\rightarrow\SSS^{M_p,h_1}_{A_p,h_1}$ is compact.
As locally convex spaces (from now on abbreviated as l.c.s.) we define
$$\SSS^{(M_p)}_{(A_p)}(\RR^d)=\lim_{\substack{\longleftarrow\\ h\rightarrow \infty}} \SSS^{M_p,h}_{A_p,h} \quad\mbox{ and }\quad
\SSS^{\{M_p\}}_{\{A_p\}}(\RR^d)=\lim_{\substack{\longrightarrow\\ h\rightarrow 0^{+}}} \SSS^{M_p,h}_{A_p,h}.$$
The space $\SSS^{(M_p)}_{(A_p)}(\RR^d)$ is an $(FS)$-space (Fr\'echet-Schwartz space), while $\SSS^{\{M_p\}}_{\{A_p\}}(\RR^d)$ is a $(DFS)$-space (dual Fr\'echet-Schwartz space), in particular they are Montel spaces; for the abstract theory concerning $(FS)$ and $(DFS)$-spaces we refer to \cite[Appendix A]{morimoto}.

For $h>0$ and $k\in\NN$, we also define the Banach spaces
\begin{align*}
\mathcal{S}_{A_p,h}^{\emptyset,k}&=\{\varphi\in C^k(\RR^d)|\,\sigma_{A_p,h}^{\emptyset,k}(\varphi)<\infty\},\\
\mathcal{S}_{\emptyset,k}^{M_p,h}&=\{\varphi\in C^\infty(\RR^d)|\,\sigma_{\emptyset,k}^{M_p,h}(\varphi)<\infty\},\\
\mathcal{S}_{\emptyset,h}^{\emptyset,k}&=\{\varphi\in C^k(\RR^d)|\,\sigma_{\emptyset,h}^{\emptyset,k}(\varphi)<\infty\},\\
\end{align*}
where the corresponding norms are
\[
\sigma_{A_p,h}^{\emptyset,k}(\varphi)=\sup_{|\alpha|\leq k}\|e^{A_{h}} D^\alpha \varphi\|_{L^\infty(\RR^d)}, \quad
\sigma_{\emptyset,k}^{M_p,h}(\varphi)=\sup_{\alpha\in\mathbb{N}^d} \frac{h^{|\alpha|}\|(1+|\cdot|)^k D^\alpha\varphi\|_{L^\infty(\RR^d)}}{M_\alpha},
\]
and
$\sigma_{\emptyset,h}^{\emptyset,k}(\varphi)=\sup_{|\alpha|\leq k}\|(1+|\cdot|)^k D^\alpha\varphi\|_{L^\infty(\RR^d)}. $
Clearly, the Schwartz space is
$\ds\mathcal{S}(\mathbb{R}^d)=\lim_{\substack{\longleftarrow\\ k\rightarrow \infty}} \mathcal{S}_{\emptyset,k}^{\emptyset,k}$ and, similarly,  we define  the following $(FS)$-spaces of Beurling type,
\begin{equation*}
\mathcal{S}_{(A_p)}^\emptyset (\RR^d)=\lim_{\substack{\longleftarrow\\ k\rightarrow \infty}} \mathcal{S}_{A_p,k}^{\emptyset,k},  \quad
\mathcal{S}_{\emptyset}^{(M_p)}(\RR^d) = \lim_{\substack{\longleftarrow\\ k\rightarrow \infty}} \mathcal{S}_{\emptyset,k}^{M_p,k}.
\end{equation*}
Note that $\mathcal{S}_{(A_p)}^\emptyset (\RR^d)$ is always of non-quasianalytic nature, that is, it contains the Schwartz space $\mathcal{D}(\mathbb{R}^{d})$ as a dense subspace.

Subsequently, we will use the common notation $\SSS^*_{\dagger}(\RR^d)$ for the five spaces $\SSS^{(M_p)}_{(A_p)}(\RR^d)$, $\SSS^{\{M_p\}}_{\{A_p\}}(\RR^d)$, $\SSS^{(M_p)}_{\emptyset}(\RR^d)$, $\SSS^{\emptyset}_{(A_p)}(\RR^d)$ and $\SSS(\RR^d)=\SSS^{\emptyset}_{\emptyset}(\RR^d)$. We endow the dual space $\SSS'^*_{\dag}(\RR^d)$ with the strong dual topology.  We emphasize again that if $\ast=\emptyset$ or $\dagger=\emptyset$, only the Beurling case is considered. Furthermore, for the sake of convenience, we shall always follow the ensuing \emph{convention}. If $\ast=\emptyset$, then the function $M_h$, $h>0$, stands for $M_h(\rho)=h\ln(1+|\rho|)$, and we still also call $M(\rho)=\ln(1+|\rho|)$ the associated function of this case. This convention also applies to $\dagger=\emptyset$ and in such case we write $A_h(\rho)=h\ln(1+|\rho|)$. We also mention that some of the proofs below will only be given in the cases $\ast=(M_p),\{M_p\}$ and $\dagger=(A_p),\{A_p\}$; the treatments of the other three cases only require small adjustments and are therefore omitted.

It is very important to note that $\SSS^*_{\dagger}(\RR^d)$ (and hence its strong dual $\SSS'^*_{\dagger}(\RR^d)$, cf. \cite[p. Proposition 50.6, p. 523]{Treves}) is nuclear in all cases considered in this article.
In fact, this observation is fundamental because it yields the validity of kernel theorems via standard arguments (cf., e.g., \cite[Subsection 2.3]{ppv} and \cite[Proposition 2]{BojanP}). The nuclearity of $\SSS(\RR^d)$ is well known. That of $\SSS^{(M_p)}_{(A_p)}(\RR^d)$ and $\SSS^{\{M_p\}}_{\{A_p\}}(\RR^d)$ is shown in \cite[Proposition 2.10]{ppv}; the same method applied in the quoted paper gives the nuclearity of $\SSS^{\emptyset}_{(A_p)}(\RR^d)$, whence we also obtain that $\SSS^{(M_p)}_{\emptyset}(\RR^d)$ is nuclear because it is isomorphic (as l.c.s.) to $\SSS^{\emptyset}_{(M_p)}(\RR^d)$ via Fourier transform. For future reference, we collect these facts in the next proposition.

\begin{proposition}\label{ktlshv135}
The spaces $\SSS^*_{\dag}(\RR^d)$ and $\SSS'^*_{\dag}(\RR^d)$ are nuclear. Furthermore, the following canonical isomorphisms of l.c.s. hold:
\beqs
\SSS^*_{\dag}(\RR^{d_1+d_2})\cong\SSS^*_{\dag}(\RR^{d_1})\hat{\otimes} \SSS^*_{\dag}(\RR^{d_2})\cong \mathcal{L}_b(\SSS'^*_{\dag}(\RR^{d_1}),\SSS^*_{\dag}(\RR^{d_2})),\\
\SSS'^*_{\dag}(\RR^{d_1+d_2})\cong\SSS'^*_{\dag}(\RR^{d_1})\hat{\otimes} \SSS'^*_{\dag}(\RR^{d_2})\cong \mathcal{L}_b(\SSS^*_{\dag}(\RR^{d_1}),\SSS'^*_{\dag}(\RR^{d_2})).
\eeqs
\end{proposition}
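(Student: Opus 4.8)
\emph{Proof plan.} The plan is to split the statement into the nuclearity assertions and the two chains of canonical isomorphisms, and to reduce each piece to standard permanence properties of nuclear $(FS)$- and $(DFS)$-spaces, together with the kernel theorem alluded to in the discussion preceding the proposition.

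First I would dispose of nuclearity. The nuclearity of each of the five spaces $\SSS^*_{\dagger}(\RR^d)$ has already been recorded above (via \cite[Proposition 2.10]{ppv} in the Beurling and Roumieu cases, the same method for $\SSS^{\emptyset}_{(A_p)}(\RR^d)$, and the Fourier isomorphism $\SSS^{(M_p)}_{\emptyset}(\RR^d)\cong\SSS^{\emptyset}_{(M_p)}(\RR^d)$ for the remaining one). Since each $\SSS^*_{\dagger}(\RR^d)$ is either an $(FS)$- or a $(DFS)$-space, it is in particular reflexive (indeed Montel), so its strong dual $\SSS'^*_{\dagger}(\RR^d)$ is respectively a $(DFS)$- or an $(FS)$-space. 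Nuclearity transfers to the strong dual by the standard permanence property that the strong dual of a nuclear Fréchet space is nuclear, and dually for nuclear $(DF)$-spaces (cf. \cite[Chapter 50]{Treves}); this settles the first sentence.

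For the tensor-product isomorphisms I would exploit that nuclearity makes the projective and injective tensor topologies coincide, so that $\hat{\otimes}$ is unambiguous. Concretely, the bilinear map $(\varphi,\psi)\mapsto\varphi\otimes\psi$, where $(\varphi\otimes\psi)(x,y)=\varphi(x)\psi(y)$, carries $\SSS^*_{\dagger}(\RR^{d_1})\otimes\SSS^*_{\dagger}(\RR^{d_2})$ into $\SSS^*_{\dagger}(\RR^{d_1+d_2})$; I would check that its range is dense and that the topology induced from $\SSS^*_{\dagger}(\RR^{d_1+d_2})$ coincides with the $\pi$-topology, and then pass to completions to obtain $\SSS^*_{\dagger}(\RR^{d_1+d_2})\cong\SSS^*_{\dagger}(\RR^{d_1})\hat{\otimes}\SSS^*_{\dagger}(\RR^{d_2})$. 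Continuity of the embedding rests on a submultiplicative estimate for the defining seminorms on product functions, which follows from $(M.2)$ (controlling $M_{\alpha+\beta}$ by $M_\alpha M_\beta$ up to a geometric factor) and the subadditivity of the associated functions $A(\cdot)$ and $M(\cdot)$, with the limit in $h$ (projective as $h\to\infty$ in the Beurling case, inductive as $h\to 0$ in the Roumieu case) absorbed into the constants. The dual isomorphism $\SSS'^*_{\dagger}(\RR^{d_1+d_2})\cong\SSS'^*_{\dagger}(\RR^{d_1})\hat{\otimes}\SSS'^*_{\dagger}(\RR^{d_2})$ then follows by taking strong duals and using $(X\hat{\otimes}Y)'_b\cong X'_b\hat{\otimes}Y'_b$, valid for nuclear $(FS)$/$(DFS)$-spaces. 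Finally, the operator representations are the realization of the completed tensor product as a Schwartz $\epsilon$-product, $X\hat{\otimes}Y\cong X\,\epsilon\,Y\cong\mathcal{L}_b(X'_b,Y)$ (\cite[Chapter 50]{Treves}), applied with $(X,Y)=(\SSS^*_{\dagger}(\RR^{d_1}),\SSS^*_{\dagger}(\RR^{d_2}))$ for the first line and $(X,Y)=(\SSS'^*_{\dagger}(\RR^{d_1}),\SSS'^*_{\dagger}(\RR^{d_2}))$ for the second, where reflexivity supplies $(\SSS'^*_{\dagger})'_b\cong\SSS^*_{\dagger}$.

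The genuinely technical step, and the one I expect to be the main obstacle, is the identification of the $\pi$-topology with the subspace topology on the algebraic tensor product inside $\SSS^*_{\dagger}(\RR^{d_1+d_2})$: density can be obtained from the totality of tensor products of one-dimensional Hermite functions (which lie in all five spaces, as $(M.6)$ places $M_p$ and $A_p$ above the Gevrey-$\tfrac12$ threshold), but matching the two topologies requires the two-sided seminorm estimates above and careful bookkeeping of the inductive/projective limits in the Roumieu versus Beurling cases. Everything else—nuclearity of the duals and the passage from the completed tensor product to the operator representation—is a direct appeal to the permanence theory of nuclear spaces and is routine once nuclearity is in hand. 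Accordingly, since the preceding discussion already notes that nuclearity ``yields the validity of kernel theorems via standard arguments,'' I would present this identification by invoking those standard arguments (\cite[Subsection 2.3]{ppv}, \cite[Proposition 2]{BojanP}) rather than reproducing the seminorm estimates in full.
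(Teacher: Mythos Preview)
Your proposal is correct and follows essentially the same route as the paper. In fact, the paper does not give a formal proof of this proposition at all: it is stated as a summary of facts established in the paragraph immediately preceding it, where nuclearity is obtained from \cite[Proposition 2.10]{ppv} (plus the Fourier isomorphism for $\SSS^{(M_p)}_{\emptyset}$), nuclearity of the duals is attributed to \cite[Proposition 50.6]{Treves}, and the tensor/kernel isomorphisms are declared to follow ``via standard arguments'' with references to \cite[Subsection 2.3]{ppv} and \cite[Proposition 2]{BojanP}. Your write-up is therefore a more detailed unpacking of exactly the same argument the paper invokes by citation.
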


The completed tensor products above are taken with respect to either the $\epsilon$- or the $\pi$-topology, which amounts to the same in view of nuclearity \cite{Treves}.

We shall also employ the following notation. We fix constants in the Fourier transform as $\hat{f}(\xi)=\mathcal{F}f(\xi)=\int_{\RR^d} e^{-2\pi ix\cdot \xi}f(x)dx$, $f\in L^1(\RR^d)$. The Fourier transform is a topological isomorphism between $\SSS^*_{\dag}(\RR^d)$ and $\SSS^{\dag}_*(\RR^d)$, and thus between the ultradistribution spaces $\SSS'^*_{\dag}(\RR^d)$ and $\SSS'^{\dag}_*(\RR^d)$ by duality. Denote as $T_xf=f(\:\cdot\:-x)$ and $M_\xi f=e^{ 2\pi i\langle\xi,\cdot\rangle} f(\cdot)$ the operators of translation and modulation, respectively. Obviously, they act continuously on $\SSS^*_{\dag}(\RR^d)$ and, by duality, on $\SSS'^*_{\dag}(\RR^d)$ as well. For $f\in \mathcal{S}'^{*}_\dag (\mathbb{R}^d)$ and $x,\xi\in\RR^d$, we have $T_x M_\xi f=e^{-2\pi ix\cdot \xi} M_\xi T_x f$, $\mathcal{F}T_x f=M_{-x}\mathcal{F}f$ and $\mathcal{F}M_\xi f=T_{\xi} \mathcal{F}f$. Given a weight function $\eta:\mathbb{R}^{d}\to (0,\infty)$ and $1\leq p\leq \infty$, we denote as $L^p_{\eta}$ the weighted $L^p$ space of measurable functions $g$ such that $\|g\|_{L^{p}_{\eta}}:=\|\eta g\|_{p}<\infty$.

Finally, given a Banach space of ultradistributions $X\subseteq \SSS'^*_{\dag}(\RR^d)$, we define its associated Fourier space as the Banach space $\mathcal{F}X=\mathcal{F}(X) \subseteq \SSS'^{\dag}_{*}(\RR^d)$ with norm $\|\hat{f}\|_{\mathcal{F}X}=\|f\|_{X}$.

\section{Translation-modulation invariant  Banach spaces of ultradistributions}\label{TMI}
 We introduce in this section a general class of Banach spaces of ultradistributions that are invariant under the translation and modulation operators. An arrow $X\hookrightarrow Y$ always means continuous and dense inclusion between two l.c.s. In the sequel, if a separate treatment is needed, we first state assertions for the Beurling cases of $\ast$ and $\dagger$, followed by the Roumieu one in parenthesis.

\begin{definition}\label{defmod}
A Banach space $E$ is said to be a translation-modulation invariant Banach space of ultradistributions (in short: TMIB) of class $*-\dagger$ if it satisfies the following three conditions:
\begin{itemize}
    \item[(a)] The continuous and dense inclusions $\mathcal{S}^*_{\dagger}(\mathbb{R}^d)\hookrightarrow E\hookrightarrow \mathcal{S}'^*_{\dagger}(\RR^d)$ hold.
    \item[(b)] $T_{x}(E)\subseteq E$ and $M_\xi (E)\subseteq E$ for all $x,\xi\in\mathbb{R}^d$.
\item [(c)] There exist $\tau,C>0$ (for every $\tau>0$ there exists $C_{\tau}>0$), such that\footnote{Applying the closed graph theorem, the conditions (a) and (b) readily yield that $T_x\in\mathcal{L}(E)$ and $M_\xi\in\mathcal{L}(E)$ for every $x,\xi\in\RR^d$, see the proof of \cite[Lemma 3.1]{TIBU}. This justifies the fact that we can take their operator norms in \eqref{omega}.}
    \begin{equation}\label{omega}
   \omega_{E}(x):= \|T_{x}\|_{\mathcal{L}(E)}\leq C e^{A_{\tau}(|x|)} \quad  \mbox{and} \quad \nu_{E}(\xi) := \|M_{-\xi}\|_{\mathcal{L}(E)}\leq C e^{M_{\tau}(|\xi|)}.
   \end{equation}
\end{itemize}
The functions $\omega_{E}:\mathbb{R}^{d}\to (0,\infty)$ and $\nu_{E}:\mathbb{R}^{d}\to (0,\infty)$ defined in \eqref{omega} are called the weight functions of the translation and modulation groups of $E$, respectively (in short: its weight functions).
\end{definition}

\smallskip

In the case $\ast=\emptyset$, we also call a Banach space $E$ fulfilling the conditions of Definition \ref{defmod} a translation-modulation invariant Banach space of distributions. Note that in this case the bound in \eqref{omega} for the weight function of the modulation group reads $\nu_E(\xi)\leq C(1+|\xi|)^{\tau}$, for some $\tau,C>0$. If $\dagger=\emptyset$, the same comment applies to the bound for $\omega_E$.

We mention that Definition \ref{defmod} is intrinsically related to the notion of translation invariant Banach spaces of ultradistributions (TIB) studied by the authors in \cite{DPPV,DPV,DPV2015b,TIBU}. In fact, one readily verifies that $E$ is a TMIB of class $*-\dagger$ if and only if $E$ is a TIB of class $*-\dagger$ and $\mathcal{F}E$ is a TIB of class $\dagger-*$ (the latter readily follows from the identity $\mathcal{F}M_\xi f=T_{\xi} \mathcal{F}f$ and the definition of the norm on the Banach space $\mathcal{F}E$). Consequently,  $\mathcal{F}E$ is a TMIB of class $\dagger-*$ if $E$ is a TMIB of class $*-\dagger$. Note also that
\[\omega_{\mathcal{F}E}(x)=\check{\nu}_{E}(x)=\nu_{E}(-x)\quad  \mbox{and} \quad \nu_{\mathcal{F}E}(\xi)=\omega_{E}(\xi).
\]
Definition \ref{defmod} is similar to but not the same as the notion of Banach spaces in standard situation, introduced by Braun and Feichtinger \cite[Definition 3.1, p. 186]{BF}, see Remark \ref{TMrk6} below for a comparison.

We can use the results from \cite[Section 3]{TIBU} to deduce a number of important properties of a TMIB $E$. We start by pointing out that $E$ must be separable, as follows from property (a) of Definition
\ref{defmod}, and, by using properties (a) and (b), one obtains that, for any fixed $g\in E$,
\begin{equation}
\label{TMeq3.2}
x\mapsto T_xg \quad \mbox{and}
\quad \xi\mapsto M_{\xi}g
\end{equation}
are continuous mappings from $\RR^d$ to $E$; in other words, the translation and modulation operators on $E$ form both $C_0$-groups.

It is also clear that the weight functions $\omega_{E}$ and $\nu_{E}$ of a TMIB $E$ are both measurable (cf. \cite[p. 149]{TIBU}),  $\omega_{E}(0)=\nu_{E}(0)=1$, and their logarithms are subadditive functions. Using these properties of the weight functions, we can induce two natural Banach module structures on $E$ with respect to convolution and multiplication. Denote as $L^1_{\omega_E}$ the Beurling algebra on $\RR^{d}$ associated to the weight function of the translation group of $E$. We also employ the associated Fourier space $\mathcal{F}L^{1}_{\nu_{E}}$ corresponding to the weight function of the modulation group of $E$. Note that the multiplication of elements of $\mathcal{F}L^{1}_{\nu_{E}}$ can be defined via Fourier transform and the convolution operation on $L^{1}_{\nu_{E}}$, so that $\mathcal{F}L^{1}_{\nu_{E}}$ becomes a Banach algebra under this multiplication. We refer to it as the Wiener-Beurling algebra \cite{beurling1938,LST} with weight function $\nu_{E}$ and use the notation $A_{\nu_{E}}=\mathcal{F}L^{1}_{\nu_{E}}$. Clearly, when $\nu_{E}\geq 1$, the Wiener-Beurling algebra $A_{\nu_{E}}$ consists of continuous functions and its multiplication operation coincides with ordinary pointwise multiplication. Our results from \cite{TIBU} yield that $E$ is simultaneously a Banach convolution module over the Beurling algebra $L^{1}_{\omega_{E}}$ and a Banach multiplication module over the Wiener-Beurling algebra $A_{\nu_{E}}$, as follows:

\begin{proposition}
\label{propdoublemodule} Let $E$ be a TMIB of class $*-\dagger$. The convolution $\ast: \mathcal{S}^*_{\dagger}(\mathbb{R}^d)\times \mathcal{S}^*_{\dagger}(\mathbb{R}^d)\to \mathcal{S}^*_{\dagger}(\mathbb{R}^d)$
and multiplication $\cdot: \mathcal{S}^*_{\dagger}(\mathbb{R}^d)\times \mathcal{S}^*_{\dagger}(\mathbb{R}^d)\to \mathcal{S}^*_{\dagger}(\mathbb{R}^d)$  (uniquely) extend as continuous bilinear mappings
$$
\ast: L^{1}_{\omega_{E}}\times E \to E \quad \mbox{and} \quad
\cdot:  A_{\nu_{E}} \times E\to E
$$
such that $E$ becomes a Banach module over $L^{1}_{\omega_{E}}$ and $A_{\nu_{E}}$ with respect to these operations, that is,
\begin{equation}\label{convmodule}
\|g\ast f\|_E\leq \|g\|_{L_{\omega_{E}}^1}\|f\|_E,\quad g\in L_{\omega_E}^1,\ \forall f\in E,
\end{equation}
and
\begin{equation}\label{mult}
\|h \cdot f\|_E\leq \|h\|_{A_{\nu_{E}}}\|f\|_E,\quad \forall h\in A_{\nu_{E}},\  \forall f\in E.
\end{equation}
Furthermore, the convolution of $f\in E$ and $g\in L^{1}_{\omega_E}$ can be represented as a Bochner integral of an $E$-valued function, that is,
\beq\label{convint1}
g*f=\int_{\RR^d}g(y)T_{y}f dy,
\eeq
while its multiplication with $h\in A_{\nu_{E}}$ is given by the Bochner integral
\beq\label{multint1}
h\cdot f=\int_{\RR^d}(\mathcal{F}^{-1}h)(\xi) M_{-\xi}f d\xi.
\eeq
\end{proposition}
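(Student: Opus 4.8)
The plan is to exploit the characterization recorded just before the statement: $E$ is a TMIB of class $*-\dagger$ precisely when $E$ is a TIB of class $*-\dagger$ and $\mathcal{F}E$ is a TIB of class $\dagger-*$. This immediately reduces the convolution half of the proposition to the convolution module theory for translation-invariant Banach spaces developed in \cite{TIBU}. Indeed, since $E$ is a TIB with weight function $\omega_E$, the results of \cite[Section 3]{TIBU} furnish the continuous bilinear extension $\ast\colon L^1_{\omega_E}\times E\to E$, the module estimate \eqref{convmodule}, and the Bochner integral representation \eqref{convint1}; I would simply invoke these, the uniqueness of the extension being guaranteed by the density of $\SSS^*_{\dagger}(\RR^d)$ in $E$.

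For the multiplication half, the idea is to transfer the convolution structure of $\mathcal{F}E$ back to $E$ through the Fourier transform. Since $\mathcal{F}E$ is a TIB of class $\dagger-*$ with weight $\omega_{\mathcal{F}E}=\check{\nu}_E$, \cite[Section 3]{TIBU} again makes $\mathcal{F}E$ a Banach convolution module over $L^1_{\check{\nu}_E}$. Given $h\in A_{\nu_E}=\mathcal{F}L^1_{\nu_E}$, I would write $h=\mathcal{F}\psi$ with $\psi\in L^1_{\nu_E}$ and $\|h\|_{A_{\nu_E}}=\|\psi\|_{L^1_{\nu_E}}$; a change of variables gives $\check{\psi}\in L^1_{\check{\nu}_E}$ with $\|\check{\psi}\|_{L^1_{\check{\nu}_E}}=\|\psi\|_{L^1_{\nu_E}}$. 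The extended product is then defined by $h\cdot f:=\mathcal{F}^{-1}(\hat{h}\ast\hat{f})$, which coincides with ordinary multiplication on $\SSS^*_{\dagger}(\RR^d)$ because of the exchange formula $\mathcal{F}(h\cdot f)=\hat{h}\ast\hat{f}$, and is therefore the unique continuous extension (the test functions $\SSS^*_{\dagger}(\RR^d)$ being dense in both $A_{\nu_E}$ and $E$).

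The estimate \eqref{mult} then follows at once from the chain
\[
\|h\cdot f\|_E=\|\mathcal{F}(h\cdot f)\|_{\mathcal{F}E}=\|\hat{h}\ast\hat{f}\|_{\mathcal{F}E}\leq\|\hat{h}\|_{L^1_{\check{\nu}_E}}\|\hat{f}\|_{\mathcal{F}E}=\|h\|_{A_{\nu_E}}\|f\|_E,
\]
where I use $\hat{h}=\mathcal{F}^2\psi=\check{\psi}$, the definition of the norm on $\mathcal{F}E$, and \eqref{convmodule} applied inside $\mathcal{F}E$. For the Bochner representation \eqref{multint1}, I would apply Fourier inversion to the identity $\hat{h}\ast\hat{f}=\int_{\RR^d}\check{\psi}(y)\,T_y\hat{f}\,dy$ supplied by \eqref{convint1} in $\mathcal{F}E$; since $\mathcal{F}^{-1}$ commutes with the vector-valued Bochner integral and $\mathcal{F}^{-1}(T_y\hat{f})=M_y f$ (a consequence of $\mathcal{F}M_\xi=T_\xi\mathcal{F}$), this gives $h\cdot f=\int_{\RR^d}\check{\psi}(y)\,M_y f\,dy$, whereupon the substitution $\xi=-y$ together with the identity $\mathcal{F}^{-1}h=\psi$ rewrites the right-hand side as \eqref{multint1}.

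I expect the step requiring the most care to be the justification that $\mathcal{F}^{-1}$ may be pulled inside the Bochner integral and that the $E$-valued integrand $y\mapsto\check{\psi}(y)M_y f$ is strongly measurable and Bochner integrable. This rests on the continuity of the Fourier transform as an isomorphism between the relevant ultradistribution spaces $\SSS'^{\dagger}_{*}(\RR^d)$ and $\SSS'^{*}_{\dagger}(\RR^d)$, on the strong continuity of the modulation group recorded in \eqref{TMeq3.2}, and on the modulation bound $\nu_E(\xi)\leq Ce^{M_{\tau}(|\xi|)}$ from \eqref{omega}; the remaining manipulations are routine changes of variables and applications of the convolution theorem.
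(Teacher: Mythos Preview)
Your proposal is correct and follows essentially the same route as the paper: both invoke the convolution-module results of \cite[Section 3]{TIBU} for the TIB $E$, then obtain the multiplication assertions by applying the same results to the TIB $\mathcal{F}E$ and pulling back via $\mathcal{F}^{-1}$, using that Bochner integrals commute with continuous linear maps and that $\SSS^*_{\dagger}(\RR^d)$ is dense in $E$, $L^1_{\omega_E}$ and $A_{\nu_E}$. Your write-up is merely more explicit about the change of variables and the identity $\mathcal{F}^{-1}(T_y\hat f)=M_y f$.
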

\begin{proof}
If $f,g\in \SSS^{\ast}_{\dagger}(\RR^{d})$, the relations \eqref{convmodule} and \eqref{convint1} follow directly from \cite[Lemmas 3.7 and 3.8]{TIBU}. Applying the same results to the TIB  $\mathcal{F}E$ of class $\dagger-*$ and mapping back with Fourier inverse transform, we obtain  (\ref{mult}) and (\ref{multint1}) (Bochner integrals commute with continuous linear mappings). The rest follows from a density argument (using also the Lebesgue dominated convergence theorem for Bochner integrals) because $\SSS^{\ast}_{\dagger}(\RR^{d})$ is dense in the three spaces $E,$ $L^{1}_{\omega_{E}}$, and $A_{\nu_E}$ (see, e.g., \cite[Lemma 3.9]{TIBU} for the latter two cases).
\end{proof}

The integral formulas in Proposition \ref{propdoublemodule} are a powerful tool; the proof of the following corollary is an elegant showcase for this.

\begin{corollary}\label{vsktjr157}
Let $E$ be a TMIB of class $*-\dagger$ and
let $\varphi,\chi\in \SSS^*_{\dag}(\RR^d)$ with $\varphi(0)=1$ and $\int_{\RR^d}\chi(x)dx=1$, and, for each $n\in\ZZ_+$, set $\varphi_n(x)=\varphi(x/n)$ and $\chi_n(x)=n^d\chi(nx)$. Given any $f\in E$, we have
$$\chi_n*(\varphi_n\cdot f)\rightarrow f\quad  \mbox{in }E.$$
 Furthermore, for each $g\in E'$ the functions $\varphi_n\cdot(\chi_n*g)$ belong to $\SSS^*_{\dag}(\RR^d)$ and
$$\varphi_n\cdot(\chi_n*g)\rightarrow g\quad \mbox{weakly* in } E'.$$
\end{corollary}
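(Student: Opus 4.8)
The plan is to derive both assertions from two approximate-identity facts for the module actions of Proposition \ref{propdoublemodule}, which I will establish through the Bochner integral formulas \eqref{convint1} and \eqref{multint1}; the second (weak$^*$) assertion will then follow from the first by a duality argument, and only the regularity claim $\varphi_n\cdot(\chi_n\ast g)\in\SSS^*_{\dagger}$ will require a direct estimate.

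\emph{Step 1 (two approximations in $E$).} I first show that $\chi_n\ast f\to f$ and $\varphi_n\cdot f\to f$ in $E$ for every $f\in E$. The substitution $z=ny$ in \eqref{convint1} gives $\chi_n\ast f=\int_{\RR^d}\chi(z)\,T_{z/n}f\,dz$, so that, since $\int_{\RR^d}\chi=1$,
\[
\chi_n\ast f-f=\int_{\RR^d}\chi(z)\bigl(T_{z/n}f-f\bigr)\,dz .
\]
For each fixed $z$ the integrand tends to $0$ in $E$, because the translations form a $C_0$-group (see \eqref{TMeq3.2}); moreover $\|\chi(z)(T_{z/n}f-f)\|_E\le |\chi(z)|\bigl(\omega_E(z/n)+1\bigr)\|f\|_E\le C|\chi(z)|e^{A_{\tau}(|z|)}\|f\|_E$ is an integrable majorant uniform in $n\ge1$, since $A_{\tau}$ is non-decreasing and $\chi\in\SSS^*_{\dagger}$ decays faster than $e^{-A_{\tau}}$ (a decay rate exceeding $\tau$ can be secured in both the Beurling and the Roumieu case). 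Dominated convergence for Bochner integrals yields $\chi_n\ast f\to f$. The multiplication statement is analogous: from $\mathcal F^{-1}\varphi_n(\xi)=n^{d}(\mathcal F^{-1}\varphi)(n\xi)$ and $\int_{\RR^d}\mathcal F^{-1}\varphi=\varphi(0)=1$, formula \eqref{multint1} and the substitution $\zeta=n\xi$ give $\varphi_n\cdot f-f=\int_{\RR^d}(\mathcal F^{-1}\varphi)(\zeta)\bigl(M_{-\zeta/n}f-f\bigr)\,d\zeta$, and one concludes identically, now using the continuity of $\xi\mapsto M_{\xi}f$ and the bound $\nu_E\le Ce^{M_{\tau}}$ against the decay of $\mathcal F^{-1}\varphi\in\SSS^{\dagger}_{*}$.

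\emph{Step 2 (first assertion and weak$^*$ convergence).} The same substitution shows $\|\chi_n\|_{L^1_{\omega_E}}=\int_{\RR^d}|\chi(z)|\,\omega_E(z/n)\,dz\le C\int_{\RR^d}|\chi(z)|e^{A_{\tau}(|z|)}\,dz$ is bounded uniformly in $n$. Writing
\[
\chi_n\ast(\varphi_n\cdot f)-f=\chi_n\ast(\varphi_n\cdot f-f)+(\chi_n\ast f-f)
\]
and invoking the module inequality \eqref{convmodule}, the first term is $\le\sup_n\|\chi_n\|_{L^1_{\omega_E}}\|\varphi_n\cdot f-f\|_E\to0$ and the second tends to $0$ by Step 1; this proves $\chi_n\ast(\varphi_n\cdot f)\to f$ in $E$. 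For the weak$^*$ statement, the dual actions on $E'$ are defined by transposition, $\langle\varphi_n\cdot(\chi_n\ast g),f\rangle=\langle\chi_n\ast g,\varphi_n\cdot f\rangle=\langle g,\check{\chi}_n\ast(\varphi_n\cdot f)\rangle$ for $f\in E$, where $\check{\chi}\in\SSS^*_{\dagger}$, $\int\check{\chi}=1$ and $\check{\chi}_n=(\check{\chi})_n$. Applying the first assertion with $\check{\chi}$ in place of $\chi$ gives $\check{\chi}_n\ast(\varphi_n\cdot f)\to f$ in $E$, whence $\langle g,\check{\chi}_n\ast(\varphi_n\cdot f)\rangle\to\langle g,f\rangle$ by continuity of $g$; this is precisely $\varphi_n\cdot(\chi_n\ast g)\to g$ weakly$^*$ in $E'$.

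\emph{Step 3 (regularity, the main obstacle).} It remains to verify $\varphi_n\cdot(\chi_n\ast g)\in\SSS^*_{\dagger}$, which I expect to be the only genuinely technical point. Since $E'\hookrightarrow\SSS'^*_{\dagger}$, the convolution $(\chi_n\ast g)(x)=\langle g,T_x\check{\chi}_n\rangle$ is an ultradifferentiable function with $D^{\alpha}(\chi_n\ast g)(x)=(-1)^{|\alpha|}\langle g,T_x(D^{\alpha}\check{\chi}_n)\rangle$, hence $|D^{\alpha}(\chi_n\ast g)(x)|\le\|g\|_{E'}\,\omega_E(x)\,\|D^{\alpha}\check{\chi}_n\|_E$; using the continuity of $\SSS^*_{\dagger}\hookrightarrow E$ together with $(M.2)$ one bounds $\|D^{\alpha}\check{\chi}_n\|_E\le C h_0^{-|\alpha|}M_{\alpha}$, so $\chi_n\ast g$ is of the correct ultradifferentiable class and grows at most like $\omega_E(x)\le Ce^{A_{\tau}(|x|)}$. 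Applying the Leibniz rule to $\varphi_n\cdot(\chi_n\ast g)$, estimating $|D^{\beta}\varphi_n(x)|\le C(nh_1)^{-|\beta|}M_{\beta}\,e^{-A(h_1|x|/n)}$ from the Gelfand--Shilov seminorms of $\varphi$, and combining the two derivative bounds via the super-multiplicativity $M_{\beta}M_{\alpha-\beta}\le M_{\alpha}$ (a consequence of $(M.1)$) together with $\sum_{\beta\le\alpha}\binom{\alpha}{\beta}=2^{|\alpha|}$, one obtains, for each fixed $n$, a bound of the form $h^{|\alpha|}\|e^{A_h}D^{\alpha}(\varphi_n\cdot(\chi_n\ast g))\|_{L^{\infty}}\le C_n M_{\alpha}$. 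The one point demanding care—and the reason I single this step out—is that the dilation weakens the decay of $\varphi_n$ to $e^{-A(h_1|x|/n)}$, which must still dominate the growth $e^{A_{\tau}(|x|)}$ of $\chi_n\ast g$; this is arranged by the standard convexity estimates for the associated function $A$, choosing the decay rate of $\varphi$ large enough in the Beurling case and $\tau$ small enough in the Roumieu case. Finiteness of the displayed bound is exactly the assertion $\varphi_n\cdot(\chi_n\ast g)\in\SSS^*_{\dagger}$.
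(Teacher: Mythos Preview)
Your proof is correct and follows essentially the same approach as the paper: both arguments use the Bochner integral representations \eqref{convint1} and \eqref{multint1} together with the $C_0$-group continuity of translations and modulations and dominated convergence, and then obtain the weak$^*$ statement by duality. The only differences are cosmetic---you establish the two approximate-identity facts separately and combine them via \eqref{convmodule}, whereas the paper estimates $\|f-\chi_n*(\varphi_n f)\|_E$ directly in one pass, and your Step~3 supplies detailed bounds for what the paper dismisses as ``trivial'' (for each fixed $n$ this is simply the standard inclusion $\SSS^*_{\dagger}\cdot(\SSS^*_{\dagger}*\SSS'^*_{\dagger})\subseteq\SSS^*_{\dagger}$, so the dilation-dependent decay is not a genuine obstacle).
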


\begin{proof} Clearly $\int_{\RR^d}\mathcal{F}^{-1}\varphi_n(\xi)d\xi=1$. For $f\in E$, (\ref{convint1}) and (\ref{multint1}) imply\\
\\
$\|f-\chi_n*(\varphi_n\cdot f)\|_E$
\beqs
&\leq& \int_{\RR^d}|\mathcal{F}^{-1}\varphi_n(\xi)|\|f-M_{-\xi}f\|_Ed\xi+ \int_{\RR^d}|\chi_n(y)|\|\varphi_n\cdot f-T_y(\varphi_n\cdot f)\|_E dy\\
&\leq& \int_{\RR^d}|\mathcal{F}^{-1}\varphi(\xi)|\|f-M_{-\xi/n}f\|_Ed\xi\\
&{}&+ \int_{\RR^{2d}}|\chi(y)||\mathcal{F}^{-1}\varphi(\xi)| \|M_{-\xi/n}f-T_{y/n}M_{-\xi/n}f\|_Edyd\xi.
\eeqs
Now, Definition \ref{defmod} (c) together with dominated convergence proves $\chi_n*(\varphi_n\cdot f)\rightarrow f$ in $E$ (note that the mappings \eqref{TMeq3.2} are continuous). For $g\in E'\subseteq \SSS'^*_{\dag}(\RR^d)$, it is trivial that $\varphi_n(\chi_n*g)\in\SSS^*_{\dag}(\RR^d)$ (this follows by direct verification, but, alternatively, it also follows from general results such as \cite[Theorems 4.19 and 4.20]{d-v-ind2018}). The last part then follows by duality and the first part of the corollary.
\end{proof}

We are now able to obtain a characterization of compact subsets of a TMIB  of class $*-\dagger$ in the spirit of \cite{F84}. Indeed,

\begin{proposition}\label{newf}
Let $G$ be a closed subset of a TMIB space $E$ of class $*-\dagger$. Then, $G$ is compact in $E$ if and only if the following conditions hold:
\begin{itemize}
\item[(i)] $\sup_{f\in G}\|f\|_E=C<\infty$;
\item[(ii)] $\forall\varepsilon>0$, $\exists \theta\in\mathcal{S}^*_\dagger(\RR^d)$ such that  $\|\theta*f-f\|_E<\varepsilon$, $\forall f\in G$;
\item[(iii)] $\forall\varepsilon>0$, $\exists \psi\in\mathcal{S}^*_\dagger(\RR^d)$ such that  $\|\psi f-f\|_E<\varepsilon$, $\forall f\in G$.
\end{itemize}
\end{proposition}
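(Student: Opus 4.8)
The plan is to establish the two implications separately, the necessity of $(i)$--$(iii)$ being routine and the sufficiency resting on the construction of a compact operator that approximates the identity uniformly on $G$. Throughout I would work with the approximate identity $\chi_n(x)=n^d\chi(nx)$ and the dilated bump $\varphi_n(x)=\varphi(x/n)$ of Corollary~\ref{vsktjr157}, recording first the uniform module bounds $\sup_n\|\chi_n\|_{L^1_{\omega_E}}<\infty$ and $\sup_n\|\varphi_n\|_{A_{\nu_E}}<\infty$; after the substitutions $y=z/n$, $\xi=\zeta/n$ these reduce to the integrability of $|\chi(z)|e^{A_\tau(|z|)}$ and $|\mathcal F^{-1}\varphi(\zeta)|e^{M_\tau(|\zeta|)}$, which holds because $\chi,\mathcal F^{-1}\varphi\in\SSS^*_\dag$ decay faster than the weights permitted by \eqref{omega} while $A_\tau,M_\tau$ are non-decreasing. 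The decisive preliminary is the reduction that, for any \emph{bounded} $G\subseteq E$, condition $(ii)$ is equivalent to $\sup_{f\in G}\|\chi_n*f-f\|_E\to 0$ and $(iii)$ to $\sup_{f\in G}\|\varphi_n\cdot f-f\|_E\to 0$. One direction is trivial (take $\theta=\chi_n$, $\psi=\varphi_n$). For the converse, given $\theta$ as in $(ii)$, I would split
\[
\chi_n*f-f=\chi_n*(f-\theta*f)+(\chi_n*\theta-\theta)*f+(\theta*f-f),
\]
bound the first summand by $\|\chi_n\|_{L^1_{\omega_E}}\|f-\theta*f\|_E$, the third uniformly by $\varepsilon$, and the middle by $\|\chi_n*\theta-\theta\|_{L^1_{\omega_E}}\sup_{f\in G}\|f\|_E$, which tends to $0$ since $\chi_n$ is an approximate identity in the Beurling algebra $L^1_{\omega_E}$ and $\theta\in\SSS^*_\dag\subseteq L^1_{\omega_E}$; the analogous splitting in $A_{\nu_E}$ handles $(iii)$.

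For the necessity, $(i)$ is immediate. If $G$ is compact then the two groups act \emph{uniformly equicontinuously}, i.e.\ $\sup_{f\in G}\|T_yf-f\|_E\to0$ as $y\to0$ and $\sup_{f\in G}\|M_{-\xi}f-f\|_E\to0$ as $\xi\to0$: cover $G$ by finitely many small balls, invoke the strong continuity \eqref{TMeq3.2} at the centres, and absorb the remainders using the local boundedness of $\omega_E,\nu_E$. Inserting this into the Bochner representations $\chi_n*f-f=\int\chi_n(y)(f-T_yf)\,dy$ and $\varphi_n\cdot f-f=\int\mathcal F^{-1}\varphi_n(\xi)(f-M_{-\xi}f)\,d\xi$ coming from \eqref{convint1}--\eqref{multint1}, splitting off a neighbourhood of the origin and dominating the tail by $(i)$ and the uniform module bounds, yields $(ii)$ and $(iii)$ with $\theta=\chi_n$, $\psi=\varphi_n$ for $n$ large.

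For the sufficiency, since $E$ is complete it suffices to prove that a closed $G$ obeying $(i)$--$(iii)$ is totally bounded. The key object is $\mathcal T_n f:=\varphi_n\cdot(\chi_n*f)$. I claim $\mathcal T_n$ is \emph{compact} on $E$: by the regularization remark used in the proof of Corollary~\ref{vsktjr157} one has $\varphi_n\cdot(\chi_n*f)\in\SSS^*_\dag(\RR^d)$ for every $f\in E\subseteq\SSS'^*_\dag(\RR^d)$, and the linear map $\mathcal T_n\colon E\to\SSS^*_\dag(\RR^d)$ has closed graph (both spaces include continuously into $\SSS'^*_\dag(\RR^d)$, where limits may be compared), hence is continuous by the closed graph theorem. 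Thus $\mathcal T_n$ sends the unit ball of $E$ into a bounded subset of the Montel space $\SSS^*_\dag(\RR^d)$, which is relatively compact there and, through the continuous inclusion $\SSS^*_\dag(\RR^d)\hookrightarrow E$ of Definition~\ref{defmod}$(a)$, relatively compact in $E$. Next I would verify $\sup_{f\in G}\|f-\mathcal T_nf\|_E\to0$ by writing $f-\mathcal T_nf=(f-\varphi_n\cdot f)+\varphi_n\cdot(f-\chi_n*f)$ and estimating the second term by $\|\varphi_n\|_{A_{\nu_E}}\|f-\chi_n*f\|_E$: both summands vanish uniformly on $G$ by the preliminary reduction of the first paragraph together with the uniform bound on $\|\varphi_n\|_{A_{\nu_E}}$. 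Finally, given $\varepsilon>0$ I would fix $n$ with $\sup_{f\in G}\|f-\mathcal T_nf\|_E<\varepsilon/2$; since $\mathcal T_n(G)$ is totally bounded it has a finite $\varepsilon/2$-net, which is then an $\varepsilon$-net for $G$, so $G$ is totally bounded and therefore compact.

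The sufficiency is where the real work lies, and the decisive point is the observation that convolving with $\chi_n$ and then multiplying by $\varphi_n$ lands back in $\SSS^*_\dag(\RR^d)$: this lets the compactness of $\mathcal T_n$ be harvested from the Montel property of $\SSS^*_\dag(\RR^d)$ rather than from a direct Arzel\`a--Ascoli argument, and it is precisely the order ``smooth, then localize'' that avoids the awkward cross term, because $\|\varphi_n\|_{A_{\nu_E}}$ stays bounded in $n$ — which is exactly where the growth hypotheses \eqref{omega} enter. The only case-dependent subtlety is the closed graph step in the Roumieu setting, where $\SSS^{\{M_p\}}_{\{A_p\}}(\RR^d)$ is webbed and $E$ is ultrabornological, so that De~Wilde's closed graph theorem applies; the Beurling cases need only the classical one.
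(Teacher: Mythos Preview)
Your proof is correct, and the overall architecture---regularize into $\SSS^*_\dagger$, exploit Montel, invoke a closed graph theorem---matches the paper's. The two proofs differ mainly in how the sufficiency is organized.

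For necessity, your explicit equicontinuity-plus-integral-splitting argument is just an unrolled version of the Banach--Steinhaus step the paper uses: both show that $f\mapsto\chi_n*f$ and $f\mapsto\varphi_n\cdot f$ converge to $\mathrm{Id}$ uniformly on compacta.

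For sufficiency the routes genuinely diverge. The paper works directly with the $\theta,\psi$ supplied by $(ii)$--$(iii)$: it first fixes $\theta$, then chooses $\psi$ with error $\varepsilon/(2(1+\|\theta\|_{L^1_{\omega_E}}))$, so that $f\mapsto\theta*(\psi f)$ approximates the identity uniformly on $G$ without any preliminary reduction. It then argues \emph{sequentially}: a sequence in $G$ is bounded in $\SSS'^*_\dagger(\RR^d)$, hence has a convergent subsequence there (Montel plus metrizability of bounded subsets of $\SSS'^*_\dagger$); since $u\mapsto\theta*(\psi u)$ is continuous $\SSS'^*_\dagger\to\SSS^*_\dagger$ by Pt\'ak's closed graph theorem, the regularized subsequence is Cauchy in $\SSS^*_\dagger$ and hence in $E$. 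Your version instead first reduces $(ii)$--$(iii)$ to the specific approximants $\chi_n,\varphi_n$ (to secure the uniform bound $\sup_n\|\varphi_n\|_{A_{\nu_E}}<\infty$ needed to control the cross term), then shows $\mathcal T_n=\varphi_n\cdot(\chi_n*\,\cdot\,)$ is compact on $E$ by factoring through $\SSS^*_\dagger$ via a closed graph argument from $E$, and concludes by total boundedness. Your packaging as ``compact operator uniformly close to the identity'' is conceptually clean; the price is the extra reduction paragraph. The paper's ordering of $\theta,\psi$ avoids that reduction entirely, and its closed graph step lives on $\SSS'^*_\dagger$ (a Pt\'ak space in all cases), so it does not need to invoke De~Wilde separately for the Roumieu case as you do.
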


\begin{proof} The proof is similar to those of \cite[Theorem 2.1 and Theorem 2.2]{F84}. The lack of functions with compact support in $\SSS^*_{\dagger}(\RR^d)$ when $M_p$ is quasianalytic requires to slightly modify the argument; however, the integral formulas of Proposition \ref{propdoublemodule} and the topological structure of $\SSS^*_{\dagger}(\RR^d)$ and its dual give a way to overcome this difficulty.\\
\indent Assume $G$ satisfies (i)--(iii). Let $\varepsilon>0$ be fixed. Take $\theta\in\SSS^*_{\dagger}(\RR^d)$ such that $\|f-\theta*f\|_E\leq \varepsilon/2$, $\forall f\in G$, and for this $\theta$ pick $\psi\in\SSS^*_{\dagger}(\RR^d)$ such that $\|f-\psi f\|_E\leq \varepsilon/(2(1+|\theta\|_{L^1_{\omega_E}}))$. Then
\beq\label{vstklc157}
\|f-\theta*(\psi f)\|_E\leq \|f-\theta*f\|_E+\|\theta*(f-\psi f)\|_E\leq \varepsilon,\,\, \forall f\in G.
\eeq
Let $\{f_n\}_{n\in\ZZ_+}$ be a sequence in $G$. Since $G$ is bounded in $E$ it is bounded in $\SSS'^*_{\dagger}(\RR^d)$, hence relatively compact in $\SSS'^*_{\dagger}(\RR^d)$ (the latter space is Montel). Furthermore, the topology induced on $G$ by $\SSS'^*_{\dagger}(\RR^d)$ is metrizable; in the $\{M_p\}-\{A_p\}$ case this is trivial since $\SSS'^{\{M_p\}}_{\{A_p\}}(\RR^d)$ is Fr\'echet and in the $(M_p)-(A_p)$, $\emptyset-(A_p)$, $(M_p)-\emptyset$ and $\emptyset-\emptyset$ cases this follows from \cite[Theorem 1.7, p. 128]{Sch} and \cite[Proposition 36.9, p. 376]{Treves}. Thus, there exists a subsequence of $\{f_n\}_n$ which, by abuse of notation, we still denote by $\{f_n\}_n$ that converges to $f\in \SSS'^*_{\dagger}(\RR^d)$ in $\SSS'^*_{\dagger}(\RR^d)$. Let $\varepsilon>0$ be arbitrary but fixed. Pick $\psi,\theta\in\SSS^*_{\dagger}(\RR^d)$ such that (\ref{vstklc157}) holds with $\varepsilon/3$. We claim that the mapping $u\mapsto \theta*(\psi u)$, $\SSS'^*_{\dagger}(\RR^d)\rightarrow \SSS^*_{\dagger}(\RR^d)$, is continuous. Clearly, it is well-defined, namely, $\theta*(\psi u)\in\SSS^*_{\dagger}(\RR^d)$. It is readily seen that it is continuous if we regard it as a mapping from $\SSS'^*_{\dagger}(\RR^d)$ into itself, hence its graph is a closed subspace of $\SSS'^*_{\dagger}(\RR^d)\times\SSS^*_{\dagger}(\RR^d)$. The Pt\'{a}k closed graph theorem \cite[Theorem 8.5, p. 166]{Sch} therefore proves its continuity ($\SSS^*_{\dagger}(\RR^d)$ is a Pt\'{a}k space, see the examples in \cite[Section 4.8, p. 162]{Sch}). Thus, the sequence $\{\theta*(\psi f_n)\}_n$ is Cauchy in $\SSS^*_{\dagger}(\RR^d)$ and hence in $E$ as well. Hence, there exists $n_0\in\ZZ_+$ such that for all $n,m\geq n_0$, $n,m\in\ZZ_+$, we have $\|\theta*(\psi f_n)-\theta*(\psi f_m)\|_E\leq \varepsilon/3$. Combining these facts, we conclude $\|f_n-f_m\|_E\leq \varepsilon$, $\forall n,m\geq n_0$, i.e., $\{f_n\}_n$ is Cauchy sequence in $E$ and hence $f\in E$ and $f_n\rightarrow f$ in $E$, which proves that $G$ is compact. Assume now that $G$ is compact. Clearly, (i) is satisfied. By using the integral formulas in Proposition \ref{propdoublemodule}, in the same way as in the proof of Corollary \ref{vsktjr157}, one can prove that for each $f\in E$ we have $\chi_n*f\rightarrow f$ and $\varphi_n f\rightarrow f$ in $E$, where $\chi_n$ and $\varphi_n$ are as in Corollary \ref{vsktjr157}. Thus, the continuous mappings $f\mapsto \chi_n*f$ and $f\mapsto \varphi_n f$, $E\rightarrow E$, tend to $\mathrm{Id}$ when we equip $\mathcal{L}(E,E)$ with the topology of simple convergence. The Banach-Steinhaus theorem implies that the convergence holds for the topology of precompact convergence and hence $G$ satisfies (ii) and (iii).
\end{proof}

A Banach space is called a dual translation-modulation invariant Banach space of ultradistributions (in short: DTMIB) of class $*-\dagger$ if it is the strong dual of a TMIB of class $*-\dagger$. If a DTMIB $E$ is reflexive, it is actually a TMIB of class $*-\dagger$, as follows from \cite[Proposition 3.14, p. 156]{TIBU}; however, this is not the case in general without reflexivity (e.g., property (a) from Definition \ref{defmod} fails for $E=L^{\infty}$). On the other hand, we always have that $E\hookrightarrow \mathcal{S}'^*_{\dagger}(\RR^d)$, properties (b) and (c) from Definition \ref{defmod}
hold for $E$, and \eqref{TMeq3.2} are weakly* continuous mappings. Furthermore, we can also provide $E$ with a convolution and multiplication Banach module structures. Define its weight functions also as in \eqref{omega}. Suppose $E=F'$ where $F$ is a TMIB of class $*-\dagger$. It is then clear that $\omega_{E}=\check{\omega}_{F}$ and $\nu_{E}=\nu_{F}$. For a given $f\in E$ we can then introduce its multiplication with $h\in A_{\nu_{E}}$ by duality: $\langle h f, \varphi\rangle = \langle f, h\varphi\rangle$, for $\varphi\in F$. Similarly, one defines the convolution $g\ast f$ for $g\in L^{1}_{\omega_{E}}$ (see \cite[Eq. (3.11), p. 156]{TIBU}). From Proposition (\ref{propdoublemodule}) one easily obtains the following dual version. (Hereafter for DTMIB, Pettis integrals are taken in the weak* topology of $E$ with respect to the duality $E=F'$.)

\begin{corollary}\label{TMIcordualmodule} Let $E$ be a DTMIB of class $*-\dagger$ . Then, \eqref{convmodule} and \eqref{mult} hold. Furthermore, the representations \eqref{convint1} and \eqref{multint1} are valid if interpreted as $E$-valued Pettis integrals.
\end{corollary}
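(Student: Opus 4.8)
The plan is to derive everything by dualizing Proposition \ref{propdoublemodule} applied to the TMIB $F$, where $E=F'$. Recall that the module operations on $E$ are the transposes of those on $F$: for $h\in A_{\nu_E}$ and $g\in L^1_{\omega_E}$ one has $\langle hf,\varphi\rangle=\langle f,h\varphi\rangle$ and $\langle g*f,\varphi\rangle=\langle f,\check g*\varphi\rangle$ for all $\varphi\in F$, with $\check g(x)=g(-x)$. The two ingredients I would keep at hand are the weight identities $\omega_E=\check\omega_F$ and $\nu_E=\nu_F$, together with the transpose relations $\langle T_yf,\varphi\rangle=\langle f,T_{-y}\varphi\rangle$ and $\langle M_{-\xi}f,\varphi\rangle=\langle f,M_{-\xi}\varphi\rangle$, which hold for the bilinear duality $E=F'$ because $T_y$ has transpose $T_{-y}$ while $M_\xi$ is self-transpose.

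First I would establish the norm estimates. Using the dual-norm representation of $\|\cdot\|_E$ and the defining identity for the multiplication,
\[
\|hf\|_E=\sup_{\|\varphi\|_F\le1}|\langle hf,\varphi\rangle|=\sup_{\|\varphi\|_F\le1}|\langle f,h\varphi\rangle|\le\|f\|_E\sup_{\|\varphi\|_F\le1}\|h\varphi\|_F\le\|h\|_{A_{\nu_F}}\|f\|_E=\|h\|_{A_{\nu_E}}\|f\|_E,
\]
where the penultimate step is \eqref{mult} for $F$ and the last equality uses $\nu_E=\nu_F$; this is \eqref{mult} for $E$. The same computation together with \eqref{convmodule} for $F$ gives $\|g*f\|_E\le\|\check g\|_{L^1_{\omega_F}}\|f\|_E$, and a change of variables with $\omega_E=\check\omega_F$ yields $\|\check g\|_{L^1_{\omega_F}}=\int|g(-x)|\omega_F(x)\,dx=\int|g(y)|\omega_E(y)\,dy=\|g\|_{L^1_{\omega_E}}$, which is \eqref{convmodule} for $E$.

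For the integral representations, I would fix $\varphi\in F$ and reduce the claimed weak* Pettis identities to the Bochner formulas \eqref{convint1} and \eqref{multint1} already valid for $F$. Applying the continuous functional $f\in F'$ to the $F$-valued Bochner integral $\check g*\varphi=\int_{\RR^d}g(z)T_{-z}\varphi\,dz$ (obtained from \eqref{convint1} for $F$ after the substitution $z=-y$), and using that Bochner integrals commute with continuous linear maps, gives
\[
\langle g*f,\varphi\rangle=\langle f,\check g*\varphi\rangle=\int_{\RR^d}g(z)\langle f,T_{-z}\varphi\rangle\,dz=\int_{\RR^d}g(z)\langle T_zf,\varphi\rangle\,dz,
\]
which is exactly the assertion that $g*f=\int g(z)T_zf\,dz$ as an $E$-valued weak* Pettis integral. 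The multiplication case is analogous: pairing $f$ with $h\varphi=\int(\mathcal{F}^{-1}h)(\xi)M_{-\xi}\varphi\,d\xi$ from \eqref{multint1} for $F$ and invoking self-transposition of $M_{-\xi}$ yields $\langle hf,\varphi\rangle=\int(\mathcal{F}^{-1}h)(\xi)\langle M_{-\xi}f,\varphi\rangle\,d\xi$, i.e. \eqref{multint1} for $E$. Scalar integrability of the integrands, needed for the Pettis integrals to be well defined, follows from \eqref{omega} and the weight identities, e.g. $|g(z)\langle T_zf,\varphi\rangle|\le|g(z)|\,\omega_E(z)\,\|f\|_E\|\varphi\|_F$.

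The point deserving care — and the reason the statement speaks of Pettis rather than Bochner integrals — is that on the dual $E=F'$ the orbit maps \eqref{TMeq3.2} are in general only weak* continuous, so $z\mapsto T_zf$ and $\xi\mapsto M_{-\xi}f$ need not be strongly measurable and the integrals cannot be taken in the Bochner sense. Weak* continuity nonetheless ensures that $z\mapsto\langle T_zf,\varphi\rangle$ is continuous for each $\varphi\in F$, so the weak* Pettis integral is meaningful; the computations above then identify its value with the element $g*f$ (respectively $hf$) already produced by the duality definition, which is the crux of the argument.
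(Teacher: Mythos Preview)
Your proof is correct and follows exactly the duality route the paper has in mind: the paper merely states that the corollary ``easily'' follows from Proposition~\ref{propdoublemodule} by dualizing, and your argument is a careful unpacking of precisely that, using $\omega_E=\check\omega_F$, $\nu_E=\nu_F$, the transpose identities for $T_y$ and $M_\xi$, and the commutation of continuous functionals with Bochner integrals.
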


We are also interested in the stability of TMIB and their duals under tensor products. So, the rest of this section discusses various methods how to construct TMIB on $\RR^{d_1+d_2}$ out of two TMIB on $\RR^{d_1}$ and $\RR^{d_2}$ as completed tensor products. Our next result makes use of the so-called approximation property, we refer to \cite[\S 43]{kothe2} and \cite[Section III.9]{Sch}  for this concept.

\begin{theorem}\label{EE'}
Let $E$ and $F$ be TMIB of class $*-\dag$ on $\mathbb{R}^{d_1}$ and $\RR^{d_2}$. Then,
\begin{itemize}
\item [(i)] $E\hat{\otimes}_\epsilon F$ is a TMIB of class $*-\dag$ on $\mathbb{R}^{d_1+d_2}$  and $\omega_{E\hat{\otimes}_\epsilon F}=\omega_{E}\otimes \omega_{F}$ and $\nu_{E\hat{\otimes}_\epsilon F}=\nu_{E}\otimes \nu_{F}$.
\item [(ii)] If either $E$ or $F$ satisfies the approximation property, then $E\hat{\otimes}_\pi F$ is also a TMBI class $*-\dag$ with $\omega_{E\hat{\otimes}_\pi F}=\omega_{E\hat{\otimes}_\epsilon F}$ and $\nu_{E\hat{\otimes}_\pi F}=\nu_{E\hat{\otimes}_\epsilon F}$.
\end{itemize}

\end{theorem}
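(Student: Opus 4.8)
The plan is to reduce everything to the functoriality of the completed $\epsilon$- and $\pi$-tensor products together with the canonical isomorphisms of Proposition \ref{ktlshv135}. Write $\iota_E\colon\SSS^*_\dag(\RR^{d_1})\to E$ and $j_E\colon E\to\SSS'^*_\dag(\RR^{d_1})$ for the two continuous inclusions furnished by Definition \ref{defmod}(a), and analogously $\iota_F,j_F$. Applying the $\epsilon$-tensor functor and Proposition \ref{ktlshv135} I first obtain the chain of continuous linear maps
\[
\SSS^*_\dag(\RR^{d_1+d_2})\cong\SSS^*_\dag(\RR^{d_1})\hat{\otimes}_\epsilon\SSS^*_\dag(\RR^{d_2})\xrightarrow{\iota_E\hat{\otimes}_\epsilon\iota_F}E\hat{\otimes}_\epsilon F\xrightarrow{j_E\hat{\otimes}_\epsilon j_F}\SSS'^*_\dag(\RR^{d_1})\hat{\otimes}_\epsilon\SSS'^*_\dag(\RR^{d_2})\cong\SSS'^*_\dag(\RR^{d_1+d_2}),
\]
whose composition is, on elementary tensors $\varphi\otimes\psi$, the canonical embedding $\varphi\otimes\psi\mapsto\varphi(\cdot)\psi(\cdot)$. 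Density of $\SSS^*_\dag(\RR^{d_1+d_2})$ in $E\hat{\otimes}_\epsilon F$ is immediate, since the algebraic tensor product is dense in the completed one and $\iota_E,\iota_F$ have dense ranges; density of $E\hat{\otimes}_\epsilon F$ in $\SSS'^*_\dag(\RR^{d_1+d_2})$ follows a fortiori.

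The crux of condition (a) is the injectivity of $j_E\hat{\otimes}_\epsilon j_F$: the maps $j_E,j_F$ are continuous but \emph{not} topological embeddings, so the standard injectivity of the $\epsilon$-product does not apply. I would establish it through the isometric embedding of $E\hat{\otimes}_\epsilon F$ into the bounded bilinear forms on $E'\times F'$, under which a $w$ in the $\epsilon$-product vanishes if and only if its associated form $B_w$ vanishes on all of $E'\times F'$; the associated operator $T_w\colon E'\to F$, $\langle y',T_wx'\rangle=B_w(x',y')$, is a norm limit of finite-rank maps, hence weak*-to-norm continuous on bounded sets. If $(j_E\hat{\otimes}_\epsilon j_F)(w)=0$, then $B_w(\,{}^t\iota_E\varphi,{}^t\iota_F\psi)=0$ for all $\varphi,\psi$. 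Fixing $\varphi$, the element $T_w({}^t\iota_E\varphi)\in F$ is annihilated by the weak*-dense set $\{{}^t\iota_F\psi\}\subseteq F'$ (weak*-dense precisely because $j_F$ is injective), so $T_w({}^t\iota_E\varphi)=0$; thus $T_w$ vanishes on the weak*-dense subspace $\{{}^t\iota_E\varphi\}\subseteq E'$. Since each functional $x'\mapsto\langle y',T_wx'\rangle$ is bounded and weak*-continuous on bounded sets, the Krein--\v{S}mulian theorem makes it weak*-continuous, whence it vanishes identically. Therefore $w=0$, giving $E\hat{\otimes}_\epsilon F\hookrightarrow\SSS'^*_\dag(\RR^{d_1+d_2})$ and completing (a).

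For (b) and (c) I would use that the translation and modulation operators on $\RR^{d_1+d_2}$ split as $T_{(x_1,x_2)}=T_{x_1}\hat{\otimes}_\epsilon T_{x_2}$ and $M_{(\xi_1,\xi_2)}=M_{\xi_1}\hat{\otimes}_\epsilon M_{\xi_2}$ (both sides agree on the dense subspace $\SSS^*_\dag(\RR^{d_1})\otimes\SSS^*_\dag(\RR^{d_2})$). Functoriality then yields (b), and because $\|\cdot\|_\epsilon$ is a uniform cross norm one has $\|u\hat{\otimes}_\epsilon v\|=\|u\|\,\|v\|$ for $u\in\mathcal{L}(E),\ v\in\mathcal{L}(F)$; hence $\omega_{E\hat{\otimes}_\epsilon F}=\omega_E\otimes\omega_F$ and $\nu_{E\hat{\otimes}_\epsilon F}=\nu_E\otimes\nu_F$ exactly. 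The bounds in \eqref{omega} follow from those for $E,F$ together with the elementary estimate $2A(\rho)\le A(H\rho)+\ln c_0$ (and its analogue for $M$), which is a direct consequence of $(M.2)$: using $|x_i|\le|(x_1,x_2)|$ gives $\omega_E(x_1)\omega_F(x_2)\le C^2e^{A_\tau(|x_1|)+A_\tau(|x_2|)}\le C^2c_0\,e^{A_{H\tau}(|(x_1,x_2)|)}$, which produces the Roumieu bound with $\tau'=H\tau$ and, taking $\tau=\tau'/H$, the Beurling bound for every prescribed $\tau'$; the modulation estimate is identical with $M$ in place of $A$.

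Finally, for part (ii) the only step that changes is injectivity, since $\hat{\otimes}_\pi$ does not preserve injections. Here I would invoke Grothendieck's characterization of the (weak) approximation property: if $E$ or $F$ has it, then the canonical map $E\hat{\otimes}_\pi F\to E\hat{\otimes}_\epsilon F$ is injective. Composing it with the embedding $E\hat{\otimes}_\epsilon F\hookrightarrow\SSS'^*_\dag(\RR^{d_1+d_2})$ from part (i) yields the required injection of $E\hat{\otimes}_\pi F$ into $\SSS'^*_\dag(\RR^{d_1+d_2})$ (the two maps again coincide on the dense subspace of elementary tensors). Continuity, density, the operator splittings, the uniform cross-norm identity $\|u\hat{\otimes}_\pi v\|=\|u\|\,\|v\|$, and the weight estimates all carry over verbatim, so that $\omega_{E\hat{\otimes}_\pi F}=\omega_{E\hat{\otimes}_\epsilon F}$ and $\nu_{E\hat{\otimes}_\pi F}=\nu_{E\hat{\otimes}_\epsilon F}$. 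The principal obstacle throughout is the injectivity into $\SSS'^*_\dag$: for the $\epsilon$-product it forces the bilinear-form and Krein--\v{S}mulian argument above (continuous injections are not embeddings), and for the $\pi$-product it is exactly what the approximation property is there to supply.
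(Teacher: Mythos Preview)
Your argument is correct and follows the same overall architecture as the paper's proof: set up the chain $\SSS^*_\dag\hat{\otimes}\SSS^*_\dag\to E\hat{\otimes}F\to\SSS'^*_\dag\hat{\otimes}\SSS'^*_\dag$, identify the only nontrivial issue as injectivity on the right, split the translation and modulation operators as tensor products, and read off the weight functions from the cross-norm identity $\|u\hat{\otimes}v\|=\|u\|\,\|v\|$. For part~(ii) both you and the paper invoke the approximation property to make the canonical map $E\hat{\otimes}_\pi F\to E\hat{\otimes}_\epsilon F$ (equivalently, into $\mathcal{B}_\epsilon(E'_\sigma,F'_\sigma)$) injective, so that part is essentially identical.

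The genuine difference is in how the injectivity of $j_E\hat{\otimes}_\epsilon j_F$ is obtained. The paper does not argue this out: it draws the commuting diagram through $\mathcal{B}_\epsilon(E'_\sigma,F'_\sigma)$ and simply cites K\"othe's theorems (in particular \cite[Theorem~5, p.~277]{kothe2}) for the fact that $E\hat{\otimes}_\epsilon F\hookrightarrow \SSS'^*_\dag(\RR^{d_1})\hat{\otimes}_\epsilon\SSS'^*_\dag(\RR^{d_2})$ is a continuous dense inclusion. Your route is more self-contained: you realize $w\in E\hat{\otimes}_\epsilon F$ as an approximable operator $T_w\colon E'\to F$, use that such operators are weak*-to-norm continuous on balls, and then combine weak*-density of ${}^tj_E(\SSS^*_\dag)$ in $E'$ with Krein--\v{S}mulian to force $T_w=0$. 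This buys independence from the specific K\"othe references at the cost of a short extra argument; conversely, the paper's diagram makes the parallel treatment of the $\pi$- and $\epsilon$-cases more transparent. You also make explicit the passage from the product bound $\omega_E\otimes\omega_F$ to a bound of the form $Ce^{A_\tau(|\,\cdot\,|)}$ via the inequality $2A(\rho)\le A(H\rho)+\ln c_0$ coming from $(M.2)$, which the paper leaves implicit.

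One notational slip to fix: in your injectivity paragraph you write ${}^t\iota_E\varphi$ and ${}^t\iota_F\psi$, but with your conventions $\iota_E\colon\SSS^*_\dag\to E$ gives ${}^t\iota_E\colon E'\to\SSS'^*_\dag$, which is the wrong direction. What you need (and what your argument actually uses) is ${}^tj_E\colon\SSS^*_\dag=(\SSS'^*_\dag)'\to E'$ and ${}^tj_F$; injectivity of $j_E$ is exactly what makes the range of ${}^tj_E$ weak*-dense. With that correction the proof stands.
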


\begin{proof}
First notice that $E\hat{\otimes}_{\pi} F$ and $E\hat{\otimes}_{\epsilon} F$ are Banach spaces (see \cite[Theorem 7, p. 178; Section 44.2, p. 267]{kothe2}). As $\SSS^*_{\dag}(\RR^n)$ and $\SSS'^*_{\dag}(\RR^n)$ are nuclear (cf. Proposition \ref{ktlshv135}), we have
\beqs
\SSS^*_{\dag}(\RR^{d_1})\otimes_{\pi=\epsilon}\SSS^*_{\dag}(\RR^{d_2})\xhookrightarrow{\mathrm{Id} \otimes_{\pi} \mathrm{Id}} E\otimes_{\pi} F\xhookrightarrow{} E\otimes_{\epsilon} F\xhookrightarrow{\mathrm{Id}\otimes_{\epsilon} \mathrm{Id}} \SSS'^*_{\dag}(\RR^{d_1})\otimes_{\pi=\epsilon}\SSS'^*_{\dag}(\RR^{d_2})
\eeqs
(cf. \cite[Theorem 8, p. 175; Section 41.5, p. 187; Theorem 6, p. 188; Theorem 1, p. 275; Theorem 1, p. 280]{kothe2}). For the moment, we denote as $\mathcal{B}_{\epsilon}(X'_{\sigma},Y'_{\sigma})$ the l.c.s. of separately continuous bilinear forms on $X'_{\sigma}\times Y'_{\sigma}$ equipped with the $\epsilon$-topology; it is complete when $X$ and $Y$ are complete (see \cite[Theorem 5, p. 167]{kothe2}). The diagram

\begin{tikzpicture}
  \matrix (m) [matrix of math nodes,row sep={3em}]
  {
   \SSS^*_{\dag}(\RR^{d_1+d_2}) & & & & \SSS'^*_{\dag}(\RR^{d_1+d_2}) \\
   \SSS^*_{\dag}(\RR^{d_1})\hat{\otimes} \SSS^*_{\dag}(\RR^{d_2}) & & & & \SSS'^*_{\dag}(\RR^{d_1})\hat{\otimes} \SSS'^*_{\dag}(\RR^{d_2})\\
   & E\hat{\otimes}_{\pi} F & & E\hat{\otimes}_{\epsilon} F & \\
   & & \mathcal{B}_{\epsilon}(E'_{\sigma},F'_{\sigma}) & &\\};
   \draw[right hook->] (m-1-1) -- (m-1-5) node[midway,above]{canonical inclusion};
   \draw[->] (m-2-1) -- (m-1-1) node[midway,left]{$\cong$};
   \draw[right hook->] (m-2-1) -- (m-3-2) node[midway,below]{$\mathrm{Id}\hat{\otimes}_{\pi}\mathrm{Id}\quad\quad$};
   \draw[right hook->] (m-2-1) -- (m-3-4) node[midway,above]{${}\quad\mathrm{Id}\hat{\otimes}_{\epsilon}\mathrm{Id}$};
   \draw[->] (m-3-2) -- (m-3-4) node[midway,below]{};
   \draw[->] (m-3-2) -- (m-4-3) node[midway,below]{};
   \draw[->] (m-3-4) -- (m-4-3) node[near start,below]{${}\quad$ can.} node[midway,below]{${}\quad\quad\quad$ inclusion};
   \draw[right hook->] (m-3-4) -- (m-2-5) node[midway,below]{${}\quad\quad\mathrm{Id}\hat{\otimes}_{\epsilon}\mathrm{Id}$};
   \draw[->] (m-2-5) -- (m-1-5) node[midway,right]{$\cong$};
\end{tikzpicture}

\noindent commutes, where, as usual, $\hookrightarrow$ denotes continuous and dense inclusion (see Proposition \ref{ktlshv135} and \cite[Theorem 5, p. 277]{kothe2}). Hence, $E\hat{\otimes}_{\epsilon} F$ satisfies Definition \ref{defmod} (a). If $E$ or $F$ satisfy the approximation property then \cite[Theorem 12, p. 240]{kothe2} implies that the canonical mapping $E\hat{\otimes}_{\pi} F\rightarrow \mathcal{B}_{\epsilon}(E'_{\sigma},F'_{\sigma})$ is injective. Now, the commutative diagram implies that we have the continuous injections 
$$
\SSS^*_{\dag}(\RR^{d_1+d_2}) \hookrightarrow E\hat{\otimes}_{\pi} F \to E\hat{\otimes}_{\epsilon} F \hookrightarrow \SSS'^*_{\dag}(\RR^{d_1+d_2}),  $$   so that $E\hat{\otimes}_{\pi} F$ satisfies Definition \ref{defmod} (a). Since $T_{(x,\xi)}=T_{x}\otimes T_{\xi}$ and $M_{(x,\xi)}=M_{x}\otimes M_{\xi}$ for all $(x,\xi)\in\RR^{2d}$, the space $E\hat{\otimes}_{\epsilon} F$ satisfies Definition \ref{defmod} (b) as well; analogously for $E\hat{\otimes}_{\pi} F$. Moreover, $T_{(x,\xi)}$ and $M_{(x,\xi)}$ are continuous on $E\hat{\otimes}_{\epsilon}F$ and $E\hat{\otimes}_{\pi} F$ and, obviously,
\beqs
\|T_{(x,\xi)}\|_{\mathcal{L}_b(E\hat{\otimes}_{\tau}F)}= \omega_E(x)\omega_F(\xi) \quad \mbox{and} \quad \|M_{(x,\xi)}\|_{\mathcal{L}_b(E\hat{\otimes}_{\tau}F)}= \nu_E(-x)\nu_F(-\xi)
\eeqs
with $\tau$ being either the $\pi$ topology or the $\epsilon$ topology (see \cite[Section 41.5, p. 187; Theorem 1, p. 275]{kothe2}). The proof of the theorem is complete.
\end{proof}

We end this section with some further remarks about TMBI and DTMBI of ultradistributions.

\begin{remark}
\label{TMrk1} Let $E$ and $F$ be as in Theorem \ref{EE'}.
\begin{itemize}
\item [(i)] The Banach space of integral operators $\mathcal{J}(E,F')$ (in the sense of Grothendieck) from $E$ to $F'$ equipped with the integral norm is a DTMIB of class $*-\dag$ on $\mathbb{R}^{d_{1}+d_{2}}$, because $\mathcal{J}(E,F')$ is isomorphic to the strong dual of $E \hat{\otimes}_{\epsilon}F.$
\item [(ii)] The dual of $(E \hat{\otimes}_{\pi} F)'$ is norm isomorphic to the space of linear bounded operators from $E$ into $F'$. Thus, if either $E$ or $F$ satisfies the approximation property, then $\mathcal{L}_b (E, F')$ is a DTMIB of class $*-\dag$ on $\mathbb{R}^{d_{1}+d_{2}}$.
\end{itemize}
\end{remark}

\begin{remark}\label{TMk2}
Let $\eta$ be a weight function of class $\dag$ on $\RR^{d}$, that is, a positive measurable function that satisfies the following condition: there exist $C,h>0$ (resp. for every $h>0$ there exists $C>0$) such that
\beq\label{trspkv157}
\eta(x+\xi)\leq C\eta(x)e^{A_{h}(|\xi|)},
\eeq
$\forall x,\xi\in\RR^{d}.$ The condition \eqref{trspkv157} ensures that $L^{p}_{\eta}$ is a TMIB of class $*-\dag$ on $\mathbb{R}^{d}$ if $1\leq p<\infty$, while $L^{\infty}_{\eta}$ is a  DTMIB of class $*-\dag$. Part (ii) of Theorem \ref{EE'} concerning $E\hat{\otimes}_{\pi} F$ is applicable when $E$ or $F$ are the spaces $L^p_{\eta}(\RR^d)$, $1\leq p<\infty$, or, if $\eta$ is in addition continuous and one of the spaces is $C_{\eta}(\RR^d)=\{\varphi\in C(\RR^d)|\, \varphi/\eta\in C_0(\RR^d)\}$, as all these spaces satisfy the approximation property.
\end{remark}

\begin{remark}
\label{TMrk3}
Let $E$ be a TMIB of class $*-\dag$ on $\mathbb{R}^{d_{1}}$ and let $\eta$ be a weight function of class $\dag$ on $\RR^{d_2}$. Note that the weighted Bochner-Lebesgue space $L^{p}_{\eta}(\mathbb{R}^{d_{2}},E)$ is a tensor product of $L^{p}_{\eta}$ and $E$ and can canonically be identified with a subspace of $\mathcal{S}'^*_{\dagger}(\RR^{d_1+d_2})$. If $1\leq p<\infty$, then
$L^{p}_{\eta}(\mathbb{R}^{d_{2}},E)$ is a TMIB of class $*-\dag$ on $\mathbb{R}^{d_1+d_2}$. The space $L^{p}_{\eta}(\mathbb{R}^{d_{2}},E')$ is a DTMIB of class $*-\dag$ if $1<p\leq \infty$ whenever $E'$ has the Radon-Nikodym property (in particular if $E$ is reflexive). Note that the mixed weighted Lebesgue spaces $L^{p,q}_{\eta_1\otimes\eta_{2}}(\RR^{d_1+d_2})$ arise in this way; in fact $L^{p,q}_{\eta_1\otimes\eta_{2}}(\RR^{d_1+d_2})= L^{q}_{\eta_2}(\mathbb{R}^{d_2},  L^{p}_{\eta_1}(\RR^{d_1}))$.
\end{remark}

\begin{remark}
In general, the completed tensor product of two $L^p$ spaces with respect to the $\pi$ topology does not have to be a solid Banach space (see \cite{F84} for the definition of a solid Banach space) even though its elements are all locally integrable functions. For example, when $1<p\leq 2$, $L^p(\RR^d)\hat{\otimes}_{\pi} L^p(\RR^d)$ is not a solid Banach space. This immediately follows from the following claim.

\smallskip

\noindent\textbf{Claim.} Let $1<p\leq 2$. There exists $F\in L^p(\RR^d)\hat{\otimes}_{\pi}L^p(\RR^d)$ such that $e^{2\pi i x\cdot \xi}F\not\in L^p(\RR^d)\hat{\otimes}_{\pi}L^p(\RR^d)$.

\smallskip

\begin{proof}
 Denote by $S$ the operator $f(x,\xi)\mapsto e^{2\pi i x\cdot \xi} f(x,\xi)$. Clearly it is a continuous operator on $\SSS(\RR^{2d})$ and on $\SSS'(\RR^{2d})$. Assume that for each $F\in L^p(\RR^d)\hat{\otimes}_{\pi}L^p(\RR^d)$ we have $SF\in L^p(\RR^d)\hat{\otimes}_{\pi}L^p(\RR^d)$. Observe that the mapping $F\mapsto SF$, $L^p(\RR^d)\hat{\otimes}_{\pi} L^p(\RR^d)\rightarrow \SSS'(\RR^{2d})$, is continuous since it decomposes as $\ds L^p(\RR^d)\hat{\otimes}_{\pi} L^p(\RR^d)\xrightarrow{\mathrm{Id}}\SSS'(\RR^{2d})\xrightarrow{S} \SSS'(\RR^{2d})$. Hence, the closed graph theorem implies that
\beq\label{113st}
S:L^p(\RR^d)\hat{\otimes}_{\pi} L^p(\RR^d)\rightarrow L^p(\RR^d)\hat{\otimes}_{\pi} L^p(\RR^d)\,\,\, \mbox{is continuous.}
\eeq
Consider the following bilinear functional on $L^p(\RR^d)\times L^p(\RR^d)$:
\beqs
B(f,g)=\int_{\RR^d}f(x)\mathcal{F}g(x)dx,\,\, L^p(\RR^d)\times L^p(\RR^d)\rightarrow \CC.
\eeqs
By applying the Hausdorff-Young inequality, one easily deduces that $B$ is continuous. Hence, $B$ naturally induces a continuous functional $B_1$ on $L^p(\RR^d)\hat{\otimes}_{\pi} L^p(\RR^d)$. As $\SSS(\RR^{2d})$ is canonically injected into $L^p(\RR^d)\hat{\otimes}_{\pi} L^p(\RR^d)$, its restriction $B_2$ is a continuous functional on $\SSS(\RR^{2d})$. Since the mapping
\beqs
\chi\mapsto \int_{\RR^{2d}}e^{-2\pi i x\cdot \xi}\chi(x,\xi)dxd\xi,\,\, \SSS(\RR^{2d})\rightarrow \CC,
\eeqs
is continuous and it coincides on $\SSS(\RR^d)\otimes\SSS(\RR^d)$ with $B_2$ it follows that $B_2$ is given by the above integral.\\
\indent Let $f(x)=f_1(x_1)f_2(x')$ (we denote $x=(x_1,x')$) where $0\neq f_2\in\DD(\RR^{d-1})$ is nonnegative and $f_1\in C^{\infty}(\RR)$ is given by $t\mapsto 1/t$ on $[2,\infty)$ and extended to be smooth and nonnegative with support in $[1,\infty)$. Let $\theta\in\DD(\RR)$ be such that $0\leq \theta\leq 1$, $\supp\theta\in[-2,2]$ and $\theta=1$ on $[-1,1]$. Define $\theta_n(t)=\theta(t/2^n)$ and denote $\varphi_n=(\theta_nf_1)\otimes f_2\in\DD(\RR^d)$, $n\in\ZZ_+$. Furthermore, pick a nonnegative $0\neq\psi\in\DD(\RR^d)$. Clearly $f\otimes \psi\in L^p(\RR^d)\hat{\otimes}_{\pi} L^p(\RR^d)$. We have
\beqs
B_1(S(\varphi_n\otimes\psi))&=&B_2(S(\varphi_n\otimes\psi))= \int_{\RR^{2d}}\varphi_n(x)\psi(\xi)dxd\xi\\
&=&\|\psi\|_{L^1(\RR^d)}\|f_2\|_{L^1(\RR^{d-1})} \int_{1}^{\infty}\theta_n(t)f_1(t)dt.
\eeqs
Since $\varphi_n\rightarrow f$ in $L^p(\RR^d)$, (\ref{113st}) implies that the left hand side tends to $B_1(S(f\otimes\psi))$. On the other hand, by monotone convergence, the right hand side tends to $\infty$.
\end{proof}
\end{remark}

\begin{remark}\label{TMrk4}
 In some cases, when $E$ and $F$ are classical spaces in analysis, $E\hat{\otimes}_{\pi} F$ or $E\hat{\otimes}_{\epsilon} F$ could happen to be a space of the same type. For example, if $\eta_1$ and $\eta_2$ are weight functions of class $\dag$ on $\RR^{d_1}$ and $\RR^{d_2}$ and $E=L^1_{\eta_1}(\RR^{d_1})$ and $F=L^1_{\eta_2}(\RR^{d_2})$, then $E\hat{\otimes}_{\pi} F$ is nothing else but $L^1_{\eta_1\otimes\eta_2}(\RR^{d_1+d_2})$. More generally \cite[Theorem 46.2, p. 473]{Treves}, we always have $L^{1}_{\eta_1}(\mathbb{R}^{d_1})\hat{\otimes}_{\pi} X= L^{1}_{\eta_1}(\mathbb{R}^{d_1},X)$ for $X$ an arbitrary Banach space. Similarly, if, in addition, $\eta_1$ and $\eta_2$ are continuous and $E=C_{\eta_1}(\RR^{d_1})$ and $F=C_{\eta_2}(\RR^{d_2})$, then $E\hat{\otimes}_{\epsilon} F$ is $C_{\eta_1\otimes\eta_2}(\RR^{d_1+d_2})$. On the other hand, it is important to notice that often this is not the case. To give an example, we recall the following result (see \cite[Theorem 4.21, p. 85]{ryan}). If $E$ and $F$ are reflexive Banach spaces such that either $E$ or $F$ satisfy the approximation property then the following statements are equivalent:
\begin{itemize}
\item[(i)] $E\hat{\otimes}_{\pi} F$ is reflexive;
\item[(ii)] the strong dual of $E\hat{\otimes}_{\pi} F$ is topologically isomorphic to $E'\hat{\otimes}_{\epsilon} F'$;
\item[(iii)] every $S\in\mathcal{L}(E,F')$ is compact;
\item[(iv)] $E'\hat{\otimes}_{\epsilon}F'$ is reflexive.
\end{itemize}
If we take $E=F=L^2(\RR^{d})$, then neither $L^2(\RR^d)\hat{\otimes}_{\epsilon} L^2(\RR^d)$ nor $L^2(\RR^d)\hat{\otimes}_{\pi} L^2(\RR^d)$ is isomorphic to $L^2(\RR^{2d})$, since, on the contrary, they would be reflexive and hence every continuous operator from $L^2(\RR^d)$ to $L^2(\RR^d)$ would have to be compact, which is obviously not true.

In fact, $L^2(\RR^d)\hat{\otimes}_{\pi}L^2(\RR^d)$ and $L^2(\RR^d)\hat{\otimes}_{\epsilon} L^2(\RR^d)$ are different from all $L^{p,q}(\RR^{2d})$, $1\leq p,q\leq\infty$. This should be clear for $1<p,q<\infty$ since, as seen above,  $L^2(\RR^d)\hat{\otimes}_{\pi}L^2(\RR^d)$ and $L^2(\RR^d)\hat{\otimes}_{\epsilon} L^2(\RR^d)$ are not reflexive. To see that they differ from $L^{p,1}(\RR^{2d})$, $1\leq p\leq\infty$, just take a function $f\in L^2(\RR^d)\backslash L^1(\RR^d)$ and any $\chi\in\DD(\RR^d)$. Then clearly $\chi\otimes f\in L^2(\RR^d)\otimes L^2(\RR^d)\subset L^2(\RR^d)\hat{\otimes}_{\pi} L^2 (\RR^d)\cap (L^2(\RR^d)\hat{\otimes}_{\epsilon} L^2(\RR^d))$, but $\chi\otimes f\not\in L^{p,1}(\RR^{2d})$. Similarly, for $L^{1,p}(\RR^{2d})$ and for $L^{\infty}(\RR^{2d})$.
\end{remark}

\begin{remark}\label{TMrk5}
If $M_p/p!$ and $A_{p}/p!$ are log-convex (i.e., they satisfy condition $(M.1)$), then there are TMIB of class $*-\dagger$ that are arbitrarily close to $\mathcal{S}^{*}_{\dagger}(\RR^{d})$ and $\mathcal{S}'^{*}_{\dagger}(\RR^{d})$ in the sense of  \eqref{TMeq3.8}, \eqref{TMeq3.10}, and \eqref{TMeq3.11} below. First of all, for $\lambda>0$, define
$$
\omega_{\lambda}(x)= \sum_{p=0}^{\infty} \frac{(\lambda |x|)^{p}}{M_p} \quad \mbox{and} \quad \nu_{\lambda}(x)= \sum_{p=0}^{\infty} \frac{(\lambda |x|)^{p}}{A_p}.
$$
It is well known \cite{Petzsche84} that $\ln \omega_{\lambda}$ and $\ln \nu_{\lambda}$  are subadditive under the assumption that $M_p/p!$ and $A_{p}/p!$ are log-convex. Moreover, using that $e^{M(\lambda |x|)}\leq \omega_{\lambda}(x)
\leq 2 e^{M(2\lambda |x|)}$ and  $e^{A(\lambda |x|)}\leq \nu_{\lambda}(x)
\leq 2e^{A(2\lambda |x|)}$, and the Chung-Chung-Kim characterization of the Gelfand-Shilov spaces \cite{Chung-C-K96}, we obtain

\begin{equation}
\label{TMeq3.8}
\SSS^{(M_p)}_{(A_p)}(\RR^d)=\lim_{\substack{\longleftarrow\\ \lambda \rightarrow \infty}}X_{\lambda}\quad\mbox{ and }\quad \SSS'^{(M_p)}_{(A_p)}(\RR^d)=\lim_{\substack{\longrightarrow\\ \lambda \rightarrow \infty}} X'_{\lambda}
\end{equation}
and
\begin{equation}
\label{TMeq3.9}
\SSS^{\{M_p\}}_{\{A_p\}}(\RR^d)=\lim_{\substack{\longrightarrow\\ \lambda \rightarrow 0}} X_{\lambda } \quad \mbox{and} \quad \SSS'^{\{M_p\}}_{\{A_p\}}(\RR^d)=\lim_{\substack{\longleftarrow\\ \lambda \rightarrow0}}X'_{\lambda}
\end{equation}
where the Banach spaces $X_{\lambda}$ and their strong dual $X'_{\lambda}$ are reflexive TMIB of class $(M_p)-(A_p)$ given  by
$$
X_{\lambda}= L^{2}_{\omega_{\lambda}}\cap \mathcal{F}L^{2}_{\nu_{\lambda}}, \quad \|\varphi\|_{X_{\lambda}}= \|\varphi\|_{L^{2}_{\omega_{\lambda}}}+ \|\mathcal{F}{\varphi}\|_{L^{2}_{\nu_{\lambda}}}.
$$

To obtain an analog of (\ref{TMeq3.8}) for the Roumieu case, namely, a representation of $\SSS^{\{M_p\}}_{\{A_p\}}(\RR^d)$ and $\SSS'^{\{M_p\}}_{\{A_p\}}(\RR^d)$ as the intersection and union of TMIB respectively, we consider the set $\mathfrak{R}$ of all unbounded non-decreasing sequences of positive real numbers. Given $(\lambda_j)\in\mathfrak{R}$, we set
$$
\omega_{(\lambda_j)}(x)= \sum_{p=0}^{\infty} \frac{|x|^{p}}{M_p\prod_{j=1}^{p}\lambda_{j}} \quad \mbox{and} \quad \nu_{(\lambda_{j})}(x)= \sum_{p=0}^{\infty} \frac{ |x|^{p}}{A_p\prod_{j=1}^{p}\lambda_{j}}
$$
and consider the Banach space $X_{(\lambda_j)}=L^{2}_{\omega_{(\lambda_j)}}\cap \mathcal{F}L^{2}_{\nu_{(\lambda_j)}}$. One also has that $\ln \omega_{(\lambda_j)}$ and $\ln \nu_{(\lambda_j)}$ are subadditive, so that $X_{(\lambda_j)}$ and $X'_{(\lambda_j)}$ are
reflexive TMIB of class $\{M_p\}-\{A_p\}$. Using \eqref{TMeq3.9} and reasoning as in \cite{Pilipovic}, we obtain, as l.c.s.,

\begin{equation}
\label{TMeq3.10}
\SSS^{\{M_p\}}_{\{A_p\}}(\RR^d)=\lim_{\substack{\longleftarrow\\(\lambda_j)\in\mathfrak{R}}}X_{(\lambda_j)},
\end{equation}
whence
\begin{equation}
\label{TMeq3.11}
\SSS'^{\{M_p\}}_{\{A_p\}}(\RR^d)=\bigcup_{(\lambda_j)\in\mathfrak{R}} X'_{(\lambda_j)}.
\end{equation}
\end{remark}

\begin{remark}
\label{TMrk6} Let $E$ be either a TMIB or a DTMIB of class $\ast-\dagger$. If $\nu_{E}$ is quasianalytic, that is, if
\begin{equation}
\label{weightquasieq}
\int_{\mathbb{R}^{d}} \frac{|\log \nu_{E}(\xi)|}{(1+|\xi|)^{d+1}} d\xi=\infty,
\end{equation}
then $E$ is not necessarily a Banach space in standard situation in the sense of \cite[Definition 1]{BF}. In fact, a necessary condition for a Banach space to be in standard situation (with respect to some `nice' Banach algebra) is to contain non-trivial compactly supported functions. The Banach space $E=L^{2}_{\nu}\cap \mathcal{F}L^{2}_{\nu}$ provides an instance of both a TMIB and a DTMIB that are not in standard situation. We also point out that in general the Wiener-Beurling algebra $A_{\nu_{E}}$ never satisfies the assumptions \cite[(A2) and (A4), p. 180]{BF} if (\refeq{weightquasieq}) holds, in view of Beurling's characterization of regularity  \cite{beurling1938}.

On the other hand, if $M_p$ is non-quasianalytic or $M_{p}=\emptyset$, one can show that $E$ is always in standard situation with respect some `nice' Banach algebra satisfying the assumptions $(A1)-(A5)$ from \cite{BF}. We leave the verification of this fact to the reader.
\end{remark}

\section{Generalization of the modulation spaces}\label{MS}
We introduce and study in this section a generalization of the classical modulation spaces. Throughout the rest of the article, we only work with the symmetric case of Gelfand-Shilov spaces, that is, we assume that $\ast=\dagger$, and we retain the notation $M_p$ for the common sequence $M_p=A_p$, which satisfies the same assumptions as before. Furthermore, to ease the notation, we  write $\SSS^*(\RR^d)=\SSS^*_*(\RR^d)$ (so that $\SSS^{\emptyset}(\mathbb{R})$ is the Schwartz space $\SSS(\mathbb{R}^{d})$). Instead of calling a TMIB of class $*-*$, we will simply say that it is of class $*$.
\subsection{Short-time Fourier transform}
\noindent As a preparation, we discuss in this subsection some properties of the short-time Fourier transform (STFT) on ultradistribution spaces. The STFT of an ultradistribution $f\in\mathcal{S}'^{*}(\mathbb{R}^d)$ with respect to the window function $g\in\mathcal{S}^{*}(\mathbb{R}^d)$ is
$$V_g f(x,\xi)=\langle f, \overline{M_{\xi} T_{x}g}\rangle,
\quad x,\xi\in\mathbb{R}^d.$$
It is clear that the function $V_gf (x,\xi)$ is smooth in the variables $(x,\xi)$. Moreover, it is well-known \cite{C-P-R-T,GZ1} (see also \cite[Subsection 2.3]{d-v-ind2018} for very short proofs of these facts) that the mapping
$
V_g: \mathcal{S}^{*} (\mathbb{R}^d)\rightarrow \mathcal{S}^{*}(\mathbb{R}^{2d})
$
is continuous and it extends to a continuous mapping $V_g:\mathcal{S}'^{*} (\mathbb{R}^d)\rightarrow \mathcal{S}'^{*}(\mathbb{R}^{2d})$. The adjoint mapping $V^*_g:\mathcal{S}^{*}(\mathbb{R}^{2d})\rightarrow \mathcal{S}^{*}(\mathbb{R}^{d})$ is defined as
$$V^*_g\Phi(t)= \int_{\RR^{2d}} \Phi(x,\xi)g(t-x)e^{2\pi i \xi \cdot t}d\xi dx
$$
and it extends continuously to a mapping $V^*_g:\mathcal{S}'^{*}(\mathbb{R}^{2d})\rightarrow \mathcal{S}'^{*}(\mathbb{R}^{d})$. The latter extension can be accomplished via a duality definition: $\langle V^*_g G,\varphi\rangle=\langle G, \overline{V_g\bar{\varphi}}\rangle$, for $G\in \mathcal{S}'^{*}(\mathbb{R}^{2d})$ and $\varphi\in \mathcal{S}^{*}(\mathbb{R}^{d})$. We note that it is trivial to verify that
\[
V_{g_{1}}^*V_{g_{2}}=(g_{1},g_{2})_{L^2(\mathbb{R}^{d})}\:\mathrm{Id}
\]
holds on $\mathcal{S}'^{*}(\mathbb{R}^d)$.

Let $E$ be either a TMIB or a DTMIB of class $*$ on $\mathbb{R}^{d}$. In the rest of this subsection we are interested in the action of the STFT on $E$. It is convenient to rewrite the STFT in a ``free $x$ variable way" as a convolution, that is,
$$
V_{g}f(\cdot, \xi)= \overline{\check{g}} \ast (M_{-\xi}f), \quad \xi\in\mathbb{R}^{d}.
$$
From this expression we see that if $f\in E$ then we can allow the window $g$ to be an element of the Beurling algebra $L^{1}_{\check{\omega}_{E}}$. Proposition \ref{propdoublemodule} and Corollary \ref{TMIcordualmodule} immediately yield the following simple but useful result.

\begin{proposition}\label{ocenaV_g} Let $f\in E$ and $g\in L^1_{\check{\omega}_E}$. Then, $V_{g}f(\cdot,\xi)\in E$ and
\[
\|V_gf(\cdot,\xi)\|_E\leq \nu_{E}(\xi)\|f\|_E\|g\|_{L^1_{\check{\omega}_E}},\quad \forall \xi\in\RR^d.
\]
Furthermore, $V_{g}f(\cdot,\xi)$ can be represented as the Pettis integral
\beq\label{TMeq4.2}
V_gf(\cdot,\xi)=\int_{\RR^d} \overline{g(t)} T_{-t}M_{-\xi}fdt, \quad \xi\in\RR^d.
\eeq
When $E$ is a TMIB, then \eqref{TMeq4.2} is actually a Bochner integral.
\end{proposition}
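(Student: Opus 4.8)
The plan is to reduce everything to the convolution module structure already encapsulated in Proposition \ref{propdoublemodule} (and its dual counterpart, Corollary \ref{TMIcordualmodule}), exploiting the convolution representation $V_g f(\cdot,\xi) = \overline{\check g} \ast (M_{-\xi} f)$ recorded just above the statement. The only genuine preliminary observation is that reflecting and conjugating $g$ turns an $L^1_{\check\omega_E}$ window into an $L^1_{\omega_E}$ convolutor, which is exactly the algebra over which $E$ is a convolution module. I would therefore begin by checking this norm identity.

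Concretely, writing $\check\omega_E(x) = \omega_E(-x)$, for $g \in L^1_{\check\omega_E}$ the reflected-conjugated window $\overline{\check g}(t) = \overline{g(-t)}$ satisfies
\[
\|\overline{\check g}\|_{L^1_{\omega_E}} = \int_{\RR^d} |g(-t)|\,\omega_E(t)\, dt = \int_{\RR^d} |g(s)|\,\omega_E(-s)\, ds = \|g\|_{L^1_{\check\omega_E}},
\]
after the measure-preserving substitution $s = -t$. Hence $\overline{\check g} \in L^1_{\omega_E}$. On the other hand, property (c) of Definition \ref{defmod} gives $M_{-\xi} f \in E$ with $\|M_{-\xi} f\|_E \leq \nu_E(\xi)\|f\|_E$. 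Feeding these two facts into the module estimate \eqref{convmodule} yields
\[
\|V_g f(\cdot,\xi)\|_E = \|\overline{\check g} \ast (M_{-\xi}f)\|_E \leq \|\overline{\check g}\|_{L^1_{\omega_E}}\,\|M_{-\xi}f\|_E \leq \nu_E(\xi)\,\|f\|_E\,\|g\|_{L^1_{\check\omega_E}},
\]
which is precisely the claimed bound and also shows $V_g f(\cdot,\xi) \in E$.

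For the integral representation I would invoke the Bochner integral formula \eqref{convint1} applied to the convolutor $\overline{\check g}$ and the vector $M_{-\xi}f \in E$, obtaining $\overline{\check g} \ast (M_{-\xi} f) = \int_{\RR^d} \overline{g(-y)}\, T_y M_{-\xi} f\, dy$; the substitution $t = -y$ then delivers \eqref{TMeq4.2} verbatim. Since \eqref{convint1} is a Bochner integral whenever $E$ is a TMIB, the last assertion is immediate. In the DTMIB case the same two displays go through with \eqref{convmodule} and \eqref{convint1} replaced by their weak* Pettis-integral versions furnished by Corollary \ref{TMIcordualmodule}, so that \eqref{TMeq4.2} holds as a Pettis integral.

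I expect no serious obstacle here: the argument is essentially a change of variables stitched onto the already-established module machinery. The only point demanding a little care is the bookkeeping of the reflection and complex conjugation in passing between $\check\omega_E$ and $\omega_E$ and in the substitution inside the (Bochner or Pettis) integral, where one must make sure the change of variables is legitimate at the level of $E$-valued, respectively weak*-measurable, integrands rather than merely scalar ones. This is guaranteed because $T_{-t}$ is bounded on $E$ with integrable operator norm against $\overline{g}$, and because $t \mapsto T_{-t}M_{-\xi}f$ inherits the (weak*) continuity of the mappings in \eqref{TMeq3.2}.
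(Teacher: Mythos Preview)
Your proof is correct and follows exactly the route the paper takes: the paper simply states that the result is an immediate consequence of the convolution identity $V_g f(\cdot,\xi)=\overline{\check g}\ast(M_{-\xi}f)$ together with Proposition~\ref{propdoublemodule} and Corollary~\ref{TMIcordualmodule}, and you have unpacked precisely these ingredients with the requisite change of variables.
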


\begin{remark}\label{kvsmrf157}
Assume $E$ is a TMIB. For $f$ in the Banach space $\bar{E}=\{u\in \SSS'^*(\RR^d)|\, \bar{u}\in E\}$ with norm $\|u\|_{\bar{E}}=\|\bar{u}\|_E$, one can naturally define $V_g f$ even when $g\in E'$ via $V_g f(x,\xi)=\overline{\langle \bar{f},M_{\xi}T_x g\rangle}$. Notice that $V_g f$ is continuous on $\RR^{2d}$ and
\[
|V_g f(x,\xi)|\leq \check{\omega}_{E}(x)\check{\nu}_{E}(\xi)\|\bar{f}\|_E\|g\|_{E'}.
\]
\end{remark}

\bigskip

We now consider the adjoint STFT of a function $G\in L^{1}_{\omega_{E}\otimes \check{\nu}_{E}}$, where $E$ is either a TMIB or a DTMIB of class $*$ on $\RR^{d}$. If $\Phi(x,\xi)$ is integrable with respect to $\xi$, denote its partial inverse Fourier transform with respect to the second variable as $\mathcal{F}^{-1}_{2}\Phi(x,z)=\int_{\mathbb{R}^{d}}\Phi(x,\xi) e^{2\pi i z\cdot \xi }d \xi$; the definition obviously extends to ultradistributions by duality.\\
\indent If $G\in L^1_{\omega_E\otimes\check{\nu}_E}$ then $\mathcal{F}^{-1}_2G(x,\cdot)\in A_{\nu_E}$ and for $g\in E$, we have
\beq\label{smvklr157}
\|(T_xg)(\mathcal{F}^{-1}_2G(x,\cdot))\|_E\leq \omega_E(x) \|g\|_E \int_{\mathbb{R}^{d}}|G(x,\xi)|\check{\nu}_{E}(\xi)d\xi.
\eeq
Assume $g\in\SSS^*(\RR^d)$ and $G\in\SSS^*(\RR^{2d})$. Then $x\mapsto T_x g$, $\RR^d\rightarrow E$, and $x\mapsto \mathcal{F}^{-1}_2G(x,\cdot)$, $\RR^d\rightarrow A_{\nu_E}$, are continuous and hence so is the mapping
\beq\label{ktvsrn157}
x\mapsto (T_xg)(\mathcal{F}^{-1}_2G(x,\cdot)),\quad \RR^d\rightarrow E.
\eeq
Now (\ref{smvklr157}) implies that the function (\ref{ktvsrn157}) is Bochner integrable and (cf. \cite[Lemma 3.7]{TIBU})
\begin{equation}\label{TMeq4.3}
V^{\ast}_{g}G= \int_{\mathbb{R}^{d}} (T_{x}g) (\mathcal{F}^{-1}_{2}G (x,\cdot))dx,
\end{equation}
as a Bochner integral. A density argument implies that (\ref{TMeq4.3}) remains valid as a Bochner integral for $G\in L^1_{\omega_E\otimes\check{\nu}_E}$; in fact, the function (\ref{ktvsrn157}) is a pointwise limit a.e. of continuous functions, hence strongly measurable, and (\ref{smvklr157}) gives its Bochner integrability. If $E$ is TMIB, then a similar density argument for $g$ yields that (\ref{TMeq4.3}) is also true even when $g\in E$ and $G\in L^1_{\omega_E\otimes\check{\nu}_E}$. Assume now $E=F'$ is a DTMIB with $F$ being a TMIB and $g\in E$. Because of Remark \ref{kvsmrf157}, we can define $V^*_g G\in \SSS'^*(\RR^d)$ by $\langle V^*_gG,\varphi\rangle=\langle G,\overline{V_g\bar{\varphi}}\rangle$, $\varphi\in\SSS^*(\RR^d)$, and $|\langle V^*_gG,\varphi\rangle|\leq \|G\|_{L^1_{\omega_E\otimes\check{\nu}_E}}\|g\|_E \|\varphi\|_F$, that is, $V^*_g G\in E$. We can find $g_n\in\SSS^*(\RR^d)$, $n\in\ZZ_+$, such that $g_n\rightarrow g$ weakly* in $E$ (see Corollary \ref{vsktjr157}). Notice that for each $f\in F$, $V_{g_n}\bar{f}\rightarrow V_g \bar{f}$ pointwise. Since $\{g_n\}$ is weakly bounded in $E$, the Banach-Steinhaus theorem implies that it is strongly bounded and dominated convergence yields $\langle V^*_{g_n}G,f\rangle\rightarrow \langle V^*_g G,f\rangle$. Hence we can evaluate (\ref{TMeq4.3}) at $f\in F$ with $g_n$ in place of $g$ and take the limit to conclude that (\ref{TMeq4.3}) remains true as a Pettis integral even when $g\in E$. We have therefore shown:

\begin{proposition}\label{V^*_g}
Let $G\in L_{\omega_{E}\otimes\check{\nu}_{E}}^1$ and $g\in E$. Then, $V^{\ast}_{g}G\in E$,
\[
\|V_g^*G\|_E\leq \|g\|_E\|G\|_{L^1_{\omega_{E}\otimes\check{\nu}_{E}}},
\]
and \eqref{TMeq4.3} holds as an $E$-valued Pettis integral. When $E$ is a TMBI, then \eqref{TMeq4.3} is actually a Bochner integral.
\end{proposition}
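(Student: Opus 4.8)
The plan is to reduce the statement to the pointwise norm estimate \eqref{smvklr157} and then integrate in $x$. First I would record that, for each fixed $x$, the partial inverse Fourier transform $\mathcal{F}^{-1}_{2}G(x,\cdot)$ belongs to the Wiener-Beurling algebra $A_{\nu_{E}}$ with $\|\mathcal{F}^{-1}_{2}G(x,\cdot)\|_{A_{\nu_{E}}}=\int_{\RR^d}|G(x,\xi)|\check{\nu}_{E}(\xi)\,d\xi$; this is immediate from $A_{\nu_E}=\mathcal{F}L^1_{\nu_E}$ together with the reflection identity $\mathcal{F}^{-1}_2G(x,\cdot)=\mathcal{F}(\check G(x,\cdot))$. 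Combining the multiplication bound \eqref{mult} with the translation bound $\|T_xg\|_E\le\omega_E(x)\|g\|_E$ from Proposition \ref{propdoublemodule} then yields \eqref{smvklr157}, and integrating \eqref{smvklr157} over $x$ gives
\[
\int_{\RR^d}\|(T_xg)(\mathcal{F}^{-1}_2G(x,\cdot))\|_E\,dx\le\|g\|_E\|G\|_{L^1_{\omega_E\otimes\check{\nu}_E}}.
\]
Hence, once the integrand is known to be strongly measurable, the integral \eqref{TMeq4.3} converges absolutely and the asserted bound $\|V^*_gG\|_E\le\|g\|_E\|G\|_{L^1_{\omega_E\otimes\check{\nu}_E}}$ follows.

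I would establish \eqref{TMeq4.3} first in the smooth case $g\in\SSS^*(\RR^d)$, $G\in\SSS^*(\RR^{2d})$. There both $x\mapsto T_xg\in E$ and $x\mapsto\mathcal{F}^{-1}_2G(x,\cdot)\in A_{\nu_E}$ are continuous (the former by \eqref{TMeq3.2}), so the product \eqref{ktvsrn157} is continuous, hence strongly measurable; \eqref{smvklr157} gives Bochner integrability, and testing against $\SSS'^*(\RR^d)$ --- using that the Bochner integral commutes with the continuous inclusion $E\hookrightarrow\SSS'^*(\RR^d)$ --- shows the resulting $E$-valued integral agrees with the defining formula for $V^*_gG$. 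I would then pass to general $G\in L^1_{\omega_E\otimes\check{\nu}_E}$ by density: approximating $G$ in $L^1_{\omega_E\otimes\check{\nu}_E}$ by smooth functions makes \eqref{ktvsrn157} an a.e.\ pointwise limit of continuous $E$-valued maps, hence strongly measurable, while \eqref{smvklr157} again gives Bochner integrability. When $E$ is a TMIB, a further density argument in the $g$ variable --- now using norm-density of $\SSS^*(\RR^d)$ in $E$ --- upgrades the identity to all $g\in E$, still as a genuine Bochner integral.

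The delicate case, which I expect to be the main obstacle, is a DTMIB $E=F'$, since then $\SSS^*(\RR^d)$ is only weak* dense in $E$ and the integral can only be read weak* as a Pettis integral. Here I would first \emph{define} $V^*_gG$ by the transpose formula $\langle V^*_gG,\varphi\rangle=\langle G,\overline{V_g\bar\varphi}\rangle$ for $\varphi\in F$, invoking Remark \ref{kvsmrf157} to make sense of $V_g\bar\varphi$ when $g\in E=F'$; the bound $|\langle V^*_gG,\varphi\rangle|\le\|G\|_{L^1_{\omega_E\otimes\check{\nu}_E}}\|g\|_E\|\varphi\|_F$ then places $V^*_gG$ in $E$ with the correct norm. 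To identify this functional with the Pettis integral \eqref{TMeq4.3}, I would take $g_n\in\SSS^*(\RR^d)$ with $g_n\to g$ weak* in $E$ (Corollary \ref{vsktjr157}), for which the smooth-case identity already holds, use the Banach-Steinhaus theorem to conclude $\{g_n\}$ is norm-bounded, and then pass to the limit by dominated convergence in $\langle V^*_{g_n}G,f\rangle\to\langle V^*_gG,f\rangle$ for each $f\in F$. The real difficulty is to arrange the measurability and limiting steps uniformly across all cases, in particular the quasianalytic one where no compactly supported test functions are available; the integral representations of Proposition \ref{propdoublemodule} and Corollary \ref{TMIcordualmodule} are precisely what allow this to go through without any localization.
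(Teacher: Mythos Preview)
Your proposal is correct and follows essentially the same route as the paper: establish \eqref{TMeq4.3} first for $g\in\SSS^*(\RR^d)$, $G\in\SSS^*(\RR^{2d})$ via continuity and \eqref{smvklr157}, extend to general $G$ by density (pointwise a.e.\ limits giving strong measurability), then to general $g\in E$ by norm density in the TMIB case and by weak* approximation plus Banach--Steinhaus and dominated convergence in the DTMIB case. The only slip is the phrase ``testing against $\SSS'^*(\RR^d)$''---you mean testing against $\varphi\in\SSS^*(\RR^d)$ to verify agreement as ultradistributions---but the argument is otherwise the paper's own.
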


\subsection{New modulation spaces}
Let $g\in\mathcal{S}^*(\mathbb{R}^d)\backslash\{0\}$. Let $F$ be either a TMIB or a DTMIB of class $*$ on $\mathbb{R}^{2d}$. We now define a new modulation space associated to $F$ as
\begin{equation*}
\mathcal{M}^{F}=\{f\in\mathcal{S}'^{*}(\mathbb{R}^{d})|\, V_gf\in F\}
\end{equation*}
provided with the norm $\|f\|_{\mathcal{M}^F}=\|V_gf\|_F$. Note first that
we always have $\mathcal S^*(\mathbb R^d)\subseteq \mathcal{M}^F$, so that this modulation space is non-trivial. We will show in Corollary \ref{ktrspv157} below that the definition of $\mathcal{M}^{F}$ does not depend on the chosen window $g$.

The spaces $\mathcal{M}^F$  are
generalizations of the well-known spaces
$\mathcal{M}^{L^{p,q}_\eta}=M^{p,q}_{\eta}(\mathbb{R}^{d})$  considered by many authors (cf.
\cite{F, fg11,fg22,gr, PilT,Toft2012}). Theorem \ref{EE'} and Remarks \ref{TMrk1}--\ref{TMrk5} provide examples of spaces $F$ that are TMIB and DTMIB, many of them differing from $L^{p,q}_\eta$.

\begin{proposition}\label{V^*}
Let $g_1,g_2\in \mathcal{S}^*(\mathbb{R}^d)$. Then
\begin{itemize}
\item [(i)]
\begin{equation}\label{ineq}
\|V_{g_1}V^*_{g_2}G\|_F\leq \|G\|_F\int_{\RR^{2d}}|V_{g_1}g_2(x,\xi)|\omega_F(x,\xi)\nu_F(\xi,0)dxd\xi.
\end{equation}
\item [(ii)]
The mappings
\begin{equation}\label{cont}
V_g^*:F\rightarrow \mathcal{M}^{F}\mbox{\,\,and\,\,}V_g:\mathcal{M}^{F}\rightarrow F,
\end{equation}
are continuous.
\end{itemize}
\end{proposition}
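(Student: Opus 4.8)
The plan is to prove both parts using the fundamental identity $V_{g_1}^* V_{g_2} = (g_1,g_2)_{L^2}\,\mathrm{Id}$ together with the covariance properties of the STFT and the module estimates from Proposition~\ref{propdoublemodule} (and its dual version Corollary~\ref{TMIcordualmodule}). The crux is that the composition $V_{g_1}V_{g_2}^*$ acts on $F$ as an integral operator whose kernel is a translate-modulate of $V_{g_1}g_2$; estimating this kernel against the weight functions $\omega_F$ and $\nu_F$ yields~\eqref{ineq}.

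First I would establish the key intertwining formula. A direct computation (using $V_g(M_\xi T_x f)$ and the definition of $V_g^*$) shows that for $G\in F$ one has the pointwise identity
\begin{equation*}
V_{g_1}(V_{g_2}^*G)(x,\xi)=\int_{\RR^{2d}} G(y,\eta)\, \overline{V_{g_1}g_2}\big((y,\eta)-(x,\xi)\big)\, e^{-2\pi i(x-y)\cdot\eta}\, dy\, d\eta,
\end{equation*}
so that $V_{g_1}V_{g_2}^*$ is realized by convolution-type averaging of $G$ against translates and modulations of $V_{g_1}g_2$. Concretely I would write $V_{g_1}(V_{g_2}^*G)$ as a superposition $\int_{\RR^{2d}} \overline{V_{g_1}g_2(x,\xi)}\,(M_{(\xi,0)}T_{(x,\xi)}G)\,dx\,d\xi$ of time-frequency shifts of $G$ applied inside $F$. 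Taking $F$-norms and pulling the norm inside the integral (via the Bochner/Pettis integral structure guaranteed by Proposition~\ref{propdoublemodule} and Corollary~\ref{TMIcordualmodule}), each shift contributes a factor $\|T_{(x,\xi)}\|_{\mathcal L(F)}\,\|M_{(\xi,0)}\|_{\mathcal L(F)}=\omega_F(x,\xi)\,\nu_F(\xi,0)$, which produces exactly the bound~\eqref{ineq}. Here I would invoke Proposition~\ref{V^*_g} to ensure that $V_{g_2}^*G\in F$ in the first place and that all integrals converge; the weight $\omega_F\otimes\check\nu_F$ appearing there matches the integrability needed because $V_{g_1}g_2\in\SSS^*(\RR^{2d})$ decays faster than any such weight.

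For part $(ii)$, continuity of $V_g:\mathcal M^F\to F$ is immediate from the definition of the norm on $\mathcal M^F$, since $\|V_gf\|_F=\|f\|_{\mathcal M^F}$ by construction. For continuity of $V_g^*:F\to\mathcal M^F$, I would note that by definition $\|V_g^*G\|_{\mathcal M^F}=\|V_g(V_g^*G)\|_F$, so applying part $(i)$ with $g_1=g_2=g$ gives $\|V_g^*G\|_{\mathcal M^F}\leq C_g\|G\|_F$, where $C_g=\int_{\RR^{2d}}|V_gg(x,\xi)|\,\omega_F(x,\xi)\,\nu_F(\xi,0)\,dx\,d\xi$. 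The finiteness of $C_g$ follows because $V_gg\in\SSS^*(\RR^{2d})$ decays subexponentially (faster than $e^{-M(\lambda|\cdot|)}$ for every $\lambda$ in the Roumieu case, for some $\lambda$ in the Beurling case), which dominates the growth bounds $\omega_F(x,\xi)\leq Ce^{M_\tau(|(x,\xi)|)}$ and $\nu_F(\xi,0)\leq Ce^{M_\tau(|\xi|)}$ from Definition~\ref{defmod}(c).

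The main obstacle I anticipate is justifying the interchange of the $F$-norm with the integral and the precise bookkeeping of the covariance factors. Rigorously, one must verify that the integrand $x\mapsto \overline{V_{g_1}g_2(x,\xi)}\,M_{(\xi,0)}T_{(x,\xi)}G$ defines a genuine Bochner integral (when $F$ is a TMIB) or Pettis integral (when $F$ is a DTMIB) valued in $F$; this requires checking strong measurability and the dominating estimate, which is where the schwartz-type decay of $V_{g_1}g_2$ against the subadditive weights enters decisively. Once that integral representation is established and the operator-norm factorization $\|M_{(\xi,0)}T_{(x,\xi)}\|_{\mathcal L(F)}\le\omega_F(x,\xi)\nu_F(\xi,0)$ is confirmed, the inequality~\eqref{ineq} and both continuity statements follow by the density and dominated-convergence arguments already used in the proofs of Proposition~\ref{propdoublemodule} and Proposition~\ref{V^*_g}.
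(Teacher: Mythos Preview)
Your approach is essentially the paper's own: express $V_{g_1}V_{g_2}^*G$ as an $F$-valued integral of time-frequency shifts of $G$ weighted by $V_{g_1}g_2$, then estimate via the operator norms of $T_{(x,\xi)}$ and the modulation, and deduce $(ii)$ from $(i)$ with $g_1=g_2=g$. Two small slips to fix in the bookkeeping: the correct representation (after the change of variables the paper uses) is $V_{g_1}V_{g_2}^*G=\int_{\RR^{2d}}V_{g_1}g_2(x,\xi)\,T_{(x,\xi)}M_{(-\xi,0)}G\,dx\,d\xi$ (no conjugate, and modulation $M_{(-\xi,0)}$ so that $\|M_{(-\xi,0)}\|_{\mathcal L(F)}=\nu_F(\xi,0)$ by the definition $\nu_F(\eta)=\|M_{-\eta}\|$); and your remark ``$V_{g_2}^*G\in F$'' is dimensionally off, since $V_{g_2}^*G$ lives on $\RR^d$ while $F$ lives on $\RR^{2d}$---the paper does not invoke Proposition~\ref{V^*_g} here at all but simply establishes the integral formula first for $G\in\SSS^*(\RR^{2d})$ and then extends by density (Bochner in the TMIB case, Pettis in the DTMIB case).
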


\begin{proof}
First we prove (i). Let $G\in \mathcal{S}^*(\mathbb{R}^{2d})$. A quick computation gives
\begin{equation*}
V_{g_1}V^*_{g_2} G(u,v)=
\int_{\mathbb{R}^{2d}}e^{-2\pi i(v-\xi)\cdot x}G(x,\xi)V_{g_1}g_2(u-x,v-\xi)dxd\xi,
\end{equation*}
or which amounts to the same,
\begin{align*}
V_{g_1}V^*_{g_2} G(u,v)
=\int_{\mathbb{R}^{2d}}(T_{(x,\xi)}M_{(-\xi,0)}G)(u,v)V_{g_1}g_2(x,\xi)dxd\xi.
\end{align*}
If $F$ is a TMIB of class $*$, using \cite[Lemma 3.7, p. 151]{TIBU} and the fact that $\mathcal{S}^{\ast}(\mathbb{R}^{2d})$ is dense in $F$, we obtain $V_{g_1}V^{\ast}_{g_2} G\in F$ and the  $F$-valued Bochner integral representation
\begin{equation}
\label{TMeq4.9}
V_{g_1}V^*_{g_2} G
=\int_{\mathbb{R}^{2d}}V_{g_1}g_2(x,\xi)T_{(x,\xi)}M_{(-\xi,0)}Gdxd\xi,
\end{equation}
which is now valid for all $G\in F$. Dually, $V_{g_1}V^{\ast}_{g_2} G\in F$ and \eqref{TMeq4.9} holds as a Pettis integral for all $G\in F$ if $F$ is a DTMIB of class  $*$. The estimate \eqref{ineq} is a direct consequence of \eqref{TMeq4.9}.

The continuity of the first mapping from part (ii) follows from (i) by choosing $g_1=g_2=g$, while the second mapping is continuous just by definition of $\mathcal{M}^{F}$.
\end{proof}

\begin{corollary}\label{ktrspv157}
The space $\mathcal{M}^{F}$ is independent of the window $g\in\mathcal{S}^*(\mathbb{R}^d)\backslash\{0\}$. Different windows induce equivalent norms on
$\mathcal{M}^{F}$.
\end{corollary}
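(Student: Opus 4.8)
The plan is to combine the reconstruction identity $V_{g_1}^*V_{g_2}=(g_1,g_2)_{L^2(\RR^d)}\,\mathrm{Id}$ on $\SSS'^*(\RR^d)$ recorded above with the boundedness statement of Proposition \ref{V^*}. Fix two windows $g,\gamma\in\SSS^*(\RR^d)\setminus\{0\}$; since Gelfand-Shilov functions lie in $L^2(\RR^d)$ and $g\neq 0$, the scalar $\|g\|_{L^2}^2=(g,g)_{L^2}$ is strictly positive. First I would take any $f\in\SSS'^*(\RR^d)$ with $V_gf\in F$ and invoke the special case $g_1=g_2=g$ of the identity to write $f=\|g\|_{L^2}^{-2}\,V_g^*V_gf$ in $\SSS'^*(\RR^d)$. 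Applying the STFT with window $\gamma$ and rearranging then gives
\beq\label{TMwinchange}
V_\gamma f=\|g\|_{L^2}^{-2}\,(V_\gamma V_g^*)(V_gf).
\eeq

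Next I would read off the membership and norm bound directly from Proposition \ref{V^*}$(i)$ with the choice $g_1=\gamma$, $g_2=g$. Since $V_gf\in F$ by hypothesis, the estimate \eqref{ineq} applied to the right-hand side of \eqref{TMwinchange} yields $V_\gamma f\in F$ together with
\beqs
\|V_\gamma f\|_F\leq C(\gamma,g)\,\|V_gf\|_F,\qquad C(\gamma,g)=\|g\|_{L^2}^{-2}\int_{\RR^{2d}}|V_\gamma g(x,\xi)|\,\omega_F(x,\xi)\,\nu_F(\xi,0)\,dx\,d\xi.
\eeqs
This says that $f$ lies in $\mathcal M^F$ when computed with the window $\gamma$, with its $\gamma$-norm dominated by its $g$-norm. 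Interchanging the roles of $g$ and $\gamma$ produces the reverse inequality with constant $C(g,\gamma)$, so the two norms are equivalent and both the space $\mathcal M^F$ and its topology are independent of the chosen window.

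The step I expect to require the most care (though it remains routine) is checking that $C(\gamma,g)$ is finite, which is exactly where the regularity of the windows enters. Because $\gamma,g\in\SSS^*(\RR^d)$, the continuity of the STFT on Gelfand-Shilov spaces gives $V_\gamma g\in\SSS^*(\RR^{2d})$, so $|V_\gamma g(x,\xi)|\leq C_\lambda e^{-M(\lambda|(x,\xi)|)}$; this holds for every $\lambda>0$ in the Beurling case and for some $\lambda>0$ in the Roumieu case. On the other hand, Definition \ref{defmod}$(c)$ bounds the weights by $\omega_F(x,\xi)\leq Ce^{M_\tau(|(x,\xi)|)}$ and $\nu_F(\xi,0)\leq Ce^{M_\tau(|\xi|)}$, with a fixed $\tau$ in the Beurling case and for every $\tau>0$ in the Roumieu case. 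Taking $\lambda$ large (Beurling) or $\tau$ small (Roumieu), and using the standard consequence $2M(\tau\rho)\leq M(H\tau\rho)+\ln c_0$ of condition $(M.2)$ together with $|\xi|\leq|(x,\xi)|$, one dominates the integrand by an integrable function, whence $C(\gamma,g)<\infty$. A final consistency point is that applying $V_\gamma$ to the $\SSS'^*$-level identity $f=\|g\|_{L^2}^{-2}V_g^*V_gf$ agrees with the $F$-level operator $V_\gamma V_g^*$ supplied by Proposition \ref{V^*}; this is immediate because $V_g^*$ and $V_\gamma$ are the canonical continuous extensions to the ultradistributional setting, and \eqref{TMeq4.9} represents $V_\gamma V_g^*$ through precisely the same action.
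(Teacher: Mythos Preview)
Your proof is correct and follows essentially the same route as the paper: both use the reconstruction identity $V_{g}^*V_{g}=\|g\|_{L^2}^{2}\,\mathrm{Id}$ to factor $V_{\gamma}f$ through $V_{\gamma}V_{g}^*$ acting on $V_{g}f$, and then invoke the bound \eqref{ineq} from Proposition~\ref{V^*}$(i)$. Your additional paragraph verifying $C(\gamma,g)<\infty$ is extra care that the paper leaves implicit, but the core argument is identical.
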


\begin{proof}
Let $g_1,g_2\in\mathcal{S}^*(\mathbb{R}^d)\backslash\{0\}$ be two different windows. Proposition \ref{V^*} implies
\begin{equation*}\label{eq1}
\|V_{g_1}f\|_{F}=\frac{\|V_{g_1}V_{g_2}^*V_{g_2}f\|_{F}}{\|g_2\|^{2}_{L^2}}\leq \frac{\|V_{g_2}f\|_{F}} {\|g_2\|^{2}_{L^2}} \int_{\RR^{2d}}|V_{g_1}g_2(x,\xi)|\omega_F(x,\xi){\nu}_F(\xi,0)dxd\xi.
\end{equation*}
Interchanging $g_1$ and $g_2$ yields the assertion.
\end{proof}

We also have,

\begin{corollary}\label{iso}
The space $\mathcal{M}^{F}$ is a Banach space with the norm $\|\cdot\|_{\mathcal{M}^{F}}=\|V_g(\cdot)\|_{ F}$. Furthermore, $V_g^*:F\rightarrow\mathcal{M}^{F}$ is a surjective topological homomorphism.
\end{corollary}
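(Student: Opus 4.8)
The plan is to show two things: completeness of $\mathcal{M}^{F}$, and that $V_{g}^{\ast}:F\to\mathcal{M}^{F}$ is a surjective topological homomorphism, both of which rest on the already-established reconstruction identity $V_{g}^{\ast}V_{g}=(g,g)_{L^{2}}\mathrm{Id}=\|g\|_{L^{2}}^{2}\mathrm{Id}$ on $\mathcal{S}'^{\ast}(\RR^{d})$ together with the continuity statements of Proposition~\ref{V^*}. Fix once and for all a window $g\in\mathcal{S}^{\ast}(\RR^{d})\setminus\{0\}$ and normalize so that $\|g\|_{L^2}=1$, so that $V_{g}^{\ast}V_{g}=\mathrm{Id}$ on $\mathcal{S}'^{\ast}(\RR^{d})$; the norm on $\mathcal{M}^{F}$ is then $\|f\|_{\mathcal{M}^{F}}=\|V_{g}f\|_{F}$.

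For completeness, I would argue as follows. The map $V_{g}:\mathcal{M}^{F}\to F$ is, by the very definition of the norm, an isometry onto its image $V_{g}(\mathcal{M}^{F})\subseteq F$. Hence it suffices to show that this image is a closed subspace of the Banach space $F$, since a closed subspace of a Banach space is complete and an isometric copy of a complete space is complete. To see closedness, suppose $V_{g}f_{n}\to\Phi$ in $F$ with $f_{n}\in\mathcal{M}^{F}$. By Proposition~\ref{V^*}(ii), $V_{g}^{\ast}:F\to\mathcal{M}^{F}\subseteq\mathcal{S}'^{\ast}(\RR^{d})$ is continuous, so $V_{g}^{\ast}V_{g}f_{n}=f_{n}$ converges in $\mathcal{M}^{F}$-norm, and in particular $f_{n}\to f:=V_{g}^{\ast}\Phi$ in $\mathcal{S}'^{\ast}(\RR^{d})$. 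Since $V_{g}:\mathcal{S}'^{\ast}(\RR^{d})\to\mathcal{S}'^{\ast}(\RR^{2d})$ is continuous, $V_{g}f_{n}\to V_{g}f$ in $\mathcal{S}'^{\ast}(\RR^{2d})$; comparing with $V_{g}f_{n}\to\Phi$ in $F\hookrightarrow\mathcal{S}'^{\ast}(\RR^{2d})$ forces $V_{g}f=\Phi\in F$, so $f\in\mathcal{M}^{F}$ and $\Phi\in V_{g}(\mathcal{M}^{F})$. Thus the image is closed and $\mathcal{M}^{F}$ is a Banach space.

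For the homomorphism statement, I would verify the three ingredients in turn. First, $V_{g}^{\ast}:F\to\mathcal{M}^{F}$ is continuous by Proposition~\ref{V^*}(ii). Second, it is surjective: given any $f\in\mathcal{M}^{F}$ we have $V_{g}f\in F$ and $V_{g}^{\ast}(V_{g}f)=f$, so $f$ lies in the range. Third, to see it is a topological homomorphism—equivalently, that it is open onto $\mathcal{M}^{F}$—I would exhibit a continuous right inverse, namely $V_{g}:\mathcal{M}^{F}\to F$, which satisfies $V_{g}^{\ast}\circ V_{g}=\mathrm{Id}_{\mathcal{M}^{F}}$. A continuous surjection between Banach spaces that admits a continuous right inverse is automatically a topological homomorphism (it is open), since the induced bijection $F/\ker V_{g}^{\ast}\to\mathcal{M}^{F}$ has continuous inverse given by the quotient of $V_{g}$. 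Alternatively one may simply invoke the open mapping theorem: $V_{g}^{\ast}$ is a continuous surjection between Banach spaces and is therefore open.

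The only genuinely delicate point—and where I would be most careful—is making sure the identity $V_{g}^{\ast}V_{g}=\mathrm{Id}$ is applied in the right topological setting and that the restriction/corestriction of the continuous maps from Proposition~\ref{V^*} land where claimed. Everything downstream is soft functional analysis (closed-subspace completeness and the open mapping theorem), so the real content is entirely borrowed from the reconstruction formula and Proposition~\ref{V^*}, which is exactly why this corollary follows so quickly. I would present the completeness argument and the homomorphism argument as two short paragraphs, keeping the normalization $\|g\|_{L^{2}}=1$ explicit so that the clean identity $V_{g}^{\ast}V_{g}=\mathrm{Id}$ can be used without trailing constants.
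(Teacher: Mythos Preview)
Your proof is correct and takes essentially the same approach as the paper: both arguments use the reconstruction identity $V_{g}^{\ast}V_{g}=\|g\|_{L^{2}}^{2}\,\mathrm{Id}$ together with the continuity of $V_{g}^{\ast}:F\to\mathcal{M}^{F}$ from Proposition~\ref{V^*} to show that a Cauchy sequence in $\mathcal{M}^{F}$ converges, and both invoke the open mapping theorem for the surjective topological homomorphism statement. Your ``closed image of an isometry'' packaging is slightly more elaborate than the paper's direct Cauchy sequence argument, but the content is the same.
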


\begin{proof}
Let $f_k$ be Cauchy sequence in $\mathcal{M}^{F}$. Then, the sequence $V_gf_k$ is Cauchy in the Banach space $F$ and converges to some $G$ in $F$.
Applying the continuity of the mapping $V^*$ given in \eqref{cont}, we obtain that $V_g^*V_gf_k\rightarrow V_g^*G$ in $\mathcal{M}^{F}$.
Hence, $f_k\rightarrow V^*_gG/\|g\|^{2}_{L^2}$ in $\mathcal{M}^{F}$. The last part is trivial (because of the open mapping theorem).
\end{proof}
\begin{corollary}\label{reflexivecor}
If $F$ is reflexive, then $\mathcal{M}^{F}$ is reflexive.
\end{corollary}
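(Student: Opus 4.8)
The plan is to exploit the surjective topological homomorphism $V_g^*:F\rightarrow\mathcal{M}^F$ established in Corollary \ref{iso}, which realizes $\mathcal{M}^F$ as a quotient-type image of the reflexive space $F$. The cleanest route is to produce an explicit complemented-subspace decomposition of $F$ and identify $\mathcal{M}^F$ with a closed complemented subspace of $F$; since closed subspaces and complemented subspaces of reflexive Banach spaces are reflexive, the conclusion follows immediately.

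Concretely, I would first fix the window $g$ with $\|g\|_{L^2}=1$ and consider the operator $P:=V_g V_g^*:F\rightarrow F$. Using the relation $V_g^*V_g=(g,g)_{L^2}\,\mathrm{Id}=\mathrm{Id}$ on $\mathcal{S}'^*(\RR^d)$ together with the continuity of both $V_g:\mathcal{M}^F\rightarrow F$ and $V_g^*:F\rightarrow\mathcal{M}^F$ from Proposition \ref{V^*}, one checks that $P$ is a bounded linear projection on $F$: indeed $P^2=V_g(V_g^*V_g)V_g^*=V_gV_g^*=P$. The range of $P$ is $V_g(\mathcal{M}^F)$, which is a closed subspace of $F$ (being the range of a bounded projection), and the map $V_g:\mathcal{M}^F\rightarrow V_g(\mathcal{M}^F)=PF$ is an isometric isomorphism by the very definition of the norm $\|f\|_{\mathcal{M}^F}=\|V_gf\|_F$, with inverse $V_g^*$ restricted to $PF$ (up to the normalization $\|g\|_{L^2}^2=1$). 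Thus $\mathcal{M}^F$ is isometrically isomorphic to the closed complemented subspace $PF$ of $F$.

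Finally I would invoke the standard functional-analytic fact that a closed subspace of a reflexive Banach space is reflexive (for instance via the characterization that the closed unit ball is weakly compact, or via Kakutani's theorem): since $F$ is reflexive and $PF$ is closed in $F$, the space $PF$ is reflexive, and therefore so is its isometric copy $\mathcal{M}^F$.

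The main obstacle is the verification that $V_g V_g^*$ is genuinely a bounded idempotent \emph{on $F$}, since the identity $V_g^*V_g=\mathrm{Id}$ is only asserted on $\mathcal{S}'^*(\RR^d)$; one must ensure that the composition makes sense at the level of $F$ and that it indeed lands back in $F$ with the correct algebraic relation. This is precisely where Proposition \ref{V^*} and Corollary \ref{iso} do the work: Proposition \ref{V^*}(ii) guarantees $V_g^*:F\to\mathcal{M}^F$ and $V_g:\mathcal{M}^F\to F$ are both continuous, so $P=V_gV_g^*$ is a well-defined bounded operator on $F$, and the identity $V_g^*V_g f=f$ for $f\in\mathcal{M}^F$ (which is just $V_g^*V_g=\|g\|_{L^2}^2\mathrm{Id}$ applied to elements of $\mathcal{M}^F\subseteq\mathcal{S}'^*(\RR^d)$) yields $P^2=P$. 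Once this idempotency is secured, the reflexivity transfer is routine.
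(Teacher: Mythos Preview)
Your argument is correct but takes a different route from the paper. The paper simply observes that, by Corollary~\ref{iso}, $V_g^*:F\to\mathcal{M}^F$ is a surjective topological homomorphism, so $\mathcal{M}^F$ is (isomorphic to) the quotient $F/\ker V_g^*$; since quotients of reflexive Banach spaces by closed subspaces are reflexive, the result follows in one line. You instead build the bounded idempotent $P=V_gV_g^*$ on $F$ and identify $\mathcal{M}^F$ isometrically with the complemented closed subspace $PF$, then invoke the fact that closed subspaces of reflexive spaces are reflexive. Both arguments are standard and essentially dual to one another (indeed $PF\cong F/\ker P=F/\ker V_g^*$), but your version extracts the additional structural information that $V_g(\mathcal{M}^F)$ is complemented in $F$, at the cost of a few more lines of verification. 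The paper's proof is shorter; yours is more explicit about the geometry.
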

\begin{proof} A quotient of a reflexive Banach space by a closed subspace is reflexive; the assertion is then a consequence of Corollary \ref{iso}.
\end{proof}

 We now obtain an important property of $\mathcal{M}^{F}$ when $F$ is a TMIB of class $\ast$. We need to introduce some notation. We define the Banach space $\check{F}_2$ consisting of all $f\in\SSS'^*(\RR^{2d})$ such that $\check{f}_2(x,\xi):=f(x,-\xi)\in F$ with norm $\|\check{f}_2\|_{\check{F}_2}=\|f\|_F$. Clearly, $\check{F}_2$ is again a TMIB of class $*$ on $\RR^{2d}$ and its dual is canonically isomorphic to $(F')\check{}_2$; we denote the latter space simply as $\check{F}_{2}'$. Note that Theorem 4.8 (iii) extends \cite[Theorem 4.9]{fg11} applied to the case of classical modulation spaces.

\begin{theorem} Let $F$ be a TMIB of class $\ast$ on $\mathbb{R}^{2d}$.
\begin{itemize}
\item [(i)] The space $\mathcal{M}^{F}$ is a TMIB of class $*$ on $\mathbb{R}^{d}$.
\item [(ii)] Regardless the particular choice of $g\in\SSS^*(\RR^d)\backslash\{0\}$ inducing the norm on  $\mathcal{M}^F$, we always have the inequalities
\beq\label{ktroiv113}
\omega_{\mathcal{M}^F}(x)\leq \omega_F(x,0)\nu_F(0,x) \quad \mbox{and} \quad  \nu_{\mathcal{M}^F}(x)\leq \omega_F(0,-x), \quad \forall x\in\RR^d.
\eeq
\item [(iii)] The strong dual of $\mathcal{M}^{F}$ is the Banach space $\mathcal{M}^{\check{F}_{2}'}$.
\end{itemize}
\end{theorem}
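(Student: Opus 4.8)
The plan is to establish the three parts in the natural order, building each on the STFT machinery assembled in Propositions \ref{ocenaV_g}, \ref{V^*_g}, and \ref{V^*}. For part $(i)$, I would verify the three conditions of Definition \ref{defmod} for $\mathcal{M}^F$. The inclusion $\mathcal{S}^*(\RR^d)\hookrightarrow \mathcal{M}^F$ follows from the continuity of $V_g:\mathcal{S}^*(\RR^d)\to\mathcal{S}^*(\RR^{2d})$ together with $\mathcal{S}^*(\RR^{2d})\hookrightarrow F$; density is obtained from the surjective topological homomorphism $V_g^*:F\to\mathcal{M}^F$ of Corollary \ref{iso} applied to the dense subspace $\mathcal{S}^*(\RR^{2d})$ of $F$, using that $V_g^*$ maps $\mathcal{S}^*(\RR^{2d})$ into $\mathcal{S}^*(\RR^d)$. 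The inclusion $\mathcal{M}^F\hookrightarrow \mathcal{S}'^*(\RR^d)$ I would derive from the inversion formula $V_g^*V_g=\|g\|_{L^2}^2\,\mathrm{Id}$ combined with the $\mathcal{S}'^*$-continuity of $V_g^*$ and the continuous inclusion $F\hookrightarrow \mathcal{S}'^*(\RR^{2d})$.

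For the invariance under translation and modulation, I would compute how $V_g$ intertwines these operators. The key covariance identities are $V_g(T_y f)(x,\xi)=e^{-2\pi i y\cdot\xi}\,V_gf(x-y,\xi)=e^{-2\pi i y\cdot\xi}\,(T_{(y,0)}V_gf)(x,\xi)$ and $V_g(M_\eta f)(x,\xi)=V_gf(x,\xi-\eta)=(T_{(0,\eta)}V_gf)(x,\xi)$. Thus $V_g\,T_y = M_{(-\xi\text{-factor})}\,T_{(y,0)}\,V_g$; more precisely, writing the phase factor as a modulation on $\RR^{2d}$, I get $V_g(T_y f)=M_{(0,-y)}T_{(y,0)}V_gf$ and $V_g(M_\eta f)=T_{(0,\eta)}V_gf$. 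Since $F$ is translation-modulation invariant on $\RR^{2d}$, the right-hand sides stay in $F$, giving property (b); taking $F$-norms and bounding the operator norms yields property (c) and simultaneously the explicit estimates \eqref{ktroiv113} of part $(ii)$. The bound for $\omega_{\mathcal{M}^F}$ picks up both $\|T_{(y,0)}\|_{\mathcal{L}(F)}=\omega_F(y,0)$ and $\|M_{(0,-y)}\|_{\mathcal{L}(F)}=\nu_F(0,y)$, while $\nu_{\mathcal{M}^F}$ picks up only $\|T_{(0,\eta)}\|_{\mathcal{L}(F)}=\omega_F(0,\eta)$, and the sign inside $\nu_F$ and $\omega_F$ comes from the convention $\nu_E(\xi)=\|M_{-\xi}\|$. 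Care with these signs and with verifying that the estimates are window-independent (which follows from Corollary \ref{ktrspv157}) is a routine but necessary bookkeeping task.

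For part $(iii)$, the approach is to use the topological isomorphism between $\mathcal{M}^F$ and a complemented (or quotient) realization inside $F$ provided by the projection $P=\|g\|_{L^2}^{-2}V_gV_g^*$ onto the range $V_g(\mathcal{M}^F)\subseteq F$. Since $V_g:\mathcal{M}^F\to F$ is an isometric embedding onto a closed subspace with bounded projection $P$, dualizing identifies $(\mathcal{M}^F)'$ with the range of the adjoint projection acting on $F'$. The task is then to show that this dual subspace is exactly $\mathcal{M}^{\check F_2'}$, i.e.\ that an element of $F'$ represents a bounded functional on $\mathcal{M}^F$ precisely when its STFT (with respect to a suitable window) lies in $\check F_2'$. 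I would compute the pairing $\langle f,h\rangle$ for $f\in\mathcal{M}^F$ and $h\in\mathcal{M}^{\check F_2'}$ via the STFT using the orthogonality relation $V_{g_1}^*V_{g_2}=(g_1,g_2)_{L^2}\mathrm{Id}$: writing $\langle f,h\rangle=\|g\|_{L^2}^{-2}\langle V_gf, V_{\bar g}\bar h\rangle_{F,F'}$ up to conjugations, and checking that the second-variable reflection is exactly what converts the $F'$-pairing into a $\check F_2'$-membership condition. The explicit identity $V_{\bar g}\bar h(x,\xi)=\overline{V_g h(x,-\xi)}$ (the reflection in the frequency variable) is what makes $\check F_2$ rather than $F'$ appear, and pinning down this conjugation-reflection relation so that the pairing is well defined, bounded, and exhausts all of $(\mathcal{M}^F)'$ is the main obstacle; the boundedness in one direction is immediate from Proposition \ref{V^*}, but surjectivity of the duality map requires the quotient/complementation structure from Corollary \ref{iso} together with the Hahn--Banach extension of a functional from the closed subspace $V_g(\mathcal{M}^F)$ to all of $F$.
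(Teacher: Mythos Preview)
Your proposal is correct and follows essentially the same route as the paper: the covariance identities $V_g(T_yf)=M_{(0,-y)}T_{(y,0)}V_gf$ and $V_g(M_\eta f)=T_{(0,\eta)}V_gf$ are exactly those used for $(i)$ and $(ii)$, and for $(iii)$ the paper likewise embeds $\mathcal{M}^F$ isometrically as a closed subspace of $F$ (after passing to $\check F_2$), computes the pairing via $\overline{V_g\bar\varphi}(x,\xi)=V_g\varphi(x,-\xi)$, and obtains surjectivity by a Hahn--Banach extension followed by the open mapping theorem. The only cosmetic difference is that the paper fixes a \emph{real} normalized window from the outset, which spares the $\bar g$ versus $g$ bookkeeping you flag, and it does not invoke the projection $P$ explicitly (Hahn--Banach alone suffices once the subspace is closed).
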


\begin{proof}
Clearly $\mathcal{S}^*(\mathbb{R}^d)\subseteq\mathcal{M}^{F}\subseteq \mathcal{S}'^{*}(\mathbb{R}^d)$. Since the diagram
\begin{center}
\begin{tikzpicture}
  \matrix (m) [matrix of math nodes,row sep={3em},column sep={3em}]
  {
   \SSS^*(\RR^{2d}) & F & \SSS'^*(\RR^{2d}) \\
   \SSS^*(\RR^d) & \mathcal{M}^F & \SSS'^*(\RR^d)\\};
   \draw[right hook->] (m-1-1) -- (m-1-2) node[midway,above]{$\mathrm{Id}$};
   \draw[right hook->] (m-1-2) -- (m-1-3) node[midway,above]{$\mathrm{Id}$};
   \draw[->] (m-2-1) -- (m-1-1) node[midway,left]{$V_g$};
   \draw[->] (m-2-1) -- (m-2-2) node[midway,above]{$\mathrm{Id}$};
   \draw[->] (m-2-2) -- (m-1-2) node[midway,left]{$V_g$};
   \draw[->] (m-2-2) -- (m-2-3) node[midway,above]{$\mathrm{Id}$};
   \draw[->] (m-2-3) -- (m-1-3) node[midway,left]{$V_g$};
\end{tikzpicture}
\end{center}
commutes and all vertical mappings are injective topological homomorphisms (topological isomorphisms onto their images), both inclusions on the bottom row are continuous. The density of these inclusions follows from the diagram, Proposition \ref{V^*} and the fact $V_g^*V_g=\|g\|^{2}_{L^2} \operatorname*{Id}$. So $\mathcal{M}^{F}$ satisfies condition (a) from Definition \ref{defmod}. For any $f\in\mathcal{S}'^{\ast}(\mathbb{R}^{d})$, we have
$$
V_{g}(T_x f)= M_{(0,-x)}T_{(x,0)} V_g f  \quad \mbox{and } \quad V_{g}(M_{\xi}f)= T_{(0,\xi)} V_{g}f,
$$
which yields condition (b) from Definition \ref{defmod} and the inequalities \eqref{ktroiv113} (so that (c) from Definition \ref{defmod} also holds). This shows (i) and (ii).

Let us now show (iii). From (i), we obtain $(\mathcal{M}^F)'\subseteq \SSS'^*(\RR^d)$. Let $g\in\SSS^*(\RR^d)$ be a real normalized window, that is, such that $\|g\|_{L^2}=1$; we employ the norm on $\mathcal{M}^F$ induced by $g$. Notice that, for $\varphi\in\SSS^*(\RR^d)$, we have $\overline{V_g\bar{\varphi}(x,\xi)}=V_g\varphi(x,-\xi)$, $x,\xi\in\RR^d$. Let $f\in \mathcal{M}^{\check{F}'_2}$. For arbitrary $\varphi\in \mathcal{S}^*(\mathbb{R}^d)$, we have
\begin{equation*}
\left|\langle f, \varphi\rangle\right|
=\left|\langle f,\overline{V^*_gV_g\bar{\varphi}}\rangle\right|
= \left|\langle V_g f,\overline{V_g\bar{\varphi}}\rangle\right|
 \leq
\|V_gf\|_{ \check{F}'_2}\|V_g\varphi(x,-\xi)\|_{\check{F}_2}= \|f\|_{\mathcal{M}^{\check{F}'_2}}\|\varphi\|_{\mathcal{M}^{F}}.
\end{equation*}
Hence, $f\in(\mathcal{M}^{F})'$. Clearly, this mapping from $\mathcal{M}^{\check{F}'_2}$ to $(\mathcal{M}^F)'$ is a continuous injection. Conversely, let $f\in\left(\mathcal{M}^{F}\right)'\subseteq \mathcal{S}'^{\ast}(\mathbb{R}^{d})$. Since $\mathcal{M}^{F}$ is isometrically isomorphic to the closed subspace $V_g(\mathcal{M}^F)$ of $F$ and $A=\{u\in \check{F}_2|\, u(x,-\xi)\in V_g(\mathcal{M}^F)\}$ is a closed subspace of $\check{F}_2$, we can define a continuous functional $S$ on $A$ by $\langle S,V_g\chi(x,-\xi)\rangle=\langle S,\overline{V_g\overline{\chi}}\rangle:=\langle f,\chi \rangle$, $\chi\in \mathcal{M}^F$. We extend this functional to $\check{F}_2$ via the Hahn-Banach theorem, and denote it by $\tilde{f}\in \check{F}'_2\subseteq\SSS'^*(\RR^{2d})$. For $\varphi\in\SSS^*(\RR^d)$, we have
\beqs
\langle f,\varphi \rangle=\langle \tilde{f},\overline{V_g\bar{\varphi}}\rangle=\langle V^*_g\tilde{f},\varphi\rangle.
\eeqs
Clearly, $V^*_g\tilde{f}\in \mathcal{M}^{\check{F}'_2}$ (see Proposition \ref{V^*} (ii)) and $f$ is given by $\varphi\mapsto \langle V^*_g\tilde{f},\varphi\rangle$, $\mathcal{M}^F\rightarrow \CC$. Hence, $f=V^*_g\tilde{f}$ which shows that the mapping $\mathcal{M}^{\check{F}'_2}\rightarrow (\mathcal{M}^F)'$ is bijective. Thus, (iii) now follows from the open mapping theorem.
\end{proof}

We obtain the following corollary.

\begin{corollary}\label{dualm} If $F$ is a DTMIB of class $\ast$ on $\mathbb{R}^{2d}$, then $\mathcal{M}^{F}$ is a DTMIB of class $\ast$ on $\mathbb{R}^{d}$.
\end{corollary}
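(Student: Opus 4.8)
The plan is to exploit the preceding theorem, which already handles the TMIB case completely: part $(i)$ shows $\mathcal{M}^T$ is a TMIB of class $\ast$ on $\RR^d$ whenever $T$ is a TMIB on $\RR^{2d}$, and part $(iii)$ identifies its strong dual as $\mathcal{M}^{\check{T}'_2}$. Since by definition $\mathcal{M}^F$ is a DTMIB precisely when it is the strong dual of some TMIB, it suffices to realize $\mathcal{M}^F$ as $(\mathcal{M}^{T})'$ for a suitable TMIB $T$ on $\RR^{2d}$. First I would use that $F$ is a DTMIB of class $\ast$ to write $F=G'$, where $G$ is a TMIB of class $\ast$ on $\RR^{2d}$.

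The candidate is the second-variable reflection $\check{G}_2$. As recorded just before the theorem, $\check{G}_2$ is again a TMIB of class $\ast$ on $\RR^{2d}$, so by part $(i)$ the space $\mathcal{M}^{\check{G}_2}$ is a TMIB of class $\ast$ on $\RR^d$. I would then apply part $(iii)$ to the TMIB $\check{G}_2$, which gives $(\mathcal{M}^{\check{G}_2})'=\mathcal{M}^{(\check{G}_2)'\check{}_2}$.

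The computation is now pure bookkeeping with the reflection. The same remark preceding the theorem states that the dual of a second-variable reflection is canonically isomorphic to the reflection of the dual, so $(\check{G}_2)'\cong(G')\check{}_2=\check{F}_2$, using $F=G'$. Since $\check{\cdot}_2$ is an involution, applying it once more yields $(\check{F}_2)\check{}_2=F$. Combining these, $(\mathcal{M}^{\check{G}_2})'=\mathcal{M}^{(\check{F}_2)\check{}_2}=\mathcal{M}^{F}$. Hence $\mathcal{M}^{F}$ is the strong dual of the TMIB $\mathcal{M}^{\check{G}_2}$, and is therefore a DTMIB of class $\ast$ on $\RR^d$, which is the assertion.

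The only point requiring genuine care, rather than formal manipulation, is that the canonical isomorphism $(\check{G}_2)'\cong\check{F}_2$ be compatible with the ambient identifications inside $\SSS'^*(\RR^{2d})$ through which all the spaces $\mathcal{M}^{\bullet}$ are defined; that is, one must check that these successive dualizations and reflections agree as subspaces of ultradistributions and not merely as abstractly isometric Banach spaces, so that the final equality $(\mathcal{M}^{\check{G}_2})'=\mathcal{M}^{F}$ is a bona fide topological identification. I expect this to be the main (though routine) obstacle, and it follows because every map involved—the reflection $\check{\cdot}_2$, the duality from the theorem, and the embeddings $\mathcal{M}^{\bullet}\hookrightarrow\SSS'^*$—is canonical.
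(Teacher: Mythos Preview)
Your proposal is correct and is precisely the argument the paper intends: the corollary is stated without proof because it follows immediately from parts $(i)$ and $(iii)$ of the preceding theorem by writing $F=G'$, applying $(iii)$ to the TMIB $\check{G}_2$, and using the involutivity of $\check{\cdot}_2$ together with the canonical identification $(\check{G}_2)'=(G')\check{}_2$ recorded just before the theorem. Your remark about compatibility of the identifications inside $\SSS'^*(\RR^{2d})$ is the right caveat, and it is indeed routine for the reason you give.
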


We now apply Proposition \ref{V^*_g} to show the minimality and maximality of the modulation spaces of the form $\mathcal{M}^{L^{1}_{\omega\otimes\nu}}$ and $\mathcal{M}^{L^{\infty}_{(1/\omega)\otimes(1/\nu)}}$.

\begin{proposition}\label{ktrisv157}
Let $E$ be a TMIB of class $*$ on $\mathbb{R}^d$, and write $\omega=\omega_{E}$ and $\nu=\nu_{E}$. Then,
$$
\mathcal{M}^{L_{\omega\otimes\check{\nu}}^1}\hookrightarrow E  \quad \mbox{ and } \quad \mathcal{M}^{L_{\check{\omega}\otimes\check{\nu}}^1}\rightarrow E' \to \mathcal{M}^{L_{(1/\omega)\otimes(1/\nu)}^{\infty}}.
$$
In particular, if $E$ is reflexive
$$
\mathcal{M}^{L_{\omega\otimes\check{\nu}}^1}\hookrightarrow E \to \mathcal{M}^{L_{(1/\check{\omega})\otimes(1/\nu)}^{\infty}}.
$$
\end{proposition}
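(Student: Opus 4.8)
The plan is to isolate two general building blocks, each valid for an arbitrary TMIB \emph{or} DTMIB $X$ of class $*$ with weight functions $\omega_X,\nu_X$, and then to read off all four claimed inclusions by specializing $X$ to $E$ or to $E'$ while tracking the weight identities under duality. I will fix one window $g\in\SSS^*(\RR^d)\backslash\{0\}$ to induce all the $\mathcal{M}$-norms; by Corollary \ref{ktrspv157} the resulting spaces are independent of this choice. The two blocks are: (A) $\mathcal{M}^{L^1_{\omega_X\otimes\check{\nu}_X}}\to X$ is continuous; and (B) $X'\to\mathcal{M}^{L^\infty_{(1/\omega_X)\otimes(1/\nu_X)}}$ is continuous. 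Here I use that for a DTMIB $X=Y'$ the weights read $\omega_X=\check{\omega}_Y$ and $\nu_X=\nu_Y$, and that in both cases $\SSS^*(\RR^d)\subseteq X$ (for a TMIB this is Definition \ref{defmod}(a); for a DTMIB $X=Y'$ it follows by dualizing the dense inclusion $Y\hookrightarrow\SSS'^*(\RR^d)$).

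For (A) I would invoke the reconstruction identity $V_g^*V_g=\|g\|_{L^2}^2\,\mathrm{Id}$ on $\SSS'^*(\RR^d)$. If $f\in\mathcal{M}^{L^1_{\omega_X\otimes\check{\nu}_X}}$, then $G:=V_gf\in L^1_{\omega_X\otimes\check{\nu}_X}$, and since $g\in X$, Proposition \ref{V^*_g} (stated for $X$ either a TMIB or a DTMIB) gives $V_g^*G\in X$ with $\|V_g^*G\|_X\le\|g\|_X\|G\|_{L^1_{\omega_X\otimes\check{\nu}_X}}$. Hence $f=\|g\|_{L^2}^{-2}V_g^*V_gf\in X$ with $\|f\|_X\le\|g\|_X\|g\|_{L^2}^{-2}\|f\|_{\mathcal{M}^{L^1_{\omega_X\otimes\check{\nu}_X}}}$, which is the continuity in (A). When $X$ is a TMIB the inclusion is moreover dense, since $\SSS^*(\RR^d)\subseteq\mathcal{M}^{L^1_{\omega_X\otimes\check{\nu}_X}}\subseteq X$ and $\SSS^*(\RR^d)$ is already dense in $X$ by Definition \ref{defmod}(a).

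For (B) I would estimate the STFT directly by duality. For $f\in X'$ and $g\in\SSS^*(\RR^d)\subseteq X$, using $\overline{M_\xi T_x g}=M_{-\xi}T_x\bar{g}$,
$$
V_gf(x,\xi)=\langle f,\overline{M_\xi T_x g}\rangle=\langle f,M_{-\xi}T_x\bar{g}\rangle,
$$
so that $|V_gf(x,\xi)|\le\|f\|_{X'}\|M_{-\xi}T_x\bar{g}\|_X\le\omega_X(x)\nu_X(\xi)\|\bar{g}\|_X\|f\|_{X'}$, where I used $\|T_x\|_{\mathcal{L}(X)}=\omega_X(x)$ and $\|M_{-\xi}\|_{\mathcal{L}(X)}=\nu_X(\xi)$ from Definition \ref{defmod}(c); this is precisely the pointwise bound recorded in Remark \ref{kvsmrf157}. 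Thus $V_gf\in L^\infty_{(1/\omega_X)\otimes(1/\nu_X)}$ with norm $\le\|\bar{g}\|_X\|f\|_{X'}$, i.e. $f\in\mathcal{M}^{L^\infty_{(1/\omega_X)\otimes(1/\nu_X)}}$ with controlled norm. That the target is a genuine Banach space follows from Remark \ref{TMk2} once one checks that $1/\omega_X$ and $1/\nu_X$ are weight functions of class $*$, which is immediate from the submultiplicativity of $\omega_X,\nu_X$ together with their growth bound in \eqref{omega} (giving $\omega_X(x)/\omega_X(x+\xi)\le\omega_X(-\xi)\le Ce^{A_\tau(|\xi|)}$, and likewise for $\nu_X$).

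Finally I assemble the statement. Taking $X=E$ in (A) yields $\mathcal{M}^{L^1_{\omega\otimes\check{\nu}}}\hookrightarrow E$ (dense, since $E$ is a TMIB). Taking $X=E'$ in (A), where $\omega_{E'}=\check{\omega}$ and $\check{\nu}_{E'}=\check{\nu}$, yields $\mathcal{M}^{L^1_{\check{\omega}\otimes\check{\nu}}}\to E'$. Taking $X=E$ in (B) yields $E'\to\mathcal{M}^{L^\infty_{(1/\omega)\otimes(1/\nu)}}$. For the reflexive case, if $E$ is reflexive then so is $E'$, and a reflexive DTMIB is in fact a TMIB; hence $E'$ is a reflexive TMIB with $\omega_{E'}=\check{\omega}$, $\nu_{E'}=\nu$, and $(E')'=E$, so applying (B) to $X=E'$ gives $E\to\mathcal{M}^{L^\infty_{(1/\check{\omega})\otimes(1/\nu)}}$, which together with $\mathcal{M}^{L^1_{\omega\otimes\check{\nu}}}\hookrightarrow E$ completes the reflexive chain. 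The only genuinely delicate points are the correct bookkeeping of the duality weight identities $\omega_{E'}=\check{\omega}$, $\nu_{E'}=\nu$ and the passage ``reflexive DTMIB $=$ TMIB''; everything else reduces to a single application of Proposition \ref{V^*_g} or of the elementary duality bound above.
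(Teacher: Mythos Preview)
Your proposal is correct and follows essentially the same approach as the paper: both use Proposition \ref{V^*_g} together with the inversion formula $V_g^*V_g=\|g\|_{L^2}^2\,\mathrm{Id}$ to obtain the $L^1$-type inclusions, and both use the elementary pointwise duality bound $|V_gf(x,\xi)|\le\omega(x)\nu(\xi)\|f\|_{E'}\|g\|_E$ for the $L^\infty$-type inclusion, with the reflexive case handled by swapping the roles of $E$ and $E'$. Your organization into abstract blocks (A) and (B) and your explicit check that $1/\omega_X$, $1/\nu_X$ are admissible weights are more detailed than the paper's brief argument, but the substance is the same.
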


\begin{proof} The continuous inclusions relations $\mathcal{M}^{L_{\omega\otimes\check{\nu}}^1}\to E  $ and $\mathcal{M}^{L_{\check{\omega}\otimes\check{\nu}}^1}\rightarrow E'$ follow from Proposition \ref{V^*_g} and the inversion formula $V^*_gV_g = \|g\|^{2}_{L^{2}}\mathrm{Id}$. The density of $\mathcal{S}^*(\mathbb{R}^d)$ in $E$ implies that $\mathcal{M}^{L_{\omega\otimes\check{\nu}}^1}$ is dense in $E$. Let $f\in E'$, then (we choose a real valued $g$)
$$
|V_{g}f(x,\xi)|= |\langle M_{-\xi} f, T_{x}g\rangle|\leq \nu(\xi) \omega(x) \|f\|_{E'}\|g\|_{E},
$$
which yields the continuous inclusion $E' \to \mathcal{M}^{L_{(1/\omega)\otimes(1/\nu)}^{\infty}}$.
\end{proof}

It is worth pointing out that minimality results for certain translation invariant Banach spaces of (Radon) measures on locally compact Abelian groups were obtained by Feichtinger in \cite{F81} and \cite[Proposition 5]{F90} under non-quasianalyticity type assumptions. A method (based on Gabor expansions) to establish the minimality of  the modulation spaces of type $\mathcal{M}^{L^{1}_{\eta}(\mathbb{R}^{2d})}$ for a rather general class of translation-modulation invariant Banach spaces is discussed in \cite[pp. 251--254]{gr}, see particularly \cite[Theorem 12.1.9]{gr}.

We now specialize Proposition \ref{ktrisv157} to modulation spaces. Let $F$ be either a TMIB or a DTMIB of class $*$ on $\RR^{2d}$. We write $\tilde{\omega}_{F}$ and $\tilde{\nu}_{F}$ for the functions
$$
\tilde{\omega}_{F}(x)= \omega_{F}(x,0)\nu_{F}(0,x) \quad \mbox{and} \quad \tilde{\nu}_{F}(x)= \omega_F(0,-x), \quad x\in \mathbb{R}^{d}.
$$
Because of (\ref{ktroiv113}), $\tilde{\omega}_{F}\geq \omega_{\mathcal{M}^{F}}$ and
$\tilde{\nu}_{F}\geq \nu_{\mathcal{M}^{F}}$. Proposition \ref{ktrisv157} implies the following result.

\begin{corollary}
Let $F$ be a Banach space of distributions on $\mathbb{R}^{2d}$.
\begin{itemize}
\item [(i)] If $F$ is a TMIB of class $*$, then
$
\mathcal{M}^{L_{\tilde{\omega}_{F}\otimes\check{\tilde{\nu}}_{F}}^1} \hookrightarrow \mathcal{M}^{F}.
$
\item [(ii)] If $F$ is a DTMIB of class $*$, then
$
\mathcal{M}^{L_{\tilde{\omega}_{F}\otimes\check{\tilde{\nu}}_{F}}^1} \rightarrow \mathcal{M}^{F} \rightarrow \mathcal{M}^{L_{(1/\check{\tilde{\omega}}_{F})\otimes(1/\tilde{\nu}_{F})}^{\infty}}.
$
\item [(iii)] If $F$ is a reflexive TMIB of class $*$, then
$
\mathcal{M}^{L_{\tilde{\omega}_{F}\otimes\check{\tilde{\nu}}_{F}}^1} \hookrightarrow \mathcal{M}^{F} \rightarrow \mathcal{M}^{L_{(1/\check{\tilde{\omega}}_{F})\otimes(1/\tilde{\nu}_{F})}^{\infty}}.
$

\end{itemize}

\end{corollary}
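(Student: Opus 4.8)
The plan is to derive all three inclusions from Proposition~\ref{ktrisv157} applied to the base space $E=\mathcal{M}^{F}$, combined with two elementary monotonicity principles. First, the assignment $G\mapsto \mathcal{M}^{G}$ is monotone: if $G_1\hookrightarrow G_2$ is a continuous inclusion of TMIB or DTMIB of class $*$ on $\RR^{2d}$, then for $f\in\mathcal{M}^{G_1}$ one has $V_gf\in G_1\subseteq G_2$, so that $\mathcal{M}^{G_1}\to\mathcal{M}^{G_2}$ continuously with $\|f\|_{\mathcal{M}^{G_2}}=\|V_gf\|_{G_2}\leq C\|V_gf\|_{G_1}=C\|f\|_{\mathcal{M}^{G_1}}$; moreover this inclusion is dense whenever $\SSS^{*}(\RR^d)$ is dense in $\mathcal{M}^{G_2}$, since $\SSS^{*}(\RR^d)\subseteq\mathcal{M}^{G_1}$ always holds. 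Second, for weighted Lebesgue spaces a pointwise inequality $\eta_1\geq\eta_2$ yields the continuous inclusions $L^{1}_{\eta_1}\subseteq L^{1}_{\eta_2}$ and $L^{\infty}_{\eta_1}\subseteq L^{\infty}_{\eta_2}$, so that larger weights produce smaller spaces and, through monotonicity, smaller modulation spaces. Throughout I will invoke the inequalities $\tilde{\omega}_{F}\geq \omega_{\mathcal{M}^{F}}$ and $\tilde{\nu}_{F}\geq \nu_{\mathcal{M}^{F}}$ recorded above as a consequence of \eqref{ktroiv113}, which also hold in the DTMIB case because the identities $V_g(T_xf)=M_{(0,-x)}T_{(x,0)}V_gf$ and $V_g(M_\xi f)=T_{(0,\xi)}V_gf$ are valid for every $f\in\SSS'^{*}(\RR^{d})$.

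For $(i)$, since $F$ is a TMIB the space $\mathcal{M}^{F}$ is a TMIB of class $*$ on $\RR^{d}$, so Proposition~\ref{ktrisv157} gives $\mathcal{M}^{L^{1}_{\omega_{\mathcal{M}^F}\otimes\check{\nu}_{\mathcal{M}^F}}}\hookrightarrow\mathcal{M}^{F}$. The weight inequalities yield $\tilde{\omega}_{F}\otimes\check{\tilde{\nu}}_{F}\geq\omega_{\mathcal{M}^F}\otimes\check{\nu}_{\mathcal{M}^F}$ pointwise, hence by monotonicity $\mathcal{M}^{L^{1}_{\tilde{\omega}_{F}\otimes\check{\tilde{\nu}}_{F}}}\to\mathcal{M}^{L^{1}_{\omega_{\mathcal{M}^F}\otimes\check{\nu}_{\mathcal{M}^F}}}$, and composition gives the claimed inclusion; density follows from $\SSS^{*}(\RR^{d})\subseteq\mathcal{M}^{L^{1}_{\tilde{\omega}_{F}\otimes\check{\tilde{\nu}}_{F}}}$ and the density of $\SSS^{*}(\RR^{d})$ in the TMIB $\mathcal{M}^{F}$. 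Statement $(iii)$ is handled in the same spirit: if $F$ is a reflexive TMIB, then $\mathcal{M}^{F}$ is a reflexive TMIB by Corollary~\ref{reflexivecor}, so the reflexive case of Proposition~\ref{ktrisv157} yields $\mathcal{M}^{L^{1}_{\omega_{\mathcal{M}^F}\otimes\check{\nu}_{\mathcal{M}^F}}}\hookrightarrow\mathcal{M}^{F}\to\mathcal{M}^{L^{\infty}_{(1/\check{\omega}_{\mathcal{M}^F})\otimes(1/\nu_{\mathcal{M}^F})}}$. The left inclusion is treated exactly as in $(i)$; for the right one, $\tilde{\omega}_{F}\geq\omega_{\mathcal{M}^F}$ and $\tilde{\nu}_{F}\geq\nu_{\mathcal{M}^F}$ give $(1/\check{\tilde{\omega}}_{F})\otimes(1/\tilde{\nu}_{F})\leq(1/\check{\omega}_{\mathcal{M}^F})\otimes(1/\nu_{\mathcal{M}^F})$, whence $L^{\infty}_{(1/\check{\omega}_{\mathcal{M}^F})\otimes(1/\nu_{\mathcal{M}^F})}\subseteq L^{\infty}_{(1/\check{\tilde{\omega}}_{F})\otimes(1/\tilde{\nu}_{F})}$ and monotonicity closes the chain.

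Statement $(ii)$ requires the duality bookkeeping, which is the one genuinely delicate point. When $F$ is a DTMIB, $\mathcal{M}^{F}$ is a DTMIB by Corollary~\ref{dualm}, so $\mathcal{M}^{F}=E'$ for some TMIB $E$. Using the duality relations $\omega_{E'}=\check{\omega}_{E}$ and $\nu_{E'}=\nu_{E}$ recorded before Corollary~\ref{TMIcordualmodule}, I obtain $\omega_{\mathcal{M}^F}=\check{\omega}_{E}$ and $\nu_{\mathcal{M}^F}=\nu_{E}$, equivalently $\omega_{E}=\check{\omega}_{\mathcal{M}^F}$ and $\check{\nu}_{E}=\check{\nu}_{\mathcal{M}^F}$. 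Substituting these identities into the dual chain $\mathcal{M}^{L^{1}_{\check{\omega}_{E}\otimes\check{\nu}_{E}}}\to E'\to\mathcal{M}^{L^{\infty}_{(1/\omega_{E})\otimes(1/\nu_{E})}}$ of Proposition~\ref{ktrisv157} turns it into $\mathcal{M}^{L^{1}_{\omega_{\mathcal{M}^F}\otimes\check{\nu}_{\mathcal{M}^F}}}\to\mathcal{M}^{F}\to\mathcal{M}^{L^{\infty}_{(1/\check{\omega}_{\mathcal{M}^F})\otimes(1/\nu_{\mathcal{M}^F})}}$, and the two weight comparisons already used in $(i)$ and $(iii)$ complete the argument.

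The conceptual content is thus minimal; the only steps demanding care are, first, keeping the reflections straight in the duality step of $(ii)$, and second, verifying that the comparison weights $\tilde{\omega}_{F}$ and $\tilde{\nu}_{F}$ are themselves admissible weight functions of class $*$, so that $L^{1}_{\tilde{\omega}_{F}\otimes\check{\tilde{\nu}}_{F}}$ and $L^{\infty}_{(1/\check{\tilde{\omega}}_{F})\otimes(1/\tilde{\nu}_{F})}$ are genuine TMIB and DTMIB to which $\mathcal{M}^{(\cdot)}$ applies. The latter follows from the growth bounds \eqref{omega} and the submultiplicativity of $\omega_{F},\nu_{F}$: indeed $\tilde{\omega}_{F}(x+y)=\omega_F(x+y,0)\nu_F(0,x+y)\leq \tilde{\omega}_{F}(x)\tilde{\omega}_{F}(y)$, and similarly for $\tilde{\nu}_{F}$, so that the tensor weights satisfy condition \eqref{trspkv157} and Remark~\ref{TMk2} applies. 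No new ideas are needed beyond those already in place.
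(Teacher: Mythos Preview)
Your proof is correct and follows the same approach the paper intends: apply Proposition~\ref{ktrisv157} to the space $E=\mathcal{M}^{F}$ and then use the weight inequalities $\tilde{\omega}_{F}\geq\omega_{\mathcal{M}^{F}}$, $\tilde{\nu}_{F}\geq\nu_{\mathcal{M}^{F}}$ together with the obvious monotonicity of $G\mapsto\mathcal{M}^{G}$ under continuous inclusions. The paper records exactly these ingredients just before the corollary and leaves the routine verification implicit; you have simply spelled it out, including the duality bookkeeping for~$(ii)$ and the admissibility of the tensor weights, all of which are handled correctly.
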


\section{More on modulation spaces constructed by TMIB tensor product spaces}\label{ex}
Theorem \ref{EE'} gives a way to construct instances of TMIB as completed tensor products of classical $L^p(\RR^d)$ spaces that do not have to coincide with the $L^p$-space over $\RR^{2d}$. As we noted in Remark \ref{TMrk4}, $L^2(\RR^d)\hat{\otimes}_{\pi} L^2(\RR^d)$ is a dense subspace of $L^2(\RR^{2d})$ which does not coincide with any $L^{p}(\mathbb{R}^{2d})$. The following proposition states an interesting property of the spaces $\mathcal{M}^{L^{p_1}(\RR^d)\hat{\otimes}_{\pi} L^{p_2}(\RR^d)}$, $1\leq p_2\leq p_1\leq 2$: They are all contained in $L^1(\RR^d)$.

\begin{proposition}\label{pex}
Let $1\leq p_2\leq p_1\leq 2$.
\begin{itemize}
\item[(i)] $V^*_g$ maps $L^{p_1}(\RR^d)\hat{\otimes}_{\pi} L^{p_2}(\RR^d)$ continuously into $L^1(\RR^d)$.
\item[(ii)] $\mathcal{M}^{L^{p_1}(\RR^d)\hat{\otimes}_{\pi} L^{p_2}(\RR^d)}$ is continuously included into $L^1(\RR^d)$.
\end{itemize}
\end{proposition}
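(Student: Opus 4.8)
The plan is to deduce $(ii)$ from $(i)$, and to prove $(i)$ by reducing it, through the universal property of the projective tensor product, to a single bilinear inequality that I will settle with the Hausdorff--Young and Young convolution inequalities.

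First I would dispose of $(ii)$ assuming $(i)$. Fix a window $g\in\SSS^*(\RR^d)\setminus\{0\}$ with $\|g\|_{L^2}=1$; by Corollary \ref{ktrspv157} the norm of $\mathcal{M}^F$ is, up to equivalence, independent of this choice. For $f\in\mathcal{M}^{F}$ with $F=L^{p_1}(\RR^d)\hat{\otimes}_\pi L^{p_2}(\RR^d)$, the inversion identity $V^*_gV_g=\|g\|_{L^2}^2\,\mathrm{Id}=\mathrm{Id}$ on $\SSS'^*(\RR^d)$ gives $f=V^*_g(V_gf)$ with $V_gf\in F$. Granting $(i)$, namely that $V^*_g\colon F\to L^1(\RR^d)$ is bounded, we get $f\in L^1(\RR^d)$ together with $\|f\|_{L^1}\le\|V^*_g\|_{F\to L^1}\|V_gf\|_F=\|V^*_g\|_{F\to L^1}\|f\|_{\mathcal{M}^F}$, which is exactly the continuous inclusion in $(ii)$.

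For $(i)$, the operator $V^*_g\colon\SSS'^*(\RR^{2d})\to\SSS'^*(\RR^d)$ is already continuous, so it restricts to $F\subseteq\SSS'^*(\RR^{2d})$ and the task is to show this restriction lands in $L^1$ boundedly. Since $\SSS^*(\RR^d)\otimes\SSS^*(\RR^d)$ is dense in $F$, it suffices, by the universal property of $\hat{\otimes}_\pi$, to produce a constant $C>0$ with $\|V^*_g(\Phi)\|_{L^1}\le C\,\|\varphi\|_{L^{p_1}}\|\psi\|_{L^{p_2}}$ on every elementary tensor $\Phi$; the resulting bounded linear extension $F\to L^1$ then coincides with $V^*_g$ on $F$, because both are continuous into $\SSS'^*(\RR^d)$ and agree on the dense subspace. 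A direct computation from the integral defining $V^*_g$ factorizes such an elementary tensor as the pointwise product $V^*_g(\Phi)=(\psi * g)\cdot(\mathcal{F}^{-1}\varphi)$, where $\varphi$ is the tensor factor on which $V^*_g$ performs the (partial, inverse) Fourier transform and $\psi$ the factor it convolves with $g$; for $F=L^{p_1}\hat{\otimes}_\pi L^{p_2}$ the former carries the exponent $p_1$ and the latter the exponent $p_2$, and aligning this correctly with the time--frequency structure of $V^*_g$ is the one bookkeeping point that must be checked.

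It then remains to bound $\|(\psi * g)\cdot\mathcal{F}^{-1}\varphi\|_{L^1}$. I would apply H\"older's inequality with the conjugate pair $(p_1,p_1')$, estimating $\|\mathcal{F}^{-1}\varphi\|_{L^{p_1'}}\le\|\varphi\|_{L^{p_1}}$ by Hausdorff--Young (legitimate since $p_1\le 2$), and $\|\psi * g\|_{L^{p_1}}\le\|\psi\|_{L^{p_2}}\|g\|_{L^{q}}$ by Young's inequality with $1/q=1+1/p_1-1/p_2$. This is where the hypothesis $p_2\le p_1$ is essential: it forces $1/q\le 1$, so $q\in[1,\infty]$ is an admissible convolution exponent, while $g\in\SSS^*(\RR^d)\subseteq\bigcap_{1\le q\le\infty}L^q(\RR^d)$ gives $\|g\|_{L^q}<\infty$. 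Multiplying the two estimates yields the bilinear bound with $C=\|g\|_{L^q}$, completing $(i)$. The hard part is precisely this exponent accounting rather than the analytic estimates themselves: the factor that gets Fourier-transformed must be the one with exponent $p_1\le 2$ (so Hausdorff--Young applies), and the factor smoothed by $g$ must carry the smaller exponent $p_2$ (so Young pushes it into $L^{p_1}$); with the roles interchanged the inequality genuinely fails, so matching $p_1$ and $p_2$ to the two operations is where care is required.
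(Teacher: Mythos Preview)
Your strategy is the paper's: reduce $(i)$ to a bilinear bound on elementary tensors (the paper phrases this via the absolutely convergent series representation of $L^{p_1}\hat\otimes_\pi L^{p_2}$, which is equivalent to your appeal to the universal property), then apply H\"older, Hausdorff--Young, and Young's convolution inequality; $(ii)$ follows from $(i)$ via $V^*_gV_g=\|g\|_{L^2}^2\,\mathrm{Id}$.

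There is, however, a genuine gap in exactly the ``bookkeeping point'' you flag. From the paper's formula $V^*_g\Phi(t)=\int_{\RR^{2d}}\Phi(x,\xi)\,g(t-x)\,e^{2\pi i\xi t}\,dx\,d\xi$, an elementary tensor $\Phi(x,\xi)=\alpha(x)\beta(\xi)$ gives $V^*_g\Phi=(\alpha*g)\cdot\mathcal{F}^{-1}\beta$. Thus the \emph{first} tensor factor---the $x$-variable, hence the $L^{p_1}$ factor in $L^{p_1}\hat\otimes_\pi L^{p_2}$---is the one convolved with $g$, while the \emph{second} ($\xi$-variable, the $L^{p_2}$ factor) is Fourier-transformed. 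You assert the opposite. With the correct assignment your chain requires $\alpha*g\in L^{p_2}$ from $\alpha\in L^{p_1}$, which Young's inequality cannot give when $p_2<p_1$; the argument closes only for $p_1=p_2$. This is not cosmetic: you yourself observe that ``with the roles interchanged the inequality genuinely fails,'' and the stated hypothesis $p_2\le p_1$ produces precisely the interchanged roles. (The paper's own displayed estimate has the same swap---its final bound reads $\|\psi_j\|_{L^{p_1}}\|\varphi_j\|_{L^{p_2}}\|g\|_{L^r}$ while the series representation controls $\varphi_j$ in $L^{p_1}$ and $\psi_j$ in $L^{p_2}$---so the argument as printed is clean only when $p_1=p_2$, which is the case actually used in the ensuing corollaries.)
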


\begin{proof} Notice that (ii) directly follows from (i). To prove (i) recall that every $f\in L^{p_1}(\RR^d)\hat{\otimes}_{\pi}L^{p_2}(\RR^d)$ is the sum of an absolutely convergent series
\[
f=\sum_{j=0}^\infty \lambda_j\varphi_j\otimes\psi_j,
\]
where $\lambda_j\in\CC$, $j\in\NN$, is such that $\sum_{j=0}^\infty |\lambda_j|<\infty$ and $\varphi_j,\psi_j\in\SSS(\RR^d)$, $j\in\NN$, are such that $\varphi_j\rightarrow0$ and $\psi_j\rightarrow0$ in $L^{p_1}(\RR^d)$ and $L^{p_2}(\RR^d)$, respectively (see \cite[Theorem 6.4, p. 94]{Sch}; $\SSS(\RR^d)$ is dense in $L^{p_k}(\RR^d)$, $k=1,2$). Hence, the series converges absolutely in $\SSS'(\RR^{2d})$ as well and we have $V^*_gf=\sum_j \lambda_jV^*_g(\varphi_j\otimes \psi_j)$. Notice that
\begin{equation}
\label{formulaV*tensors}
V^*_g(\varphi_j\otimes\psi_j)(t)=(\mathcal{F}^{-1}\psi_j)(t) \cdot (\varphi_j*g)(t).
\end{equation}
Let $q_1$ be the conjugate index of $p_1$ (i.e., $p_1^{-1}+q_1^{-1}=1$).  Let  $r =p_1p_2/(p_1p_2+p_2-p_1)$, that is $1+1/p_1=1/r+1/p_2$. Clearly $r\geq 1$ and the Hausdorff-Young and Young inequalities imply
\beqs
\|V^*_g(\varphi_j\otimes\psi_j)\|_{L^1}\leq \|\mathcal{F}^{-1}\psi_j\|_{L^{q_1}}\|\varphi_j*g\|_{L^{p_1}}\leq \|\psi_j\|_{L^{p_1}}\|\varphi_j\|_{L^{p_2}}\|g\|_{L^r}.
\eeqs
We conclude that $\sum_j \lambda_jV^*_g(\varphi_j\otimes \psi_j)$ converges absolutely in $L^1(\RR^d)$ and thus $V^*_gf\in L^1(\RR^d)$, namely, the image of $L^{p_1}(\RR^d)\hat{\otimes}_{\pi} L^{p_2}(\RR^d)$ under $V^*_g$ is contained in $L^1(\RR^d)$. Since the mapping $V^*_g:L^{p_1}(\RR^d)\hat{\otimes}_{\pi} L^{p_2}(\RR^d)\rightarrow \SSS'(\RR^d)$ is continuous,
the closed graph theorem implies that $V^*_g$ is a continuous mapping from $L^{p_1}(\RR^d)\hat{\otimes}_{\pi} L^{p_2}(\RR^d)$ into $L^1(\RR^d)$ and the proof of (i) is complete.
\end{proof}

As consequences of this proposition, we have the following results concerning the relationship between $\mathcal{M}^{L^{p_1}(\RR^d)\hat{\otimes}_{\pi} L^{p_2}(\RR^d)}$ and the classical modulation spaces $\mathcal{M}^{L^{p_1,p_2}(\RR^{2d})}$. We note that $\mathcal{M}^{L^{p}(\mathbb{R}^{2d})}=M^p(\mathbb{R}^{d})$ coincides with the space $W(\mathcal{F}L^{p}, l^{p})$ from \cite{F90}.

\begin{corollary}
Let $1\leq p_2\leq p_1\leq 2$ and $1< p\leq \infty$ be arbitrary. The spaces $\mathcal{M}^{L^{p_1}(\RR^d)\hat{\otimes}_{\pi}L^{p_2}(\RR^d)}$ and $\mathcal{M}^{L^{p}(\RR^{2d})}$ are different as sets.
\end{corollary}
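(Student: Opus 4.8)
The plan is to exhibit, for each admissible pair of indices, a single explicit test function that distinguishes the two modulation spaces, exploiting the containment established in Proposition \ref{pex}. The key structural fact is that $\mathcal{M}^{L^{p_1}(\RR^d)\hat{\otimes}_{\pi}L^{p_2}(\RR^d)}\hookrightarrow L^1(\RR^d)$ by Proposition \ref{pex}$(ii)$, whereas the classical modulation space $\mathcal{M}^{L^{p',p'}(\RR^{2d})}=M^{p'}(\RR^d)$ for $2\leq p'\leq\infty$ is a genuinely larger space containing non-integrable functions. So the strategy is to produce an element of $\mathcal{M}^{L^{p'}(\RR^{2d})}$ that fails to be in $L^1(\RR^d)$, which immediately shows it cannot lie in $\mathcal{M}^{L^{p_1}(\RR^d)\hat{\otimes}_{\pi}L^{p_2}(\RR^d)}$.

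First I would recall (or verify via the STFT characterization) that for $p'\geq 2$ the space $\mathcal{M}^{L^{p'}(\RR^{2d})}=M^{p'}(\RR^d)$ contains tempered distributions whose $L^{p'}$-integrability of the STFT is a genuine relaxation of $L^1$-membership; concretely, one knows $L^2(\RR^d)=M^2(\RR^d)\subseteq M^{p'}(\RR^d)$ for $p'\geq 2$ by the monotonicity of modulation spaces in the Lebesgue exponent. Hence it suffices to pick $f\in L^2(\RR^d)\setminus L^1(\RR^d)$ (such functions manifestly exist, e.g.\ a function decaying like $(1+|x|)^{-d/2-\varepsilon}$ with $d/2+\varepsilon\leq d$). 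Then $f\in M^2(\RR^d)\subseteq M^{p'}(\RR^d)=\mathcal{M}^{L^{p'}(\RR^{2d})}$, yet $f\notin L^1(\RR^d)$, so $f\notin \mathcal{M}^{L^{p_1}(\RR^d)\hat{\otimes}_{\pi}L^{p_2}(\RR^d)}$ by Proposition \ref{pex}$(ii)$. This produces an element in the second space but not the first, establishing that they are different as sets.

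The one point requiring care is the endpoint and the precise inclusion $M^2\subseteq M^{p'}$: I would justify it through the STFT definition, noting that with a fixed window $g$ the finiteness of $\|V_gf\|_{L^2(\RR^{2d})}$ and the rapid decay of $V_g g$ give, via H\"older on the convolution structure of the STFT reproducing kernel, the finiteness of $\|V_gf\|_{L^{p'}(\RR^{2d})}$ for every $p'\geq 2$; this is exactly the content of the embedding $\mathcal{M}^{L^2(\RR^{2d})}\hookrightarrow \mathcal{M}^{L^{p'}(\RR^{2d})}$ for $p'\geq 2$, which follows from the pointwise bound $|V_gf|\leq C\,|V_gf|\ast|V_gg|$ together with Young's inequality, or simply from the nesting of the spaces $L^{p'}(\RR^{2d})$ on sets where $V_gf$ is bounded. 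The main obstacle, such as it is, lies precisely in confirming that $L^2\setminus L^1$ functions genuinely survive in $M^{p'}$ for all $p'$ in the stated range simultaneously and in the endpoint $p'=\infty$; I expect this to be routine since $L^2(\RR^d)=M^2(\RR^d)$ is the canonical Hilbert-space modulation space and the inclusions into wider modulation spaces are standard.

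Finally, I would remark that the same witness $f$ works uniformly across the whole family $1\leq p_2\leq p_1\leq 2$ and $2\leq p'\leq\infty$, since the obstruction is entirely on the side of $L^1$-integrability furnished by Proposition \ref{pex} and does not depend on the individual values of $p_1,p_2$. Thus a single choice of $f\in L^2(\RR^d)\setminus L^1(\RR^d)$ settles the corollary in one stroke.
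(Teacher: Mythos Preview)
Your proposal is correct and follows essentially the same route as the paper: both arguments use Proposition~\ref{pex}$(ii)$ to reduce the question to exhibiting an $f\in\mathcal{M}^{L^{p'}(\RR^{2d})}\setminus L^1(\RR^d)$, and both take $f\in L^2(\RR^d)\setminus L^1(\RR^d)$ as the witness. The only cosmetic difference is that the paper additionally requires $f\in L^{\infty}(\RR^d)$ and then bounds $\|V_gf\|_{L^{\infty}}\leq\|f\|_{L^{\infty}}\|g\|_{L^{1}}$ directly, so that $V_gf\in L^2\cap L^{\infty}\subseteq L^{p'}$ by interpolation; you instead invoke the abstract embedding $M^2\hookrightarrow M^{p'}$ for $p'\geq 2$, which is equally valid (and in fact your hypothesis $f\in L^2$ alone already gives $\|V_gf\|_{L^{\infty}}\leq\|f\|_{L^2}\|g\|_{L^2}$ by Cauchy--Schwarz, so the extra $L^{\infty}$ assumption in the paper is not even needed).
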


\begin{proof} We use a window $g\in\SSS(\RR^d)\backslash \{0\}$.
By Proposition \ref{pex}, it suffices to
find $f\not\in L^1(\RR^d)$ such that
$V_g f\in L^p(\RR^{2d})$ for any $1<p\leq \infty$. For this purpose let
$f_1\in C^{\infty}(\RR)$ be nonnegative with support in $[1,\infty)$
and equal to $1/t$ on $[2,\infty)$. When $d=1$ we define $f$ to be just $f_1$
and when $d\geq 2$ let $\theta\in \mathcal{D}(\RR^{d-1})$ be nonnegative and define
$f=f_1\otimes \theta$. Clearly, $f\in C^{\infty}(\RR^d)\backslash L^1(\RR^d)$.
Since
$(\mathrm{Id}-\Delta_t)^de^{-2\pi i \xi t}=(1+4\pi^2|\xi|^2)^de^{-2\pi i\xi t}$,
we have
\begin{align*}
|V_g f(x,\xi)|&\leq (1+4\pi^2|\xi|^2)^{-d} \sum_{|\beta+\gamma|\leq 2d}
c_{\beta,\gamma} \int_{\RR^d} |D^{\gamma}f(t)| |D^{\beta}g(t-x)| dt\\
&=
(1+4\pi^2|\xi|^2)^{-d} \sum_{|\beta+\gamma|\leq 2d}
c_{\beta,\gamma} |D^{\gamma}f|*|D^{\beta}\check{g}|(x).
\end{align*}
Let $p_3,p_4>1$ be such that $p_3^{-1}+p_4^{-1}=1+p^{-1}$ (recall $p>1$).
By construction, $D^{\alpha} f\in L^{p_3}(\RR^d)$, $\forall \alpha\in\NN^d$.
Hence
\beqs
\|V_g f\|_{L^p(\RR^{2d})}\leq C\sum_{|\beta+\gamma|\leq 2d}
\| |D^{\gamma}f|*|D^{\beta}\check{g}|(\cdot)\|_{L^p(\RR^d)}\leq
C\sum_{|\beta+\gamma|\leq 2d}\|D^{\gamma}f\|_{L^{p_3}(\RR^d)}
\|D^{\beta}g\|_{L^{p_4}(\RR^d)},
\eeqs
i.e. $V_gf\in L^p(\RR^{2d})$.
\end{proof}

Since $L^2(\RR^d)\hat{\otimes}_{\pi} L^2(\RR^d)$ is continuously included into $L^2(\RR^{2d})$, the modulation space $\mathcal{M}^{L^2(\RR^d)\hat{\otimes}_{\pi} L^2(\RR^d)}$ is continuously and densely included into $L^2(\RR^{d})$ $(=\mathcal{M}^{L^2(\RR^{2d})}$). We therefore have,

\begin{corollary}\label{vtskle157}
The space $\mathcal{M}^{L^2(\RR^d)\hat{\otimes}_{\pi} L^2(\RR^d)}$ is a proper subspace of $L^{2}(\mathbb{R}^{d})$ with strictly finer topology.
\end{corollary}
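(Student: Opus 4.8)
The plan is to establish the two parts of the claim separately: first that $\mathcal{M}^{L^2(\RR^d)\hat{\otimes}_{\pi} L^2(\RR^d)}$ is a \emph{proper} subspace of $\mathcal{M}^{L^2(\RR^{2d})}=L^2(\RR^d)$ as a set, and second that the inclusion is strict at the level of topologies. For the set-theoretic properness, I would invoke Proposition \ref{pex}$(ii)$, which gives the continuous inclusion $\mathcal{M}^{L^2(\RR^d)\hat{\otimes}_{\pi} L^2(\RR^d)}\hookrightarrow L^1(\RR^d)$ (taking $p_1=p_2=2$, so $1\leq p_2\leq p_1\leq 2$ is satisfied). On the other hand, $\mathcal{M}^{L^2(\RR^{2d})}=L^2(\RR^d)$ by the standard identification already recalled in the introduction. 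Thus any element of $\mathcal{M}^{L^2(\RR^d)\hat{\otimes}_{\pi} L^2(\RR^d)}$ lies in $L^1(\RR^d)\cap L^2(\RR^d)$, whereas $L^2(\RR^d)$ contains functions outside $L^1(\RR^d)$. Hence the inclusion $\mathcal{M}^{L^2(\RR^d)\hat{\otimes}_{\pi} L^2(\RR^d)}\subseteq \mathcal{M}^{L^2(\RR^{2d})}$ is proper.

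To exhibit an explicit separating element, I would take any $f\in L^2(\RR^d)\setminus L^1(\RR^d)$ (for instance $f(x)=(1+|x|)^{-s}$ with $d/2<s\leq d$ in dimension $d$). Then $f\in\mathcal{M}^{L^2(\RR^{2d})}$ since $V_gf\in L^2(\RR^{2d})$ for $g\in\SSS(\RR^d)$, but $f\notin L^1(\RR^d)$, so by the inclusion just established $f$ cannot belong to $\mathcal{M}^{L^2(\RR^d)\hat{\otimes}_{\pi} L^2(\RR^d)}$. This simultaneously witnesses both that the subspace is proper and that a bona fide element of the larger space is excluded.

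For the statement about the topology, I would argue that the inclusion map is continuous (as already remarked just before the corollary, since $L^2(\RR^d)\hat{\otimes}_{\pi} L^2(\RR^d)$ is continuously included into $L^2(\RR^{2d})$, which by functoriality of $\mathcal{M}^{(\cdot)}$ transfers to the modulation spaces) but that the two norms are not equivalent. Equivalence of norms on $\mathcal{M}^{L^2(\RR^d)\hat{\otimes}_{\pi} L^2(\RR^d)}$ with the $\mathcal{M}^{L^2(\RR^{2d})}$-norm would force the two spaces to coincide as Banach spaces, because $\mathcal{M}^{L^2(\RR^d)\hat{\otimes}_{\pi} L^2(\RR^d)}$ is already known (Corollary \ref{iso}) to be a Banach space and its inclusion into $\mathcal{M}^{L^2(\RR^{2d})}$ is dense; a dense subspace that is closed (which norm equivalence would entail) must be the whole space. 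Since we have shown set-theoretic properness, the topologies must differ, and hence the topology of $\mathcal{M}^{L^2(\RR^d)\hat{\otimes}_{\pi} L^2(\RR^d)}$ is \emph{strictly} finer.

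The main subtlety is keeping the logical distinction between set-theoretic properness and topological strictness clean: the former rests on Proposition \ref{pex} together with the identification $\mathcal{M}^{L^2(\RR^{2d})}=L^2(\RR^d)$, while the latter is a soft functional-analytic consequence (a continuously and densely included proper Banach subspace cannot carry an equivalent norm, else it would be closed and hence everything). I expect the genuinely substantive input to be Proposition \ref{pex}$(i)$, whose proof uses the Hausdorff--Young and Young inequalities through the $\pi$-tensor series representation; everything after that is bookkeeping.
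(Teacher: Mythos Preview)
Your proposal is correct and follows essentially the same approach as the paper: the paper derives properness from the preceding corollary (which, just as you do, uses Proposition~\ref{pex} to place $\mathcal{M}^{L^2\hat{\otimes}_{\pi} L^2}$ inside $L^1(\RR^d)$ and then exhibits an element of $L^2(\RR^d)\setminus L^1(\RR^d)$), and the strict fineness of the topology is obtained from the continuous dense inclusion noted just before the corollary, exactly via the closed-subspace argument you spell out.
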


\bigskip

\subsection*{Acknowledgement}
We thank Hans Feichtinger for useful comments and bringing our attention to many important bibliographical references.

\end{document}